\documentclass{amsart}
\usepackage[utf8]{inputenc}

\usepackage{amsmath, amssymb, amsfonts, amsthm}
\usepackage{enumerate}
\usepackage{bbold}
\usepackage{color}
\usepackage{xcolor}
\usepackage{esvect}
\usepackage{mathtools}
\usepackage{graphicx}
\usepackage{longtable}
\usepackage{fullpage}
\usepackage{rotating}
\usepackage{tikz}
\usepackage{tikz-cd}
\usepackage[boxsize = .3em]{ytableau}
\theoremstyle{plain}
\newtheorem{thm}{Theorem}[section]
\newtheorem{lem}[thm]{Lemma}
\newtheorem{prop}[thm]{Proposition}
\newtheorem{cor}[thm]{Corollary}
\newcommand{\att}[2]{\raisebox{-.5\height}{ \includegraphics[scale = #2]{diagrams/#1.pdf}}}
\parskip = .5em

\theoremstyle{definition}
\newtheorem{defn}[thm]{Definition}
\newtheorem{ex}[thm]{Example}
\newtheorem{rmk}[thm]{Remark}

\newcommand{\bbZ}{\mathbb{Z}}

\newtheoremstyle{named}{}{}{\itshape}{}{\bfseries}{.}{.5em}{#1 \thmnote{(#3) }}
\theoremstyle{named}

\title{Type $\textrm{II}$ quantum subgroups for quantum $\mathfrak{sl}_N$. $\textrm{II}$: Classification}
\author{Cain Edie-Michell and Terry Gannon}
\date{}

\begin{document}

\maketitle
\begin{abstract}
In this paper we study the indecomposable module categories over $\mathcal{C}(\mathfrak{sl}_N, k)$, the category of integrable level-$k$ respresentations of affine Kac-Moody $\mathfrak{sl}_N$. Our first main result classifies these module categories in the case of generic $k$, i.e. $k$ is sufficiently large relative to $N$. As $\mathcal{C}(\mathfrak{sl}_N, k)$ is a braided tensor category, there is a relative tensor product structure on its category of module categories. In the generic setting we obtain a formula for the relative tensor product rules between the indecomposable module categories. Our second main result classifies the indecomposable module categories over $\mathcal{C}(\mathfrak{sl}_N, k)$ for $N\leq 7$, with no restrictions on $k$. In this non-generic setting, exceptional module categories are obtained. This work relies heavily on previous results by the two authors. In previous literature, module category classification results were known only for $\mathfrak{sl}_2$ and $\mathfrak{sl}_3$.
\end{abstract}

\section{Introduction}

Given a tensor category $\mathcal{C}$, it is an interesting question to study its representation theory. More precisely, we want to study $\operatorname{Mod}(\mathcal{C})$, the module categories over $\mathcal{C}$ in the sense of \cite{OstMod}. This is a difficult problem in general, and there has been significant literature dedicated to this problem e.g. \cite{Kril,ETMod,HaagMod,Emily,EH,Gannon,dan}.

Beyond being an interesting algebraic question in its own right, the representation theory of a tensor category $\mathcal{C}$ has several interesting applications. In the case that $\mathcal{C}$ is the category of bifinite bimodules over a type $\textrm{II}_1$ factor $M$, then a $\mathcal{C}$-module category (along with a choice of module object) can be used to construct a subfactor $M\subset N$ \cite{LongSub}. If $\mathcal{C}$ is braided, then $\operatorname{Mod}( \mathcal{C})$ has the structure of a fusion 2-category \cite{fus2}. It is shown by D\'ecoppet that every fusion 2-category is Morita equivalent to $\operatorname{Mod}( \mathcal{C})$ for some braided tensor category $\mathcal{C}$ \cite[Theorem 4.2.2]{thibo}. Hence the categories $\operatorname{Mod}( \mathcal{C})$ are key examples in the theory of fusion 2-categories. Also, a rational conformal field theory can be identified with chiral data such as a vertex operator algebra, together with a module category of the associated modular tensor category \cite{CFTmod}; the module category describes the boundary data and captures how the two chiral halves splice together.

A large class of non-degenerate braided fusion categories come from the representation theory of the affine Lie algebras. Given a simple Lie algebra $\mathfrak{g}$, and a positive integer level $k$, then the category of integrable level-$k$ representations of $\widehat{\mathfrak{g}}$ can be equipped with a tensor product which makes it non-degenerately braided \cite{ten1,KacBook}. These categories are typically denoted $\mathcal{C}(\mathfrak{g}, k)$. Alternatively, the categories $\mathcal{C}(\mathfrak{g}, k)$ can be realised as semisimplifications of the category of tilting modules $\operatorname{Rep}(U_q(\mathfrak{g}))$, where $q$ is a root of unity depending on $k$. Details of this category can be found in \cite{QG1,QG2}, and details of the equivalence can be found in \cite{KL1,KL3,KL4,MR1384612}. These categories have physical relevance, as they are the representation categories of the Wess-Zumino-Witten chiral conformal field theories $\mathcal{V}(\mathfrak{g}, k)$ \cite{WZW,hua}.

The problem of determining $\operatorname{Mod}(\mathcal{C}(\mathfrak{g}, k))$ was initially studied in the physics literature \cite{Wit,ADE}, where the case $\mathfrak{sl}_2$ was considered. Here an $ADE$ classification was obtained. A fully rigorous mathematical treatment of the $\mathfrak{sl}_2$ case was obtained in \cite{OstMod}. This classification result was particularly striking, as it contained a mix of infinite families of module categories, along with three sporadic examples at $k\in \{10,16,28\}$. In \cite{GanSU3,Ocneanu,SU3} the case of $\mathfrak{g} = \mathfrak{sl}_3$ was considered. Again an interesting mix of infinite families, along with sporadic examples at $k \in \{5,9,21\}$ was found. Before this paper, results were not known for larger $\mathfrak{sl}_N$. We remark that the $\mathfrak{sl}_N$ cases can be considered a form of ``higher rank $ADE$''.

The problem of determining the objects of $\operatorname{Mod}(\mathcal{C}(\mathfrak{g}, k))$ (or more generally, the irreducible module categories over any non-degenerately braided tensor category) can be naturally broken up into two pieces. This is due to the results of \cite{triples}, which classify such modules in terms of \textit{\'etale} algebra objects in $\mathcal{C}(\mathfrak{g}, k)$, and braided equivalences between their categories of local modules. 

Recent progress has been made towards the classification of \textit{\'etale} algebra objects in $\mathcal{C}(\mathfrak{g}, k)$. The first groundbreaking result was obtained by Schopieray in \cite{LevelAndy}, where for the rank 2 simple Lie algebras, a bound $N(\mathfrak{g})$ was obtained such that for $k > N(\mathfrak{g})$, the only \textit{\'etale} algebra objects in $\mathcal{C}(\mathfrak{g}, k)$ are pointed (and hence fully understood). This was built on by the second author in \cite{Gannon}, where a bound was obtained for all simple Lie algebras. Furthermore, this bound was effective in that $N(\mathfrak{g})$ grows approximately like the rank of $\mathfrak{g}$ cubed. This effective bound allowed the second author to classify all \textit{\'etale} algebra objects in $\mathcal{C}(\mathfrak{g}, k)$ for many of the low rank examples \cite{newGan}.

The complimentary problem of studying the braided equivalences between the categories of local modules was studied by the first author in the prequel to this paper \cite{ModPt1}. In the case that the \textit{\'etale} algebra is non-pointed, the category of local modules tend to be very simple, and the desired equivalences can be deduced via elementary means. However in the case where the algebra is pointed, the category of local modules is much more complicated. The main contribution of the prequel was to completely classify the braided autoequivalences of these categories of local modules. The surprising result was that for 
\[(N,k) \notin \{(2,16),(3,9),(4,8),(5,5),(8,4),(9,3),(16,2)\},\] there are no exceptional braided autoequivalences. This work built on the first author's work \cite{autos}, which classified the braided autoequivalences in the case of the trivial \textit{\'etale} algebra $\mathbf{1}$.

The purpose of this paper is to combine the results of the authors in order to apply the general classification theory of \cite{triples} to give classification results for $\operatorname{Mod}(\mathcal{C}(\mathfrak{sl}_N, k))$. It should be noted that the methods of \cite{triples} are non-constructive in our setting, and so our classification results are abstract. The complimentary program of explicitly constructing $\mathcal{C}(\mathfrak{sl}_N, k)$ module categories is in progress, with results found in \cite{ HansMod, HansSp,dan}.

Our main result classifies the indecomposable module categories over $\mathcal{C}(\mathfrak{sl}_N, k)$ in the case where all \textit{\'etale} algebra objects are pointed. We also determine decomposition formulae for the relative tensor products between these module categories. In the language of fusion 2-categories, we determine the simple objects of $\operatorname{Mod}(\mathcal{C}\left(\mathfrak{sl}_N, k  \right))$, and the $\boxtimes$-product rules for these simples. From the results of the second author \cite{Gannon}, there are only finitely many $k$ for each $N$ where we have non-pointed \textit{\'etale} algebra objects in $\mathcal{C}(\mathfrak{sl}_N, k)$. Hence we consider this classification theorem as the generic result on $\operatorname{Mod}(\mathcal{C}(\mathfrak{sl}_N, k))$.

    


\begin{thm}\label{thm:bigmain}
    Let $N,k\in \mathbb{N}_{\geq 4}$ be such that the only connected \textit{\'etale} algebra objects in $\mathcal{C}(\mathfrak{sl}_N, k)$ are pointed. Then there is a bijection
    \[  (d, \varepsilon) \mapsto \mathcal{M}_{d,\varepsilon}        \]
    between pairs $(d,\varepsilon)$ where $\varepsilon\in \{+, -\}$, and $d$ is a divisor of $\frac{N}{2}$ if $N$ is even and $k$ is odd, or a divisor of $N$ otherwise, and indecomposable module categories $\mathcal{M}$ of $\mathcal{C}(\mathfrak{sl}_N, k)$ up to equivalence.

    The decomposition of the relative tensor products between these indecomposable module categories is given by the following rule:
        \[  \mathcal{M}_{d_1,\varepsilon_1} \boxtimes_{\mathcal{C}(\mathfrak{sl}_N, k)} \mathcal{M}_{d_2,\varepsilon_2}   \simeq \mathcal{M}_{d_2,\varepsilon_2} \boxtimes_{\mathcal{C}(\mathfrak{sl}_N, k)} \mathcal{M}_{d_1,\varepsilon_1} \simeq \mathcal{M}_{d,\varepsilon_1\varepsilon_2} ^{\boxplus \gcd(m_{d_1}, m_{d_2}  )}    \]
        where $d = d_{\operatorname{lcm}  (m_{d_1}, m_{d_2}  ),  \vv{a}(d_1)  \times \vv{a}(d_2)      }$.
\end{thm}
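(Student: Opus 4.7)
The plan is to combine three inputs: the triples classification of Davydov--M\"uger--Nikshych--Ostrik \cite{triples}, the classification of pointed \emph{\'etale} algebras in $\mathcal{C}(\mathfrak{sl}_N,k)$, and the classification of braided autoequivalences of their categories of local modules from the prequel \cite{ModPt1}. Since $\mathcal{C}(\mathfrak{sl}_N,k)$ is non-degenerately braided, \cite{triples} identifies indecomposable module categories with equivalence classes of triples $(A_1,A_2,\phi)$, where $A_1,A_2$ are connected \emph{\'etale} algebras and $\phi\colon\mathcal{C}(\mathfrak{sl}_N,k)_{A_1}^{\mathrm{loc}}\xrightarrow{\sim}\overline{\mathcal{C}(\mathfrak{sl}_N,k)_{A_2}^{\mathrm{loc}}}$ is a braided equivalence, with two triples equivalent when they are related by an invertible bimodule. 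The hypothesis forces every $A_i$ to be pointed, reducing the problem to a combinatorial one in the group of invertible objects.

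The first step is to enumerate pointed connected \emph{\'etale} algebras. The group of invertible objects of $\mathcal{C}(\mathfrak{sl}_N,k)$ is cyclic of order $N$, equipped with the quadratic form coming from the ribbon twist. Direct computation of this form shows that a subgroup of order $d$ supports an \emph{\'etale} structure precisely when the twist vanishes on it, which gives the divisibility conditions stated: $d\mid N$ in general, sharpening to $d\mid N/2$ in the single parity case. For each admissible $d$, vanishing of the relevant $H^3$ obstruction yields a single pointed \emph{\'etale} algebra $A_d$ up to isomorphism, parametrized internally by the invariants $m_d$ and $\vv{a}(d)$ from \cite{ModPt1}.

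Next, for each admissible pair $(d_1,d_2)$, I would enumerate braided equivalences $\phi\colon\mathcal{C}(\mathfrak{sl}_N,k)_{A_{d_1}}^{\mathrm{loc}}\simeq\overline{\mathcal{C}(\mathfrak{sl}_N,k)_{A_{d_2}}^{\mathrm{loc}}}$ modulo the triple equivalence relation. This is exactly where \cite{ModPt1} is applied: outside the exceptional list of $(N,k)$ excluded from the introduction, the braided autoequivalence 2-group of each $\mathcal{C}_{A_d}^{\mathrm{loc}}$ was shown to contain no exceptional elements, so the only equivalences available are the obvious ones from the pointed subalgebra structure together with charge conjugation. A cross-equivalence between $\mathcal{C}_{A_{d_1}}^{\mathrm{loc}}$ and $\mathcal{C}_{A_{d_2}}^{\mathrm{loc}}$ with $d_1\neq d_2$ would have to intertwine the grading groups, which the non-exceptional result rules out. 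After quotienting by the action of invertible $(A_d,A_d)$-bimodules, the residual data per $d$ is exactly the binary choice recorded by $\varepsilon$, producing the claimed bijection.

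The final step is the tensor product formula. Since module categories coming from algebras $A,B$ satisfy $\mathcal{M}_A\boxtimes_{\mathcal{C}}\mathcal{M}_B\simeq\mathcal{M}_{A\otimes B}$, one must decompose $A_{d_1}\otimes A_{d_2}$ as a sum of pointed connected \emph{\'etale} algebras. Computing this inside the cyclic group $\mathbb{Z}/N$ of invertible objects, the underlying subgroup of the new algebra is generated by those of $A_{d_1}$ and $A_{d_2}$, which explains the appearance of $\operatorname{lcm}(m_{d_1},m_{d_2})$, while the number of simple summands matches the intersection, explaining the $\gcd(m_{d_1},m_{d_2})$ multiplicity; the cocycle data multiplies according to $\vv{a}(d_1)\times\vv{a}(d_2)$, and the sign $\varepsilon_1\varepsilon_2$ tracks compatibility of the charge-conjugation choices. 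The main obstacle here, and in the proof overall, is precisely this last step: one must verify that the book-keeping of the pointed data $(m_d,\vv{a}(d))$ introduced in \cite{ModPt1} is multiplicative under $\otimes$, that the sign is multiplicative, and that no hidden equivalences collapse summands; everything else is assembly of existing results.
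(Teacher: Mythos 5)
Your overall framework is the right one (the triples classification from \cite{triples}, the classification of pointed \emph{\'etale} algebras, and the braided autoequivalence results from \cite{ModPt1}), but two of the three main steps have genuine gaps.

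First, your combinatorial accounting of the bijection is wrong. You treat each divisor $d$ of $N$ (or $N/2$) as indexing a pointed \emph{\'etale} algebra, and then assert that ``the residual data per $d$ is exactly the binary choice recorded by $\varepsilon$.'' But the pointed \emph{\'etale} algebras are indexed only by the smaller set of divisors $m$ with $m^2\mid Nk$ (or $2m^2\mid Nk$); the divisors $d$ index module categories, not algebras. For each algebra $A_m$ the number of double cosets of $\operatorname{EqBr}(\mathcal{C}^0_{A_m})$ with respect to $\operatorname{Ind}(\operatorname{Aut}(A_m))$ is $2^{1+p_m+t_m}$, which is in general much larger than $2$, and the fact that $\sum_m 2^{1+p_m+t_m}=2\sigma(N)$ is a nontrivial number-theoretic identity that the paper proves as a standalone lemma. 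You also need the explicit bijection between divisors $d$ and pairs $(m,\vv{a})$ with $m_d=m$ and $\vv{a}=\vv{a}(d)$ to define the parametrization $\mathcal{M}_{d,\varepsilon}$ at all. Also worth noting: the claim that cross-equivalences $\mathcal{C}^0_{A_{m_1}}\simeq\mathcal{C}^0_{A_{m_2}}$ cannot exist for $m_1\neq m_2$ is established in the paper by comparing global dimensions, not by an appeal to gradings.

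Second, and more seriously, your proposal for the tensor product formula does not go through. You invoke $\mathcal{M}_A\boxtimes_\mathcal{C}\mathcal{M}_B\simeq\mathcal{M}_{A\otimes B}$ and propose to decompose $A_{d_1}\otimes A_{d_2}$, but the module categories $\mathcal{M}_{d,-}$ are not of the form $\mathcal{C}_A$ for any algebra $A$; they correspond to triples $(A_m,\mathcal{F},A_m)$ with $\mathcal{F}$ non-trivial, so there is no algebra to tensor. Even in the $\varepsilon=+$ case, the multiplicity $\gcd(m_{d_1},m_{d_2})$ and the precise law $d=d_{\operatorname{lcm}(m_{d_1},m_{d_2}),\vv{a}(d_1)\times\vv{a}(d_2)}$ do not drop out of a naive decomposition of $A_{d_1}\otimes A_{d_2}$; the pointed cyclic algebras $A_d$ with larger $d$ are not \emph{\'etale} and the tensor of two of them is not a direct sum of connected \emph{\'etale} algebras in any obvious sense. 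The paper's actual mechanism is entirely different: it computes closed-form modular invariant matrices $Z(d,\pm)$ for each $\mathcal{M}_{d,\pm}$, proves that these matrices are $\mathbb{Z}$-linearly independent (which requires a separate technical lemma producing simple objects with trivial $D_N$-stabilizer in each grading component), and then exploits the fact that $[Z(\mathcal{M}_1\boxtimes_\mathcal{C}\mathcal{M}_2)]=[Z(\mathcal{M}_1)]\cdot[Z(\mathcal{M}_2)]$. The identities $Z(1,-)^2=Z(1,+)$, $Z(1,-)Z(d,+)=Z(d,+)Z(1,-)$ (using $Z(1,-)=S^2$), and $Z(d',+)Z(d'',+)=r\,Z(d,+)$ are then established by direct matrix computation using an explicit formula for $Z(d,+)_{\lambda,\mu}$. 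This is the technical heart of the theorem, and your proposal correctly identifies it as ``the main obstacle'' but does not supply a working strategy for it.

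You should make explicit the combinatorial lemma showing $\sum_m 2^{1+p_m+t_m}=2\sigma(N)$ (resp.\ $2\sigma(N/2)$), construct the bijection $d\leftrightarrow(m,\vv{a})$, and replace the $A\otimes B$ argument with the modular-invariant approach or find an actual proof that the naive tensor decomposition works in all cases including $\varepsilon=-$.
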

The definitions of the quantities $d_{m, \vv{a}}$, $m_d$, and $\vv{a}(d)$ can be found in Definition~\ref{def:ll} and Remark~\ref{rmk:bij}.

\begin{rmk}
    The above Theorem actually holds more generally than just for $N,k \geq 4$, with this restriction simply chosen to provide the cleanest possible statement. The bijection between pairs $(d,\varepsilon)$ and indecomposable module caregories holds for all $N,k \geq 3$ satisfying the \textit{\'etale} algebra condition. For either $N=2$ or $k=2$ with $(N,k) \notin \{(2,16), (16,2)\}$ the bijection holds if we restrict to $\varepsilon = +$. For $N=2$ and $k=2$ the bijection holds if we restrict to $d=1$ and $\varepsilon = +$. The $N=2$ version of the classification theorem is known from previous results in the literature \cite{ADE, OstMod}. The relative tensor product rules hold for all values except for $(N,k) \in \{(3,3),(3,6),(6,3)\}$. We expect that the formula is still true in these three cases. It is also true that the relative tensor product rules between the module categories $\mathcal{M}_{d,\varepsilon}$ hold even when the category has non-pointed \textit{\'etale} algebra objects (that is, also in the cases where there are exceptional module categories beyond the $\mathcal{M}_{d,\varepsilon}$) in all cases apart from $(N,k) \in \{(3,3),(3,6),(4,4), (5,5),(6,3)\}$. We have not included the proof of this fact in this paper.
\end{rmk}

Our classification of indecomposable module categories is achieved via the classification of central Lagrangian algebras. While we have a full understanding of the combinatorial structure of these Lagrangian algebras (i.e. the corresponding modular invariant), we do not have a sufficient description of the algebra map required to reconstruct the corresponding indecomposable module category. This makes our classification result somewhat non-constructive.

We are able to give explicit constructions of some of our module categories via alternate means. The details of this can be found in Section~\ref{sec:cons}, though it is worth highlighting the results here. The categories $\mathcal{M}_{d,+}$ are the \textit{simple current} module categories over $\mathcal{C}(\mathfrak{sl}_N, k)$. These are the categories corresponding to pointed algebra objects in $\mathcal{C}(\mathfrak{sl}_N, k)$ (which are classified by divisors of $N$ or $\frac{N}{2}$). These simple current module categories are fully understood. The module category $\mathcal{M}_{1,-}$ is typically called the \textit{charge conjugation} module category. From the general theory, we know that this module category is invertible, and has rank the number of self-dual objects in $\mathcal{C}(\mathfrak{sl}_N, k)$. There has been some work on determining the explicit structure of this module category (see \cite{gannonCharge,pet, beh}). We understand that Wenzl is currently working on constructing the module category as a quantum deformation of the subgroups
\[   Sp(N-1) \subset SL(N)\quad  \text{: $N$ odd }\qquad  Sp(N) \subset SL(N)  \quad  \text{: $N$ even.}   \]
At the time of writing, Wenzl has a construction of a projective version in the $N$ even case \cite{HansMod}, which we expect corresponds to the module category $\mathcal{M}_{\frac{N}{2},-}$. We expect that all module categories $\mathcal{M}_{d,-}$ with $d$ a divisor of $\frac{N}{2}$ can be constructed as a projective version of $\mathcal{M}_{1,-}$. In an alternate direction, one could also use the theory developed in \cite{dan} to extend the quantum $\mathfrak{gl}_N$ module categories constructed in \cite{beh}, up to $\mathcal{C}(\mathfrak{sl}_N, k)$ module categories. This latter approach was taken in \cite[Section 6]{dan} to construct the module categories $\mathcal{M}_{1,-},\mathcal{M}_{2,-}$, and $\mathcal{M}_{4,-}$ in the $N=4$ case.

We can interpret Theorem~\ref{thm:bigmain} as saying that the module categories $\mathcal{M}_{d,\varepsilon}$ are the generic module categories over $\mathcal{C}(\mathfrak{sl}_N,k)$. This is due to results of the second author, which show that there are only finitely many $k$ for each $N$ where the category $\mathcal{C}(\mathfrak{sl}_N,k)$ has non-pointed \textit{\'etale} algebra objects. Hence, running over all $k$, there are only finitely many indecomposable module categories over $\mathcal{C}(\mathfrak{sl}_N,k)$ other than the module categories $\mathcal{M}_{d,\varepsilon}$.

In the works \cite{GanSU3, Gannon,newGan} an explicit classification of \textit{\'etale} algebra objects in $\mathcal{C}(\mathfrak{sl}_N, k)$ is given for all $N\leq 7$. In particular, there are several levels $k$ where non-pointed \textit{\'etale} algebra objects exist. For each $3\leq N\leq 7$ there are exactly three of these exceptional levels. Furthermore, we expect that for $N>7$ there will always be only three exceptional levels (that is $k=N-2, N, N+2$).   Using the theory of \cite{triples}, we are able to classify the number of module categories over $\mathcal{C}(\mathfrak{sl}_N, k)$ for these exceptional levels, where the results of Theorem~\ref{thm:bigmain} do not hold.

\begin{thm}\label{thm:spec}
For the following exceptional pairs $(N,k)$ we have the following number of indecomposable module categories over $\mathcal{C}(\mathfrak{sl}_N, k)$:
    \begin{longtable}{c|c}
        $\mathcal{C}$ & $| \operatorname{Mod}(\mathcal{C})|$  \\ \hline \hline
        $\mathcal{C}(\mathfrak{sl}_3, 5)$  & $6$\\
        $\mathcal{C}(\mathfrak{sl}_3, 9)$  & $8$\\
        $\mathcal{C}(\mathfrak{sl}_3, 21)$  & $5$\\ \hline
        $\mathcal{C}(\mathfrak{sl}_4, 4)$  & $7$\\
        $\mathcal{C}(\mathfrak{sl}_4, 6)$  & $8$\\
        $\mathcal{C}(\mathfrak{sl}_4, 8)$  & $9$\\ \hline
        $\mathcal{C}(\mathfrak{sl}_5, 3)$  & $6$\\
        $\mathcal{C}(\mathfrak{sl}_5, 5)$  & $12$\\
        $\mathcal{C}(\mathfrak{sl}_5, 7)$  & $8$\\ \hline
        $\mathcal{C}(\mathfrak{sl}_6, 4)$  & $12$\\
        $\mathcal{C}(\mathfrak{sl}_6, 6)$  & $16$\\
        $\mathcal{C}(\mathfrak{sl}_6, 8)$  & $12$\\ \hline
        $\mathcal{C}(\mathfrak{sl}_7, 5)$  & $8$\\
        $\mathcal{C}(\mathfrak{sl}_7, 7)$  & $10$\\
        $\mathcal{C}(\mathfrak{sl}_7, 9)$  & $8$ \\ \hline
    \end{longtable}
\end{thm}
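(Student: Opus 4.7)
The plan is to apply the classification of module categories from \cite{triples}. For a non-degenerately braided fusion category $\mathcal{C}$, indecomposable module categories over $\mathcal{C}$ are in bijection with Lagrangian algebras in $Z(\mathcal{C}) \simeq \mathcal{C} \boxtimes \overline{\mathcal{C}}$, and these in turn correspond to equivalence classes of triples $(A, B, \phi)$, where $A, B$ are connected \'etale algebras in $\mathcal{C}$ and $\phi : \mathcal{C}_A^{\mathrm{loc}} \to \overline{\mathcal{C}_B^{\mathrm{loc}}}$ is a braided equivalence. Two such triples $(A, B, \phi)$ and $(A', B', \phi')$ are identified when there are algebra isomorphisms $A \cong A'$, $B \cong B'$ intertwining $\phi$ and $\phi'$ after transport. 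Thus for each of the fifteen exceptional pairs $(N,k)$, determining $|\operatorname{Mod}(\mathcal{C}(\mathfrak{sl}_N, k))|$ reduces to enumerating such triples.

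First I would tabulate the connected \'etale algebras in $\mathcal{C}(\mathfrak{sl}_N, k)$ for each $(N, k)$ in the table, using the classification from \cite{GanSU3, Gannon, newGan}: this yields finitely many algebras in each case, with the pointed ones indexed by divisors of $N$ or $N/2$ as in Theorem~\ref{thm:bigmain}, together with a short explicit list of non-pointed exceptions. Next, for each such algebra $A$, I identify $\mathcal{C}(\mathfrak{sl}_N, k)_A^{\mathrm{loc}}$ with a known modular tensor category: for pointed $A$ this is a simple-current extension of $\mathcal{C}(\mathfrak{sl}_N, k)$, and for exceptional $A$ it is either pointed or equivalent to some other small $\mathcal{C}(\mathfrak{g}', k')$, as established in previous literature and used already in the prequel.

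With these identifications in hand, I invoke the classifications of braided (auto)equivalences from \cite{ModPt1} and \cite{autos} to enumerate, for each ordered pair $(A, B)$, the braided equivalences $\phi : \mathcal{C}_A^{\mathrm{loc}} \to \overline{\mathcal{C}_B^{\mathrm{loc}}}$ modulo the action of the algebra automorphism groups of $A$ and $B$. Summing over pairs yields the total count of equivalence classes of triples, and hence the entries in the table. I expect the main obstacle to be the bookkeeping in the cases $(N,k) \in \{(3,9), (4,8), (5,5)\}$, where exceptional braided autoequivalences exist and can identify triples that appear distinct at first glance; careful tracking of how the algebra automorphism groups act on the relevant sets of braided equivalences is required to avoid over- or under-counting. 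Once this is handled consistently, the entries in the table follow by direct case-by-case enumeration.
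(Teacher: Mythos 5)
Your high-level framework is the same as the paper's: apply \cite[Corollary 3.8]{triples} to reduce the count to equivalence classes of triples, tabulate the \'etale algebras from \cite{Gannon,newGan}, identify their categories of local modules, and then enumerate braided equivalences modulo the induced action of the algebra automorphism groups. However, you have mislocated the main obstruction. You identify the difficulty as ``bookkeeping'' in the presence of exceptional braided autoequivalences, i.e.\ carefully tracking a known action. The actual difficulty is that for the non-pointed \'etale algebras $A$ (the conformal-embedding algebras and the exotic ones), the image of $\operatorname{Aut}(A)$ inside $\operatorname{EqBr}(\mathcal{C}_A^0)$ is \emph{not known a priori} and cannot simply be read off from \cite{ModPt1} or \cite{autos}. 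There is no classification of $\operatorname{Aut}(A)$ for these algebras, so ``enumerate modulo the action of the algebra automorphism groups'' is not something you can do directly. The paper has to develop new tools: a necessary condition (Corollary~\ref{cor:ind}, preservation of the forgetful functor) which kills most candidate elements of the image, a generators-and-relations argument (Theorem~\ref{thm:so}) for $A_{\mathfrak{so}_{N^2-1}}$ where the forgetful-functor test is inconclusive because $\operatorname{For}(S^+)\cong \operatorname{For}(S^-)$, and, in the cases $(N,k)=(3,9)$ and $(4,8)$, an appeal to explicit constructions of module categories in the literature to resolve an ambiguity that the abstract machinery leaves open. Without such arguments your ``direct case-by-case enumeration'' would produce intervals rather than numbers in several rows of the table.

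Two further points your proposal glosses over. First, your list of \'etale algebras is incomplete in spirit: for $\mathcal{C}(\mathfrak{sl}_6,6)$ and $\mathcal{C}(\mathfrak{sl}_7,7)$ the paper must construct new \'etale algebras ($A_{\mathfrak{sp}_{20}}^{\mathrm{ext}}$, $A_{\mathfrak{sp}_{20}}^{\mathrm{tr}}$, $A_{\mathfrak{so}_{48}}^{\mathrm{ext}}$, $A_{\mathrm{Schellekens}}$) beyond the pointed and conformal-embedding ones, and one of these ($A_{\mathfrak{sp}_{20}}^{\mathrm{tr}}$) has local modules with $\mathcal{C}(\mathfrak{sl}_2,10)$ fusion rules but a different braiding than $\mathcal{C}(\mathfrak{sp}_{20},1)$, distinguished only via twist data and the classification of braided structures on those fusion rules. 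Second, you allow pairs $(A,B)$ with $A\neq B$, but you do not flag that this phenomenon actually occurs (for $\mathfrak{sl}_6$ and $\mathfrak{sl}_7$ at $k=N$) and contributes the ``heterotic'' triples needed to reach the numbers $16$ and $10$; detecting that two inequivalent algebras have braided-equivalent local modules requires a central-charge argument, not just a fusion-rule comparison. So the proposal's strategy is correct in outline, but the steps you treat as routine contain the substantive mathematical content of the proof.
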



As in Theorem~\ref{thm:bigmain}, this construction is via central Lagrangian algebra objects. Hence we do not have a constructive classification of the indecomposable module categories. There has been some complementary work on the construction of these module categories. In the case of $\mathfrak{sl}_3$, the 7 exceptional module categories are constructed in \cite{SU3, lance}. For $\mathfrak{sl}_4$, the construction of the 6 exceptional module categories was obtained in \cite{dan}. From the results of \cite{LiuYB, LiuRing} and \cite{pres} the structure of the module categories corresponding to the conformal embeddings (the definition of which can be found in Section~\ref{sec:pre})
\[  \mathcal{V}\left(\mathfrak{sl}_N, N \pm 2    \right)\subset \mathcal{V}\left(\mathfrak{sl}_{ \frac{N(N\pm 2)}{2} },1   \right)\quad \text{and} \quad \mathcal{V}\left(\mathfrak{sl}_N, N   \right)\subset \mathcal{V}\left(\mathfrak{so}_{ N^2-1},1   \right)    \]
can be explicitly determined. This leaves thirteen exceptional module categories which remain to be explicitly constructed. Using the general framework of \cite{dan}, we expect to be able to provide these constructions in future work. 

In the cases where the modular invariants of the module category (i.e. the isomorphism class of the corresponding Lagrangian algebra) are linearly independent over $\mathbb{Z}$, we can determine the relative tensor products of the module categories in these exceptional cases. We have linear independence (by direct computation) in all exceptional cases except $\mathcal{C}(\mathfrak{sl}_3,9)$, $\mathcal{C}(\mathfrak{sl}_4,4)$, $\mathcal{C}(\mathfrak{sl}_5,5)$, and $\mathcal{C}(\mathfrak{sl}_7,7)$.

\begin{ex}
From Theorem~\ref{thm:spec} the category $\mathcal{C}(\mathfrak{sl}_6,6)$ has 16 indecomposable module categories up to equivalence. From the results of Subsection~\ref{sub:6} we have the modular invariant matrices of these module categories, and we verify that they are linearly independent. We label these module categories by 
\[\{ \mathcal{M}_{d,\pm} : d\in \{1,2,3,6\}\}\cup \{\mathcal{M}_i : 9 \leq i \leq 16\}.\] 
The last eight module categories are the exceptional ones. The first four of these eight correspond to the four non-pointed \textit{\'etale} algebra objects in $\mathcal{C}(\mathfrak{sl}_6,6)$, the two after that are the heterotic pairings of the \textit{\'etale} algebra objects corresponding to $\mathcal{M}_9$ and $\mathcal{M}_{11}$, and the final two correspond to type $\textrm{II}$ twists of the module categories $\mathcal{M}_{10}$ and $\mathcal{M}_{12}$. We refer the reader to \cite[Remark 3.10]{triples} for explanations of this terminology. We compute the following relative tensor product rules of these module categories.

\[
\resizebox{\textwidth}{!}{$
\begin{array}{c|ccccccccccccccccc}
\boxtimes_{\mathcal{C}(\mathfrak{sl}_6,6)} &\mathcal{M}_{1,+}&\mathcal{M}_{2,+} & \mathcal{M}_{3,+} &\mathcal{M}_{6,+} & \mathcal{M}_{1,-} & \mathcal{M}_{2,-} &\mathcal{M}_{3,-} &\mathcal{M}_{6,-} & \mathcal{M}_9 & \mathcal{M}_{10} & \mathcal{M}_{11} & \mathcal{M}_{12} & \mathcal{M}_{13} & \mathcal{M}_{14} &\mathcal{M}_{15} & \mathcal{M}_{16}\\ \hline
 \mathcal{M}_{1,+}& \mathcal{M}_{1,+}&\mathcal{M}_{2,+} & \mathcal{M}_{3,+} &\mathcal{M}_{6,+} & \mathcal{M}_{1,-} & \mathcal{M}_{2,-} &\mathcal{M}_{3,-} &\mathcal{M}_{6,-} & \mathcal{M}_9 & \mathcal{M}_{10} & \mathcal{M}_{11} & \mathcal{M}_{12} & \mathcal{M}_{13} & \mathcal{M}_{14} &\mathcal{M}_{15} & \mathcal{M}_{16}\\
\mathcal{M}_{2,+} &\mathcal{M}_{2,+} & \mathcal{M}_{1,+}&\mathcal{M}_{6,+} & \mathcal{M}_{3,+} & \mathcal{M}_{2,-} & \mathcal{M}_{1,-} &\mathcal{M}_{6,-} &\mathcal{M}_{3,-} & \mathcal{M}_9 &\mathcal{M}_{15} & \mathcal{M}_{11} & \mathcal{M}_{16}& \mathcal{M}_{13} & \mathcal{M}_{14} & \mathcal{M}_{10} & \mathcal{M}_{12} \\
 \mathcal{M}_{3,+} &\mathcal{M}_{3,+} &\mathcal{M}_{6,+} & 3 \mathcal{M}_{3,+} & 3\mathcal{M}_{6,+} &\mathcal{M}_{3,-} &\mathcal{M}_{6,-} & 3\mathcal{M}_{3,-} & 3\mathcal{M}_{6,-} & 3 \mathcal{M}_9 & 3 \mathcal{M}_{10} & 3 \mathcal{M}_{11} & 3 \mathcal{M}_{12} & 3 \mathcal{M}_{13} & 3 \mathcal{M}_{14} & 3\mathcal{M}_{15} & 3 \mathcal{M}_{16}\\
\mathcal{M}_{6,+} &\mathcal{M}_{6,+} & \mathcal{M}_{3,+} & 3\mathcal{M}_{6,+} & 3 \mathcal{M}_{3,+} &\mathcal{M}_{6,-} &\mathcal{M}_{3,-} & 3\mathcal{M}_{6,-} & 3\mathcal{M}_{3,-} & 3 \mathcal{M}_9 & 3\mathcal{M}_{15} & 3 \mathcal{M}_{11} & 3 \mathcal{M}_{16}& 3 \mathcal{M}_{13} & 3 \mathcal{M}_{14} & 3 \mathcal{M}_{10} & 3 \mathcal{M}_{12} \\
 \mathcal{M}_{1,-} &\mathcal{M}_{1,-} & \mathcal{M}_{2,-} &\mathcal{M}_{3,-} &\mathcal{M}_{6,-} & \mathcal{M}_{1,+}&\mathcal{M}_{2,+} & \mathcal{M}_{3,+} &\mathcal{M}_{6,+} & \mathcal{M}_9 & \mathcal{M}_{10} & \mathcal{M}_{11} & \mathcal{M}_{16}& \mathcal{M}_{13} & \mathcal{M}_{14} &\mathcal{M}_{15} & \mathcal{M}_{12} \\
 \mathcal{M}_{2,-} &\mathcal{M}_{2,-} & \mathcal{M}_{1,-} &\mathcal{M}_{6,-} &\mathcal{M}_{3,-} &\mathcal{M}_{2,+} & \mathcal{M}_{1,+}&\mathcal{M}_{6,+} & \mathcal{M}_{3,+} & \mathcal{M}_9 &\mathcal{M}_{15} & \mathcal{M}_{11} & \mathcal{M}_{12} & \mathcal{M}_{13} & \mathcal{M}_{14} & \mathcal{M}_{10} & \mathcal{M}_{16}\\
 \mathcal{M}_{3,-} &\mathcal{M}_{3,-} &\mathcal{M}_{6,-} & 3\mathcal{M}_{3,-} & 3\mathcal{M}_{6,-} & \mathcal{M}_{3,+} &\mathcal{M}_{6,+} & 3 \mathcal{M}_{3,+} & 3\mathcal{M}_{6,+} & 3 \mathcal{M}_9 & 3 \mathcal{M}_{10} & 3 \mathcal{M}_{11} & 3 \mathcal{M}_{16}& 3 \mathcal{M}_{13} & 3 \mathcal{M}_{14} & 3\mathcal{M}_{15} & 3 \mathcal{M}_{12} \\
\mathcal{M}_{6,-} &\mathcal{M}_{6,-} &\mathcal{M}_{3,-} & 3\mathcal{M}_{6,-} & 3\mathcal{M}_{3,-} &\mathcal{M}_{6,+} & \mathcal{M}_{3,+} & 3\mathcal{M}_{6,+} & 3 \mathcal{M}_{3,+} & 3 \mathcal{M}_9 & 3\mathcal{M}_{15} & 3 \mathcal{M}_{11} & 3 \mathcal{M}_{12} & 3 \mathcal{M}_{13} & 3 \mathcal{M}_{14} & 3 \mathcal{M}_{10} & 3 \mathcal{M}_{16}\\
\mathcal{M}_9 & \mathcal{M}_9 & \mathcal{M}_9 & 3 \mathcal{M}_9 & 3 \mathcal{M}_9 & \mathcal{M}_9 & \mathcal{M}_9 & 3 \mathcal{M}_9 & 3 \mathcal{M}_9 & 16 \mathcal{M}_9 & 4 \mathcal{M}_{14} & 8 \mathcal{M}_{14} & 4 \mathcal{M}_{14} & 8 \mathcal{M}_9 & 16 \mathcal{M}_{14} & 4 \mathcal{M}_{14} & 4 \mathcal{M}_{14} \\
 \mathcal{M}_{10} & \mathcal{M}_{10} &\mathcal{M}_{15} & 3 \mathcal{M}_{10} & 3\mathcal{M}_{15} & \mathcal{M}_{10} &\mathcal{M}_{15} & 3 \mathcal{M}_{10} & 3\mathcal{M}_{15} & 4 \mathcal{M}_{13} & 6 \mathcal{M}_{10}\boxplus \mathcal{M}_{11}  & 8 \mathcal{M}_{11} & 4 \mathcal{M}_{11} & 8 \mathcal{M}_{13} & 4 \mathcal{M}_{11} &  \mathcal{M}_{11} \boxplus 6\mathcal{M}_{15} & 4 \mathcal{M}_{11} \\
 \mathcal{M}_{11} &\mathcal{M}_{11} & \mathcal{M}_{11} & 3 \mathcal{M}_{11} & 3 \mathcal{M}_{11} & \mathcal{M}_{11} & \mathcal{M}_{11} & 3 \mathcal{M}_{11} & 3 \mathcal{M}_{11} & 8 \mathcal{M}_{13} & 8 \mathcal{M}_{11} & 16 \mathcal{M}_{11} & 8 \mathcal{M}_{11} & 16 \mathcal{M}_{13} & 8 \mathcal{M}_{11} & 8 \mathcal{M}_{11} & 8 \mathcal{M}_{11} \\
 \mathcal{M}_{12} &\mathcal{M}_{12} & \mathcal{M}_{16}& 3 \mathcal{M}_{12} & 3 \mathcal{M}_{16}& \mathcal{M}_{16}& \mathcal{M}_{12} & 3 \mathcal{M}_{16}& 3 \mathcal{M}_{12} & 4 \mathcal{M}_{13} & 4 \mathcal{M}_{11} & 8 \mathcal{M}_{11} & \mathcal{M}_{11} \boxplus 6\mathcal{M}_{12}& 8 \mathcal{M}_{13} & 4 \mathcal{M}_{11} & 4 \mathcal{M}_{11} & \mathcal{M}_{11}\boxplus 6\mathcal{M}_{16} \\
 \mathcal{M}_{13} &\mathcal{M}_{13} & \mathcal{M}_{13} & 3 \mathcal{M}_{13} & 3 \mathcal{M}_{13} & \mathcal{M}_{13} & \mathcal{M}_{13} & 3 \mathcal{M}_{13} & 3 \mathcal{M}_{13} & 16 \mathcal{M}_{13} & 4 \mathcal{M}_{11} & 8 \mathcal{M}_{11} & 4 \mathcal{M}_{11} & 8 \mathcal{M}_{13} & 16 \mathcal{M}_{11} & 4 \mathcal{M}_{11} & 4 \mathcal{M}_{11} \\
 \mathcal{M}_{14} & \mathcal{M}_{14} & \mathcal{M}_{14} & 3 \mathcal{M}_{14} & 3 \mathcal{M}_{14} & \mathcal{M}_{14} & \mathcal{M}_{14} & 3 \mathcal{M}_{14} & 3 \mathcal{M}_{14} & 8 \mathcal{M}_9 & 8 \mathcal{M}_{14} & 16 \mathcal{M}_{14} & 8 \mathcal{M}_{14} & 16 \mathcal{M}_9 & 8 \mathcal{M}_{14} & 8 \mathcal{M}_{14} & 8 \mathcal{M}_{14} \\
 \mathcal{M}_{15} &\mathcal{M}_{15} & \mathcal{M}_{10} & 3\mathcal{M}_{15} & 3 \mathcal{M}_{10} &\mathcal{M}_{15} & \mathcal{M}_{10} & 3\mathcal{M}_{15} & 3 \mathcal{M}_{10} & 4 \mathcal{M}_{13} & \mathcal{M}_{11} \boxplus 6\mathcal{M}_{15} & 8 \mathcal{M}_{11} & 4 \mathcal{M}_{11} & 8 \mathcal{M}_{13} & 4 \mathcal{M}_{11} & 6\mathcal{M}_{10} \boxplus \mathcal{M}_{11} & 4 \mathcal{M}_{11} \\
 \mathcal{M}_{16}&\mathcal{M}_{16}& \mathcal{M}_{12} & 3 \mathcal{M}_{16}& 3 \mathcal{M}_{12} & \mathcal{M}_{12} & \mathcal{M}_{16}& 3 \mathcal{M}_{12} & 3 \mathcal{M}_{16}& 4 \mathcal{M}_{13} & 4 \mathcal{M}_{11} & 8 \mathcal{M}_{11} & \mathcal{M}_{11}\boxplus 6\mathcal{M}_{16} & 8 \mathcal{M}_{13} & 4 \mathcal{M}_{11} & 4 \mathcal{M}_{11} &\mathcal{M}_{11}\boxplus 6\mathcal{M}_{12} \\
\end{array}$
}
\]

\end{ex}

The paper is outlined as follows.

In Section~\ref{sec:pre} we introduce the required background on tensor categories required for this paper. In particular, we present the result \cite[Corollary 3.8]{triples} which is our key tool for proving Theorem~\ref{thm:bigmain} and Theorem~\ref{thm:spec}. We also summarise the key results of \cite{Gannon,newGan} and \cite{ModPt1} which are the base point for this paper.

In Section~\ref{sec:gen} we determine the number of indecomposable module categories over $\mathcal{C}(\mathfrak{sl}_N,k)$, for generic $k$. Using the classification results of \cite{ModPt1}, we are quickly able to apply the theory of \cite{triples} to obtain a non-closed formula for the rank of $   \operatorname{Mod}( \mathcal{C}(\mathfrak{sl}_N, k)   ) $ in the generic case. We finish this section by providing a surprisingly simple closed form expression for the rank of $   \operatorname{Mod}( \mathcal{C}(\mathfrak{sl}_N, k)   ) $.

In Section~\ref{sec:cons} we find enough indecomposable $\mathcal{C}(\mathfrak{sl}_N, k)$-module categories to realise the number of module categories abstractly classified in Section~\ref{sec:gen}. We determine the modular invariants of these module categories. Using these modular invariants, we deduce that the module categories we have obtained are distinct, and hence give a complete set of representatives. Furthermore, we show that in the generic setting these modular invariants are linearly independent. This allows us to determine the relative tensor product rules for these module categories in the generic setting.

In Section~\ref{sec:spec} we give the proof of Theorem~\ref{thm:spec}. As this theorem deals with the non-generic cases, we have to carefully consider the \textit{\'etale} algebra objects in each of the categories $\mathcal{C}(\mathfrak{sl}_N, k)$ in order to apply the theory of \cite[Corollary 3.8]{triples}. The classification of these \textit{\'etale} algebra objects for the relevant categories was obtained in \cite{Gannon,newGan}. In a case by case analysis we are able to provide the proof of Theorem~\ref{thm:spec}. The main technical difficulty we encounter in applying the theory of \cite[Corollary 3.8]{triples} is determining the image of $\operatorname{Aut}(A)$ in $\operatorname{EqBr}(\mathcal{C}(\mathfrak{sl}_N, k)_A^0)$ for the exceptional non-pointed \textit{\'etale} algebra objects $A$. We develop several tools for resolving this difficulty.

We finish the paper with Appendix~\ref{app:fix}, which contains a minor erratum to a lemma in the prequel to this paper \cite{ModPt1}. As we require the correct statement of the lemma in Section~\ref{sec:cons}, we include this erratum here.

\subsection*{Acknowledgements}
CE would like to thank Dmitri Nikshych and  Hans Wenzl for many useful conversations. CE was supported by NSF grants NSF DMS 2245935 and DMS 2400089. The research of TG was supported in part by NSERC. Both authors thank MSRI/SLMath, where part of this paper was written, for a great work environment.



\section{Preliminaries}\label{sec:pre}
We refer the reader to \cite{book} for the basics on tensor categories.

\subsection{  \textit{\'Etale} algebra objects}

In this subsection, we review the relevant background material on \textit{\'etale} algebra objects in braided tensor categories. We also prove some general results in the case that the algebra is pointed. Much of the following can be found in \cite[Section 3]{LagrangeUkraine}.

\begin{defn}
    Let $\mathcal{C}$ be a braided tensor category. We say an algebra object $A\in \mathcal{C}$ is \textit{\'etale} if it is both commutative and separable. 
\end{defn}
As the braided tensor categories we consider in this paper have pivotal structures, we must consider the twists of the \textit{\'etale} algebra objects in these categories.

Given an algebra object, we can construct the category of right $A$-module objects. 

\begin{defn}
Let $A\in \mathcal{C}$ be an algebra object. We write $\mathcal{C}_A$ for the category of right $A$-module objects internal to $\mathcal{C}$. 
\end{defn}
When $A$ is \textit{\'etale}, the category $\mathcal{C}_A$ has the structure of a semisimple tensor category via the following tensor product.

\begin{defn}
   Let $A\in \mathcal{C}$ be an \textit{\'etale} algebra object, and $M_1,M_2\in \mathcal{C}_A$. We define $M_1 \otimes_A M_2$ as the image of the projection:
   \[ \att{idmod}{.4}  \]
\end{defn}

For an \textit{\'etale} algebra object $A$, there is a distinguished subcategory of $\mathcal{C}_A$ consisting of local modules.

\begin{defn}
    Let $A\in \mathcal{C}$ be an \textit{\'etale} algebra object, and $M\in \mathcal{C}_A$. We say $M$ is a local module if
    \[  \att{loc1}{.4} \quad=\quad \att{loc2}{.4}. \]
    We write $\mathcal{C}_A^0$ for the subcategory of local modules in $\mathcal{C}_A$.
\end{defn}

It is shown in \cite[Corollary 3.30]{LagrangeUkraine} that $\mathcal{C}_A^0$ is a braided tensor category.

An important invariant of an algebra object is its group of automorphisms. These are the invertible algebra maps from the algebra to itself. 
\begin{defn}
    Let $A,B\in \mathcal{C}$ be an algebra objects. A map of algebras is a morphism $f\in \operatorname{Hom}_\mathcal{C}(A\to B)$ satisfying
    \[    \att{algaut}{.3} \quad = \quad \att{algaut2}{.3}  .   \]
    We will write $\operatorname{Aut}(A)$ for the group of algebra automorphisms of $A$.
\end{defn}

Given an algebra isomorphism $f: A\to B$, we naturally obtain a monoidal equivalence $\mathcal{C}_B\to \mathcal{C}_A$. The details of this equivalence are important for this paper, so we include them below.

\begin{lem}\label{lem:induce}
Let $\mathcal{C}$ be a braided (pivotal) tensor category, and $A,B\in \mathcal{C}$ \textit{\'etale} algebra objects (with ribbon twists $\theta_A = 1 = \theta_B$), and $f:A\to B$ an algebra isomorphism. Then there is a (pivotal) monoidal equivalence $\operatorname{Ind}(f): \mathcal{C}_B \to \mathcal{C}_A$. This equivalence is defined on objects by 
\[   (X, r_X) \mapsto (X, r_X\circ(\operatorname{id}_X\otimes \phi))  .  \]
Further, we have that $\operatorname{Ind}(\phi)$ restricts to a braided (pivotal) equivalence $\mathcal{C}^0_B\to \mathcal{C}^0_A$. 
\end{lem}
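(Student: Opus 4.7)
The plan is to verify each structural component of the equivalence in turn, since most of the work amounts to tracking how the algebra homomorphism $f$ interacts with the module and locality axioms. First, I would confirm that on objects the prescribed formula $(X, r_X) \mapsto (X, r_X \circ (\operatorname{id}_X \otimes f))$ really gives a right $A$-module: the associativity axiom transfers from $B$ to $A$ because $f$ intertwines multiplications, and the unit axiom because $f$ preserves units. On morphisms $\operatorname{Ind}(f)$ acts as the identity of the underlying morphism; a $B$-intertwiner $g : (X, r_X) \to (Y, r_Y)$ remains an $A$-intertwiner after post-composing the defining equation with $\operatorname{id} \otimes f$ on the right. Functoriality is then automatic, and since $\operatorname{Ind}(f^{-1}) \circ \operatorname{Ind}(f) = \operatorname{id}_{\mathcal{C}_B}$ holds on the nose (the composite action is $r_X \circ (\operatorname{id} \otimes f) \circ (\operatorname{id} \otimes f^{-1}) = r_X$), we in fact get an isomorphism of categories with inverse $\operatorname{Ind}(f^{-1})$.

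For the monoidal structure I would take the identity morphism as the coherence isomorphism $\operatorname{Ind}(f)(M_1 \otimes_B M_2) \xrightarrow{\sim} \operatorname{Ind}(f)(M_1) \otimes_A \operatorname{Ind}(f)(M_2)$. To justify this, I would compare the two idempotents on the underlying object $M_1 \otimes M_2$ whose images compute $M_1 \otimes_B M_2$ and the $\otimes_A$-product of the induced modules. These idempotents are built from the module actions of $M_1$ and $M_2$ plus the ambient braiding; after substituting $r_{M_i} \circ (\operatorname{id} \otimes f)$ for the $A$-actions, the pair $f, f^{-1}$ can be inserted and absorbed using only the compatibility of $f$ with multiplication, showing that the two idempotents agree as endomorphisms in $\mathcal{C}$. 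With the identity as the coherence data the hexagon and associativity constraints of a monoidal functor collapse to identities, so nothing further needs to be checked.

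Locality is preserved by a short braiding calculation: naturality of the braiding lets one slide $\operatorname{id}_X \otimes f$ through $c_{A,X} \circ c_{X,A}$, converting it into $c_{B,X} \circ c_{X,B}$ acting before $r_X$, and then the local-module equation for $B$ gives the one for $A$. Because the braidings on $\mathcal{C}_A^0$ and $\mathcal{C}_B^0$ are both inherited from the ambient braiding of $\mathcal{C}$, and $\operatorname{Ind}(f)$ acts as the identity on underlying morphisms with trivial monoidal coherence, the restricted functor is automatically braided. The pivotal case is handled identically, since the pivotal structures on $\mathcal{C}_A, \mathcal{C}_B$ descend from $\mathcal{C}$ and the identity coherence trivially respects them.

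The only genuine technical point is the graphical identity equating the two $\otimes_A$- and $\otimes_B$-projectors on $M_1 \otimes M_2$ after pullback; this is really a restatement of $f$ being an algebra homomorphism, but the string-diagram manipulation needs to be done carefully. Once that is in place, every remaining verification is bookkeeping on morphisms that are literally identities in $\mathcal{C}$.
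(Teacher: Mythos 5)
Your proposal is correct and is exactly what the paper's one-line proof (``This is a direct computation.'') is alluding to: a routine verification of the module axioms, the monoidal coherence, locality, and the braided/pivotal compatibilities, all of which reduce to the algebra-homomorphism property of $f$ and naturality of the braiding. The one place where some care is hidden behind ``bookkeeping'' is your claim that the $\otimes_A$- and $\otimes_B$-projectors literally coincide: this requires that the separability/Frobenius data of $A$ pulls back under $f$ to that of $B$ (so that the $f$'s inserted next to the module actions are absorbed by the $f^{-1}$'s coming from the separability element), which holds because for these connected étale algebras with trivial twist the Frobenius structure is canonical and any algebra isomorphism preserves it — but it is worth saying explicitly rather than folding it into ``$f$ is an algebra homomorphism.''
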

\begin{proof}
    This is a direct computation.
\end{proof}

In particular, the image $\operatorname{Ind}(\operatorname{Aut}(A))$ gives a distinguished subgroup of $\operatorname{EqBr}(\mathcal{C}_A^0)$.

\begin{defn}\label{def:act}
  Let $\mathcal{C}$ be a braided tensor category, and $A\in \mathcal{C}$ an \textit{\'etale} algebra object. We will write
  \[         \operatorname{Aut}(A) /   \operatorname{EqBr}\left(\mathcal{C}_A^0\right) \backslash   \operatorname{Aut}(A)     \]
  for the set of double cosets with respect to the subgroup $\operatorname{Ind}\left( \operatorname{Aut}(A) \right)$.
\end{defn}

For many of the \textit{\'etale} algebra objects we study in this paper, we do not have direct knowledge of their automorphism groups $\operatorname{Aut}(A)$, but we do know the structure of $\operatorname{EqBr}(\mathcal{C}_A^0)$. The following observation gives us an indirect way to detect if an autoequivalence of $\mathcal{C}_A^0$ is in the image of $\operatorname{Ind}$.

\begin{cor}\label{cor:ind}
Let $A\in \mathcal{C}$ an \textit{\'etale} algebra object, and $\operatorname{For}_A: \mathcal{C}_A \to \mathcal{C}$ the forgetful functor, and $\phi \in \operatorname{Aut}(A)$. Then
\[    \operatorname{For}_A\circ \operatorname{Ind}(\phi)(M) \cong \operatorname{For}_A(M) \]
for all $M\in \mathcal{C}_A$.
\end{cor}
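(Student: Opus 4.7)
The plan is to unfold the explicit formula for $\operatorname{Ind}(\phi)$ provided by Lemma~\ref{lem:induce} and observe that it does not alter the underlying object, only the module action. More precisely, given a right $A$-module $M = (X, r_X) \in \mathcal{C}_A$, Lemma~\ref{lem:induce} defines
\[\operatorname{Ind}(\phi)(M) \;=\; (X,\; r_X \circ (\operatorname{id}_X \otimes \phi)),\]
so the underlying object of $\operatorname{Ind}(\phi)(M)$ in $\mathcal{C}$ is literally the same object $X$ as the underlying object of $M$. Applying the forgetful functor $\operatorname{For}_A$ to both sides, which by definition just returns the underlying object, yields
\[\operatorname{For}_A \circ \operatorname{Ind}(\phi)(M) \;=\; X \;=\; \operatorname{For}_A(M),\]
and in particular the two are (canonically) isomorphic in $\mathcal{C}$.

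I expect no real obstacle here: the content is entirely formal, and the isomorphism is in fact the identity on the underlying object. The only subtlety worth flagging is that we are stating an isomorphism in $\mathcal{C}$, not in $\mathcal{C}_A$ (which would be false in general, since the $A$-module structures differ by precomposition with $\phi$). Thus it suffices to observe that the definition of $\operatorname{Ind}(\phi)$ in Lemma~\ref{lem:induce} only twists the $A$-action and leaves the object fixed, which is then washed out by the forgetful functor.
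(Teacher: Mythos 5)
Your proof is correct and takes the same approach as the paper, which simply notes the result is immediate from the definition of $\operatorname{Ind}(\phi)$ in Lemma~\ref{lem:induce}: the induced functor leaves the underlying object fixed and only precomposes the module action with $\phi$, so the forgetful functor returns the same object. The extra remark that the isomorphism holds in $\mathcal{C}$ but not in $\mathcal{C}_A$ is a useful clarification but not a point of disagreement.
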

\begin{proof}
This is immediate from the definition of $\operatorname{Ind}(\phi)$ given in Lemma~\ref{lem:induce}.
\end{proof}
Note that this statement is not an exact characterisation of braided autoequivalences of $\mathcal{C}_A^0$ in the image of $\operatorname{Ind}$. Later in this paper we will encounter braided autoequivalences $\mathcal{F}$ of $\mathcal{C}_A^0$ satisfying the condition in Corollary~\ref{cor:ind}, but which are not in the image of $\operatorname{Ind}$. 

A special class of simple objects in a tensor category $\mathcal{C}$ are the invertible objects. These objects play a key role in this paper.
\begin{defn}
    Let $\mathcal{C}$ be a tensor category. We say a simple object $g\in \mathcal{C}$ is invertible if there exists an object $g^{-1}\in \mathcal{C}$ such that $g\otimes g^{-1} \cong \mathbf{1}$. We will write $\operatorname{Inv}(\mathcal{C})$ for the group of isomorphism classes of invertible objects in $\mathcal{C}$. The group operation is given by tensor product. 
\end{defn}

In the special case that the \textit{\'etale} algebra $A$ is \textit{pointed cyclic}, we can describe the category $\mathcal{C}_A$ in explicit detail.

\begin{defn}
    We say an algebra $A\in \mathcal{C}$ is pointed cyclic if it is of the form
    \[     A \cong \bigoplus_{0 \leq i <m} g^{\otimes i}   \]
for some invertible object $g\in \mathcal{C}$.
\end{defn}

Given a pointed cyclic algebra, we have that the set $\{g^{\otimes i} \mid 0 \leq i < m \}$ forms a subgroup of $\operatorname{Inv}(\mathcal{C})$ isomorphic to $\mathbb{Z}_m$. In fact this set generates a monoidal subcategory of $\mathcal{C}$ equivalent to $\operatorname{Vec}(\mathbb{Z}_m)$. In particular, a collection of morphisms 
\[      \att{trivalent}{.4} : g^{j} \otimes g^i \to g^{j+i}  \]
can be chosen with trivial 6-j symbols.

For $X \in \mathcal{C}$ we define $[X]$ to be the orbit of $X$ under the tensor action by $\mathbb{Z}_m$, and $\operatorname{Stab}(X)$ to be the subset of $\{g^{\otimes i} \mid 0 \leq i < m \}$ which fixes the isomorphism class of $X$. Note that $\operatorname{Stab}(X) \cong   \mathbb{Z}_{\frac{m}{|[X]| }}$. Finally, we let
\[  r_X := \att{fixed}{.4} : X\otimes g^{|[X]|}\to X   \]
be an isomorphism, normalised so that
\[       \att{fixedNorm}{.4} = \att{fixedNorm2}{.4}   \]

The following result is well known. See for example \cite[Corollary 5.3]{muger}. However, we are not aware of the below proposition appearing in its exact form in the literature. Hence we give a proof for completeness sake.
\begin{prop}\label{prop:12}\cite{muger}
    Let $\mathcal{C}$ be a braided tensor category, and 
    \[A\cong \bigoplus_{0\leq i < m} g^{\otimes i}\in \mathcal{C}\] a pointed cyclic \textit{\'etale} algebra. Then every simple $A$-module in $\mathcal{C}_A$ corresponds to a pair $(X, \chi)$ where $X$ is a simple object of $\mathcal{C}$, and $\chi$ is an element of $\widehat{\mathbb{Z}_m}$. The simple $A$-module corresponding to $(X,\chi)$ is $( M_{X}, r^\chi)$ where \[ M_{X} := \bigoplus_{j \in \mathbb{Z}_{|[X]|}} X\otimes g^{j} \]
    and $r^\chi: M_{X}\otimes A \to M_{X}$ is defined on simple components by
    \[r^\chi_{j,i} :=\chi(i) \att{moduleAct}{.4}  : (X\otimes g^j) \otimes g^i \to g^r \]
    where $\alpha $ and $r$ are such that $i+j = \alpha |[X]| + r$ with $r \in \mathbb{Z}_{|[X]|}$. For two pairs $(X_1, \chi_1)$, $(X_2, \chi_2)$ we have that $( M_{X_1}, r^{\chi_1})\cong ( M_{X_2}, r^{\chi_2})$ as $A$-modules if and only if $[X_1]= [X_2]$, and $\chi_1 = \mu \cdot \chi_2$ for some $\mu\in  \widehat{\mathbb{Z}_m}$ such that $\mu|_{\operatorname{Stab}(X_1)} = \operatorname{id}$.
\end{prop}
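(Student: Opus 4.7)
The plan is to proceed via the standard free–forgetful adjunction for algebra modules, supplemented by a direct analysis of how the cyclic algebra decomposes free modules on simple objects.

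First, I would verify that $(M_X, r^\chi)$ really defines a right $A$-module, which amounts to checking associativity of $r^\chi$ with the multiplication of $A$. The multiplication of $A$ on graded components $g^j \otimes g^{i} \to g^{j+i}$ is determined by the trivalent vertices with trivial $6j$-symbols. The scalars combine correctly because $\chi$ is a character, $\chi(i)\chi(i') = \chi(i+i')$, while the normalization condition on $r_X$ guarantees that applying $r_X$ once after combining two grading shifts agrees with applying it when one shift crosses a multiple of $|[X]|$. Second, I would establish simplicity: $M_X$ decomposes in $\mathcal{C}$ as a sum of pairwise non-isomorphic simples $X \otimes g^j$ (if $X \otimes g^{j_1} \cong X\otimes g^{j_2}$ then $g^{j_2-j_1} \in \operatorname{Stab}(X)$, forcing $j_1 \equiv j_2 \bmod |[X]|$), so any $A$-subobject is a sum of some of these summands; but $g \subset A$ acts by cyclically permuting the summands, so no proper non-empty subset is $A$-stable.

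Third, I would prove exhaustiveness via the free module $Y \otimes A$. For any simple $A$-module $M$ and any simple summand $Y$ of its underlying $\mathcal{C}$-object, adjunction gives a non-zero (hence surjective) map $Y \otimes A \twoheadrightarrow M$, exhibiting $M$ as a summand of $Y \otimes A$. The underlying $\mathcal{C}$-object $\bigoplus_{i=0}^{m-1} Y\otimes g^i$ is $|\operatorname{Stab}(Y)|$ copies of $M_Y$. A Fourier decomposition over $\widehat{\operatorname{Stab}(Y)}$ splits $Y \otimes A$ as an $A$-module into $|\operatorname{Stab}(Y)|$ simple $A$-modules, each of the form $(M_Y, r^\chi)$ for a choice of $\chi \in \widehat{\mathbb{Z}_m}$ extending the corresponding character on $\operatorname{Stab}(Y)$. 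This both produces our list and signals the expected equivalence relation.

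Finally, I would classify isomorphisms. An $A$-module isomorphism $(M_{X_1}, r^{\chi_1}) \cong (M_{X_2}, r^{\chi_2})$ forces $[X_1] = [X_2]$ (by matching simple $\mathcal{C}$-summands). Reducing to $X_1 = X_2 = X$ (absorbing any shift $X_2 = X_1 \otimes g^k$ into a modification of $\chi_2$ by an element of $\widehat{\mathbb{Z}_m}$), any intertwiner must be diagonal $\phi = \bigoplus_j \lambda_j \, \operatorname{id}_{X\otimes g^j}$. Compatibility with the action of $g$ forces $\lambda_{j+1}/\lambda_j = \chi_1(1)/\chi_2(1) = \mu(1)$, and the single wrap-around step at $j = |[X]| - 1$, where $r_X$ enters the action, forces $\mu(|[X]|) = 1$, i.e.\ $\mu|_{\operatorname{Stab}(X)} = \operatorname{id}$. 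Conversely, any such $\mu$ produces an intertwiner by choosing $\lambda_j = \mu(1)^j$.

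The main obstacle will be the bookkeeping in the third step: verifying that the Fourier decomposition of the free module actually respects the $A$-action as claimed, and identifying the resulting summands as the $(M_Y, r^\chi)$ of the proposition. This is where the normalization of $r_X$, the characters of $\operatorname{Stab}(Y)$, and their extensions to $\widehat{\mathbb{Z}_m}$ must all be reconciled simultaneously; the same wrap-around subtlety re-appears in step four as the constraint $\mu(|[X]|) = 1$.
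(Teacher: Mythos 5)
Your plan is correct and broadly parallels the paper's proof, with one noteworthy structural difference in how exhaustiveness is handled. The paper proves the isomorphism classification first, then shows exhaustiveness by including each $(M_X, r^\chi)$ into $\mathcal{F}_A(X)$, counting the non-isomorphic such inclusions using the already-established isomorphism criterion (giving $|\operatorname{Stab}(X)|$ of them), matching this against the adjunction computation $\dim\operatorname{End}_{\mathcal{C}_A}(\mathcal{F}_A(X)) = |\operatorname{Stab}(X)|$, and finally invoking dominance of $\mathcal{F}_A$. You instead propose to decompose $\mathcal{F}_A(Y)$ directly via a ``Fourier decomposition'' over $\widehat{\operatorname{Stab}(Y)}$, which — if carried out — does not logically depend on having the isomorphism classification in hand, so your ordering (exhaustiveness before classification) is self-consistent. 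The trade-off is that the paper's counting argument is slicker and avoids the careful bookkeeping you correctly flag as the main obstacle: you still have to prove that the $\widehat{\operatorname{Stab}(Y)}$-isotypic pieces of $Y\otimes A$ are genuine $A$-submodules of the form $(M_Y, r^{\chi})$, which requires exactly the kind of wrap-around/normalization reconciliation the paper sidesteps by the $\dim\operatorname{End}$ count. Your step 2 (explicit simplicity argument via cyclic permutation of summands) fills in what the paper leaves as ``routine,'' and your step 4 matches the paper's diagonal-intertwiner argument. So: a valid and essentially equivalent proof, slightly more constructive, at the cost of the step-3 bookkeeping the paper's $\dim\operatorname{End}$ computation dispatches in one line.
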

\begin{proof}
    It is routine to verify that $( M_{X}, r^{\chi})$ is a right $A$-module, and that it is simple. 
    
    Let $( M_{X_1}, r^{\chi_1})$, and $( M_{X_2}, r^{\chi_2})$ be two such $A$-modules. Clearly if $[X_1] \neq [X_2]$, then there is no non-zero map between $M_{X_1}$ and $M_{X_2}$ in $\mathcal{C}$, and so $( M_{X_1}, r^{\chi_1}) \not\cong ( M_{X_2}, r^{\chi_2})$ as $A$-modules. If $[X_1] = [X_2]$, then we have that $M_{X_1} \cong M_{X_2}$ as objects in $\mathcal{C}$. Let $\mu: ( M_{X_1}, r^{\chi_1}) \to ( M_{X_1}, r^{\chi_2})$ be an $A$-module isomorphism. It follows that $\mu_{X_1\otimes g^{\otimes j}} = \mu_{j} \operatorname{id}_{X_1\otimes g^{\otimes j}}$ for complex scalars $\mu_j$. We rescale the isomorphism $\mu$ so that $\mu_0 = 1$. We then have by the definition of an $A$-module isomorphism that 
    \[     \chi_1(i) \mu_j  = \chi_2(i) \mu_{i+j} \]
    for all $i \in \mathbb{Z}_m$, and for all $j \in \mathbb{Z}_{|[X_1]|}$, where the $i+j$ is taken mod $|[X_1]|$. Taking $j = 0$ shows that $\mu = \frac{\chi_1}{\chi_2}\in \widehat{\mathbb{Z}_m}$, which implies that $\chi_1|_{\operatorname{Stab}(X_1)}=\chi_2|_{\operatorname{Stab}(X_1)}$.

    To show that $( M_{X}, r^{\chi})$ exhaust all simple $A$-modules, we observe that there is a natural $A$-module map $( M_{X}, r^{\chi})\to \mathcal{F}_A(X)$ given by inclusion. By the previous discussion, this gives $|\widehat{\mathbb{Z}_m} / \{ \mu : \mu|_{\operatorname{Stab}(X)} = \operatorname{id}  \}| = |    \operatorname{Stab}(X) |$ non-isomorphic summands of $\mathcal{F}_A(X)$. On the other hand, we have
    \[   \operatorname{dimEnd}_{\mathcal{C}_A}( \mathcal{F}_A(X)) = \operatorname{dimHom}_{\mathcal{C}}( A\otimes X \to X) = |    \operatorname{Stab}(X) |.   \]
    We thus have
    \[      \mathcal{F}_A(X) \cong \bigoplus_{\chi\in \widehat{\mathbb{Z}_m} / \{ \mu : \mu|_{\operatorname{Stab}(X)} = \operatorname{id}  \}}  ( M_{X}, r^{\chi}) .  \]
    As the functor $\mathcal{F}_A$ is dominant, it follows that every simple $A$-module is isomorphic to some $ ( M_{X}, r^{\chi})$.
\end{proof}

\begin{rmk}
    It is important to note that the classification of simples of $\mathcal{C}_A$ in the above Proposition is different than in the prequel \cite{ModPt1}. In the prequel, the simples were classified in terms of characters of the group $\mathbb{Z}_{\operatorname{Stab}(X)}$, while in this paper they are classified in terms of characters of $\mathbb{Z}_m$ modulo the characters which are trivial on the $\operatorname{Stab}(X)$ subgroup. These groups are isomorphic via the restriction map $\mathbb{Z}_m\to \mathbb{Z}_{\operatorname{Stab}(X)}$. The reason for this difference is that in the prequel, it was most natural to realise $\mathcal{C}_A$ as the Cauchy completion of a generators and relations category, while in this paper it is most natural to work with $\mathcal{C}_A$ directly.
\end{rmk}

We will be interested in the automorphism groups of pointed cyclic algebras in $\mathcal{C}$. As this algebra lives in a pointed subcategory of $\mathcal{C}$, the automorphism group can be explicitly computed.

\begin{prop}\label{prop:13}
    Let $\mathcal{C}$ be a modular tensor category, and $A\cong \bigoplus_{0\leq i < m} g^{\otimes i}\in \mathcal{C}$ a pointed cyclic \textit{\'etale} algebra. Then $\operatorname{Aut}_{\mathcal{C}}(A)\cong \widehat{\mathbb{Z}_m}$ with the isomorphism sending $\phi \in \widehat{\mathbb{Z}_m}$ to the automorphism $\eta^\phi$ defined on components by 
    \[    \eta^\phi_{g^j}:= \phi(j) \operatorname{id}_{g^j}: g^j \to g^j.       \]
\end{prop}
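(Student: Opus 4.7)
The plan is to reduce the computation of $\operatorname{Aut}_\mathcal{C}(A)$ to a purely combinatorial problem in the pointed subcategory generated by $g$, using Schur's lemma and the triviality of the $6j$-symbols on the trivalent vertices fixed in the setup.

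First I would observe that since each $g^{\otimes j}$ is simple and the objects $g^{\otimes j}$ for $0 \leq j < m$ are pairwise non-isomorphic (this is part of what it means for $A$ to be an algebra decomposition of length $m$), any morphism $\phi: A \to A$ in $\mathcal{C}$ is forced by Schur's lemma to decompose as $\phi = \bigoplus_j \phi_j \operatorname{id}_{g^{\otimes j}}$ for scalars $\phi_j \in \mathbb{C}$. Invertibility of $\phi$ is equivalent to every $\phi_j$ being nonzero. The unit axiom applied to $\phi$ says precisely that $\phi_0 = 1$.

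Next I would encode the multiplicativity condition. By construction the multiplication $\mu: A \otimes A \to A$ restricts on the $(g^{\otimes j}, g^{\otimes i})$-component to the chosen trivalent vertex $g^{\otimes j} \otimes g^{\otimes i} \to g^{\otimes j+i}$ (indices mod $m$), whose $6j$-symbols are trivial. The algebra-map condition $\phi \circ \mu = \mu \circ (\phi \otimes \phi)$ therefore reads, on each $(i,j)$-component, as the scalar identity $\phi_{i+j} = \phi_i \phi_j$. In other words, the tuple $(\phi_j)_{j \in \mathbb{Z}_m}$ is exactly a character $\phi: \mathbb{Z}_m \to \mathbb{C}^\times$, and any such character yields a well-defined algebra automorphism $\eta^\phi$ by the same componentwise formula.

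Finally I would note that the assignment $\phi \mapsto \eta^\phi$ is manifestly a group homomorphism, because $\eta^{\phi \cdot \psi}_{g^{\otimes j}} = (\phi(j)\psi(j)) \operatorname{id}$ factors as the composite of the scalars on each component, and the previous paragraph shows it is a bijection onto $\operatorname{Aut}_\mathcal{C}(A)$. The main (and essentially only) subtlety is bookkeeping the normalisation: one must use the specific trivalent vertices chosen in the setup preceding Proposition~\ref{prop:12}, whose trivial $6j$-symbols turn the algebra-map condition into the clean character equation $\phi_{i+j}=\phi_i\phi_j$ without stray scaling factors. Once that normalisation is in force, every step is a direct unravelling of definitions.
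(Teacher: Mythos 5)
Your proof is correct and takes essentially the same route as the paper: the paper observes that $A$ lives in the $\operatorname{Vec}(\mathbb{Z}_m)$ pointed subcategory generated by $g$ and then says "the result follows by direct computation," and your argument is exactly that direct computation spelled out — Schur's lemma reduces endomorphisms of $A$ to scalar tuples $(\phi_j)$, the trivial $6j$-symbols on the chosen trivalent vertices turn multiplicativity into $\phi_{i+j}=\phi_i\phi_j$, and invertibility (which forces $\phi_0 = 1$ via $\phi_0 = \phi_0^2$) identifies the solutions with $\widehat{\mathbb{Z}_m}$.
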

\begin{proof}
    As $A\cong \bigoplus_{0\leq i < m} g^{\otimes i}$ is an algebra, it follows that the pointed subcategory of $\mathcal{C}$ generated by $g$ is monoidally equivalent to $\operatorname{Vec}(\mathbb{Z}_m)$, and that the algebra $A$ lives in this subcategory. The result then follows by direct computation.
\end{proof}

With this explicit description of both $\mathcal{C}_A$ and $\operatorname{Aut}(A)$ in the cyclic pointed case, we can explicitly determine the map $\operatorname{Ind} : \operatorname{Aut}(A) \to \operatorname{Eq}(\mathcal{C}_A)$ from Lemma~\ref{lem:induce} in this setting.

\begin{prop}\label{prop:imgpt}
     Let $\mathcal{C}$ be a modular tensor category, and $A\cong \bigoplus_{0\leq i < m} g^{\otimes i}\in \mathcal{C}$ a pointed cyclic \textit{\'etale} algebra. Then for $\ell \in \widehat{\mathbb{Z}_m}$, and $(X,\chi)$ a simple object of $\mathcal{C}_A$, we have
     \[ \operatorname{Ind}(\eta^\ell)( X, \chi) = \left(X, \ell \cdot \chi\right).      \]
\end{prop}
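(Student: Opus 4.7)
The plan is to prove Proposition~\ref{prop:imgpt} by a direct unwinding of the three explicit descriptions we already have: the formula for $\operatorname{Ind}$ from Lemma~\ref{lem:induce}, the parametrisation of simples of $\mathcal{C}_A$ from Proposition~\ref{prop:12}, and the formula for $\eta^\ell$ from Proposition~\ref{prop:13}.

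First I would recall that, by Lemma~\ref{lem:induce}, the functor $\operatorname{Ind}(\eta^\ell)$ sends a right $A$-module $(M, r_M)$ to the $A$-module with the same underlying object $M$ and new action $r_M \circ (\operatorname{id}_M \otimes \eta^\ell)$. Applied to the simple $(X,\chi)$, whose underlying object is $M_X = \bigoplus_{j \in \mathbb{Z}_{|[X]|}} X \otimes g^j$, this means I only need to identify the pair $(X', \chi')$ whose action $r^{\chi'}$ equals $r^{\chi} \circ (\operatorname{id}_{M_X} \otimes \eta^\ell)$.

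Next I would compute this new action component-by-component. By Proposition~\ref{prop:13}, $\eta^\ell$ acts on the summand $g^i \subset A$ as the scalar $\ell(i)$. Hence on the $(j,i)$-component we have
\[
\bigl(r^{\chi} \circ (\operatorname{id}_{M_X} \otimes \eta^\ell)\bigr)_{j,i} \;=\; r^{\chi}_{j,i} \circ \bigl(\operatorname{id}_{X \otimes g^j} \otimes \ell(i)\operatorname{id}_{g^i}\bigr) \;=\; \ell(i)\,r^{\chi}_{j,i}.
\]
Using the explicit formula from Proposition~\ref{prop:12}, $r^\chi_{j,i} = \chi(i)\cdot (\text{canonical trivalent composite})$, so the scalar in front of the canonical morphism becomes $\ell(i)\chi(i) = (\ell \cdot \chi)(i)$. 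This is precisely the defining formula for $r^{\ell\cdot\chi}_{j,i}$, so the new action coincides with $r^{\ell\cdot\chi}$ on the nose.

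There is no significant obstacle here: once the right module action is identified as $r^{\ell\cdot\chi}$, the isomorphism class of $\operatorname{Ind}(\eta^\ell)(X,\chi)$ is, by the classification in Proposition~\ref{prop:12}, the pair $(X, \ell\cdot\chi)$. The only minor care needed is to match conventions in Lemma~\ref{lem:induce} (where $\phi$ enters through pre-composition on the $A$-factor) with the tensor-normalisation used to define $r^\chi$, so that the scalars $\ell(i)$ and $\chi(i)$ genuinely multiply rather than combine through some additional coboundary. Since both are diagonal in the $A = \bigoplus_i g^i$ decomposition and the trivalent vertices of the pointed subcategory were chosen with trivial $6j$-symbols, this check is straightforward and completes the proof.
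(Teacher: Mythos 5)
Your proof is correct and takes the same approach as the paper, which simply cites the three ingredients (Lemma~\ref{lem:induce}, Proposition~\ref{prop:12}, Proposition~\ref{prop:13}) and declares the result a direct computation. You have merely spelled out the component-wise scalar bookkeeping that the paper leaves implicit, and your convention check at the end is exactly the point one should verify.
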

\begin{proof}
    This follows directly from the classification of simple objects of $\mathcal{C}_A$ from Proposition~\ref{prop:12}, the explicit description of $\operatorname{Aut}(A)$ from Proposition~\ref{prop:13}, and the definition of the map $\operatorname{Ind}$ from Lemma~\ref{lem:induce}.
\end{proof}

\subsection{Indecomposable module categories from \textit{\'etale} algebra objects}\label{sub:modinv}

Given an \textit{\'etale} algebra object in a braided tensor category $\mathcal{C}$, the category $\mathcal{C}_A$ is an indecomposable module category over $\mathcal{C}$. However, not all indecomposable $\mathcal{C}$ module categories are of this form. Instead, one has to consider \textit{Lagrangian} algebras in the Drinfeld centre of $\mathcal{C}$ to obtain the full picture.

\begin{defn}\cite[Definition 4.6]{LagrangeUkraine}
    Let $\mathcal{C}$ be a modular tensor category. An algebra $A$ is called \textit{Lagrangian} if it is connected \textit{\'etale}, and if $ \operatorname{ FPDim}(A)^2 = \operatorname{ FPDim}(\mathcal{C})$.
\end{defn}

We then have the following bijection. It should be noted that for most examples of modular tensor categories, this bijection is for all intents and purposes non-constructive.

\begin{thm}\cite[Proposition 4.8]{LagrangeUkraine}
Let $\mathcal{C}$ be a modular tensor category. Then there is a bijection between indecomposable $\mathcal{C}$ module categories, and Lagrangian algebras in $\mathcal{Z}(\mathcal{C})$. This bijection sends
\[   \mathcal{M} \mapsto \mathcal{Z}(\mathcal{M})          \]
where $\mathcal{Z}(\mathcal{M})\in \mathcal{Z}(\mathcal{C})$ is the full centre of $\mathcal{M}$ \cite{LagrangeGerman,davy}.
\end{thm}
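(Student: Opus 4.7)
The plan is to reduce to Ostrik's classification and then bootstrap to the Drinfeld centre via the full centre construction. The starting point is Ostrik's theorem: indecomposable $\mathcal{C}$-module categories are in bijection with Morita classes of indecomposable separable algebras $A \in \mathcal{C}$, with the correspondence given by $\mathcal{M} \simeq \mathcal{C}_A$ where $A = \underline{\operatorname{End}}(M)$ for any generator $M \in \mathcal{M}$. Hence the theorem reduces to showing that the full centre $A \mapsto Z(A)$ descends to a bijection between Morita classes of such algebras in $\mathcal{C}$ and Lagrangian algebras in $\mathcal{Z}(\mathcal{C})$.

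Next I would verify that $Z(A)$ is Lagrangian. One defines $Z(A) \in \mathcal{Z}(\mathcal{C})$ by its universal property as the terminal object among pairs $(Z, \lambda_Z)$ with $Z \in \mathcal{Z}(\mathcal{C})$ and $\lambda_Z \colon Z \otimes A \to A$ a morphism in $\mathcal{C}$ compatible with the multiplication on $A$ and the half-braiding on $Z$. This universal property automatically endows $Z(A)$ with the structure of a connected commutative algebra; separability is inherited from the separability of $A$, so $Z(A)$ is \textit{\'etale}. The Lagrangian condition $\operatorname{FPdim}(Z(A))^2 = \operatorname{FPdim}(\mathcal{Z}(\mathcal{C}))$ then follows from M\"uger's identity $\operatorname{FPdim}(\mathcal{Z}(\mathcal{C})) = \operatorname{FPdim}(\mathcal{C})^2$ together with the relation $\operatorname{FPdim}(Z(A)) = \operatorname{FPdim}(\mathcal{C})$, which can be deduced by comparing dimensions in the equivalence $\mathcal{Z}(\mathcal{C})_{Z(A)} \simeq \mathcal{C}_A \boxtimes_{\mathcal{C}} \mathcal{C}_A^{\mathrm{op}}$.

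Morita invariance of $Z(-)$ is immediate once one observes that the defining universal property depends only on the module category $\mathcal{C}_A$ and not on the choice of generator. For the inverse direction, given a Lagrangian $L \in \mathcal{Z}(\mathcal{C})$, one forms $\mathcal{Z}(\mathcal{C})_L$, which becomes a $\mathcal{C}$-module category via the canonical central embedding $\mathcal{C} \hookrightarrow \mathcal{Z}(\mathcal{C})$; the Lagrangian condition forces $\mathcal{Z}(\mathcal{C})^0_L \simeq \operatorname{Vec}$, and this in turn forces $\mathcal{Z}(\mathcal{C})_L$ to be indecomposable as a $\mathcal{C}$-module.

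The main obstacle is showing that the two constructions are mutually inverse, i.e.\ that starting from $\mathcal{M}$ and forming $\mathcal{Z}(\mathcal{M})$ then $\mathcal{Z}(\mathcal{C})_{\mathcal{Z}(\mathcal{M})}$ recovers $\mathcal{M}$ (and vice versa for the Lagrangian side). The natural candidate comparison maps come from the respective universal properties, but concluding they are equivalences typically requires the Lagrangian dimension estimate together with fully faithfulness arguments of the induction/restriction functors along $\mathcal{C} \to \mathcal{Z}(\mathcal{C})$. Since this result is cited as \cite[Proposition~4.8]{LagrangeUkraine} and its proof appears there in detail, I would ultimately invoke that reference rather than reproducing the full verification.
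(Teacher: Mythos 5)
The paper does not prove this statement; it is quoted directly as \cite[Proposition 4.8]{LagrangeUkraine} with no argument given. Your sketch correctly identifies the standard ingredients (Ostrik's bijection with Morita classes of separable algebras, the universal property of the full centre and its Morita invariance, the Lagrangian dimension count, and the inverse construction via $\mathcal{Z}(\mathcal{C})_L$), and you appropriately end by invoking the cited reference — which is exactly what the paper does.
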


From the full centre of a module category $\mathcal{M}$, we can define the modular invariant of $\mathcal{M}$. The modular invariant encodes the combinatorial data of the full centre.

\begin{defn}
    Let $\mathcal{C}$ be a modular tensor category, and $\mathcal{M}$ a $\mathcal{C}$-module category. We define the \textit{modular invariant} of $\mathcal{M}$ as the $|\operatorname{Irr}(\mathcal{C})|\times|\operatorname{Irr}(\mathcal{C})|$ square matrix: 
    \[    [  \mathcal{Z}(\mathcal{M})    ]_{X,Y} := \operatorname{dimHom}_{ \mathcal{C}\boxtimes\mathcal{C}^\text{rev}    }(X\boxtimes Y \to      \mathcal{Z}(\mathcal{M}) ) .  \]
\end{defn}

In any braided tensor category $\mathcal{C}$, one can take the relative tensor product of two $\mathcal{C}$-module categories using the braiding. Details can be found in \cite[Section 3.4]{homo}. We will write $\mathcal{M}_1 \boxtimes_\mathcal{C} \mathcal{M}_2$ for this relative tensor product. The relative tensor product is rather difficult to compute in practice.

The modular invariant matrix $[ \mathcal{Z}(\mathcal{M})]$ can be thought of as a kind of ``character'' of the module category $\mathcal{M}$. In particular, this matrix plays nice with relative tensor product and direct sums of $\mathcal{C}$-module categories. Indeed from \cite[Proposition 5.3]{fuch} we have
\begin{align*}
    [Z(\mathcal{M}_1 \boxplus \mathcal{M}_1    )]  &=[Z(\mathcal{M}_1)] + [Z(\mathcal{M}_2)]\\
    [Z(\mathcal{M}_1 \boxtimes_{\mathcal{C}}\mathcal{M}_1    )]  &=[Z(\mathcal{M}_1)] \cdot  [Z(\mathcal{M}_2)],
\end{align*}
where $\cdot$ represents matrix multiplication. In the case where the modular invariant matrices are linearly independent over $\mathbb{Z}$, we can use the above formulae to find the relative tensor product of module categories over $\mathcal{C}$ in the same manner as one uses characters of Lie algebra representations to determine their tensor product decompositions. It should be pointed out that it is possible for distinct module categories to have the same modular invariant. Several examples will be seen later in this paper. Hence the modular invariants do not provide a universal method of deducing relative tensor product rules.

While it may seem difficult to classify Lagrangian algebras in $\mathcal{Z}(\mathcal{C})$ at first glance, it turns out this can be done explicitly by studying the \textit{\'etale} algebras in $\mathcal{C}$, and the braided equivalences between their categories of local modules.

\begin{thm}\label{thm:nik}\cite[Corollary 3.8]{triples}
Let $\mathcal{C}$ be a modular tensor category. There is a bijection between Lagrangian algebras in $\mathcal{Z}(\mathcal{C})$, and triples $(A_1, \mathcal{F}, A_2)$, where $A_1,A_2\in \mathcal{C}$ are \textit{\'etale} algebra objects, and $\mathcal{F}$ is a braided equivalence $\mathcal{C}^0_{A_1} \to \mathcal{C}^0_{A_2}$. Two triples $(A_1, \mathcal{F}_A, A_2)$ and $(B_1, \mathcal{F}_B, B_2)$ are considered equivalent if there exist algebra isomorphisms $\psi_1:A_1 \to B_1$ and $\psi_2:A_2 \to B_2$ such that the following diagram commutes:
\[\begin{tikzcd}
\mathcal{C}^0_{A_1}  \arrow[r, "\mathcal{F}_A"]   & \mathcal{C}^0_{A_2}  \\
    \mathcal{C}^0_{B_1} \arrow[u, "\operatorname{Ind}(\psi_1)"] \arrow[r, "\mathcal{F}_B"]&  \mathcal{C}^0_{B_2}\arrow[u,swap, "\operatorname{Ind}(\psi_2)"]
\end{tikzcd}\]
where $\operatorname{Ind}(\psi_i)$ are the braided equivalences $\mathcal{C}^0_{B_i} \to \mathcal{C}^0_{A_i}$ induced from the algebra isomorphisms $\psi_i: A_i \to B_i$. Explicitly, the Lagrangian algebra in $\mathcal{Z}(\mathcal{C})$ corresponding to the triple $(A_1, \mathcal{F}, A_2)$ is given by 
 \[\left(\operatorname{For}_{A_1}\boxtimes (\mathcal{F}\circ \operatorname{For}_{A_2})\right)\left(I\left(\mathbf{1}_{\mathcal{C}_{A_1}^0}\right)\right) \in \mathcal{C}\boxtimes \mathcal{C}^{rev} \simeq \mathcal{Z}(\mathcal{C}).\]
 Here $I: \mathcal{C}_{A_1}^0 \to  \mathcal{C}_{A_1}^0\boxtimes \left( \mathcal{C}_{A_1}^0\right)^{rev}$ is the induction functor, and $\operatorname{For}_{A_i}: \mathcal{C}_{A_i}^0 \to \mathcal{C}$ are the forgetful functors.
\end{thm}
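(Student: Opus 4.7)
The plan is to construct the bijection by giving explicit maps in both directions, using modularity of $\mathcal{C}$ to work inside the factorization $\mathcal{Z}(\mathcal{C}) \simeq \mathcal{C}\boxtimes\mathcal{C}^{\mathrm{rev}}$. The forward map from triples to Lagrangian algebras is already written down in the statement, so the main work is to check that the formula yields a Lagrangian algebra, that every Lagrangian algebra arises this way, and that the claimed equivalence relation on triples is exactly the preimage of equality of Lagrangian algebras.

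For the forward direction I would start from the observation that, since $A_1$ is \textit{\'etale} in the modular category $\mathcal{C}$, the local module category $\mathcal{C}_{A_1}^0$ is itself modular (Kirillov--Ostrik), so its Drinfeld centre splits as $\mathcal{C}_{A_1}^0\boxtimes(\mathcal{C}_{A_1}^0)^{\mathrm{rev}}$. The induction functor $I$ sends $\mathbf{1}$ to the regular coend $\bigoplus_X X\boxtimes X^*$ summed over simples of $\mathcal{C}_{A_1}^0$, which is the canonical Lagrangian algebra of the centre. Applying the braided tensor functor $G := \operatorname{For}_{A_1}\boxtimes(\operatorname{For}_{A_2}\circ\mathcal{F})$ preserves algebra structure and commutativity, since each forgetful functor is lax monoidal and braided and $\mathcal{F}$ is a braided equivalence. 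Connectedness follows because the $\mathbf{1}\boxtimes\mathbf{1}$-isotypic component of $I(\mathbf{1})$ is simple and its image under $G$ is $A_1\boxtimes A_2$, which contains $\mathbf{1}\boxtimes\mathbf{1}$ with multiplicity one. For the Lagrangian dimension I would use the identity $\operatorname{FPdim}(\mathcal{C}_{A_i}^0)\cdot\operatorname{FPdim}(A_i)^2 = \operatorname{FPdim}(\mathcal{C})$ together with $\operatorname{FPdim}(A_1)=\operatorname{FPdim}(A_2)$ (forced by $\mathcal{F}$ being an equivalence) to conclude $\operatorname{FPdim}(G(I(\mathbf{1}))) = \operatorname{FPdim}(\mathcal{C})$.

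For the reverse direction I would pass through the bijection between Lagrangian algebras and indecomposable $\mathcal{C}$-module categories. Given a Lagrangian $L$, let $\mathcal{M}$ be the associated module category. Invertibility of $\mathcal{M}$ as a Morita datum makes $\mathcal{M}$ naturally a $\mathcal{C}$--$\mathcal{C}^{\mathrm{rev}}$ bimodule category. Choosing a simple $M\in\mathcal{M}$, the internal End with respect to the left $\mathcal{C}$-action produces an \textit{\'etale} algebra $A_1\in\mathcal{C}$ and an equivalence $\mathcal{M}\simeq\mathcal{C}_{A_1}$; repeating with the right action produces $A_2$. The dual category of $\mathcal{C}_{A_i}$ as a $\mathcal{C}$-module is (up to reversal) $\mathcal{C}_{A_i}^0$, so identifying the two dual categories attached to $\mathcal{M}$ via the bimodule structure furnishes the required braided equivalence $\mathcal{F}\colon\mathcal{C}_{A_1}^0\to\mathcal{C}_{A_2}^0$. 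Substituting this data into the forward formula and using the coend presentation of $I(\mathbf{1})$ then recovers $L$.

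The equivalence of triples is a direct check. Given $\psi_i\colon A_i\to B_i$ making the square commute, Corollary~\ref{cor:ind} gives $\operatorname{For}_{A_i}\circ\operatorname{Ind}(\psi_i)\cong\operatorname{For}_{B_i}$, which intertwines the two specifications of $G$ so the Lagrangian algebras coincide. Conversely, an isomorphism of Lagrangian algebras restricts on the appropriate isotypic components to isomorphisms of the extracted \textit{\'etale} subalgebras, producing the commuting square up to the built-in ambiguity encoded by the induction action. The step I expect to be the main obstacle is the Lagrangian dimension verification in the forward direction: one must carefully track how each forgetful functor inflates Frobenius--Perron dimension by $\operatorname{FPdim}(A_i)$ while passage to local modules deflates it by $\operatorname{FPdim}(A_i)^2$, and show these cancel to land exactly on $\operatorname{FPdim}(\mathcal{C})$ rather than a proper multiple or divisor of it.
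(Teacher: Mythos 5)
This theorem is not proved in the paper: it is stated verbatim as a citation to \cite[Corollary 3.8]{triples}, and the paper treats it as a black box. There is therefore no ``paper's own proof'' to compare your argument against; the most one can say is whether your sketch is a faithful reconstruction of the kind of argument that lives in the cited reference.

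On that score, your forward direction is essentially right and is the standard argument: $\mathcal{C}_{A_1}^0$ is modular by Kirillov--Ostrik, $I(\mathbf{1})$ is its canonical (regular) Lagrangian, and pushing forward along a braided lax monoidal functor preserves connected \'etale structure, with the Lagrangian condition reduced to the FPdim bookkeeping you describe. One thing worth flagging is that you need $\operatorname{For}_{A_2}\circ\mathcal{F}$ to land in $\mathcal{C}^{\mathrm{rev}}$ as a braided functor, which requires tracking orientations carefully through the reversal in $I$; this is routine but easy to get backwards. Your reverse direction is where the reconstruction drifts. Not every indecomposable $\mathcal{C}$-module category is invertible, and the $\mathcal{C}$--$\mathcal{C}^{\mathrm{rev}}$ bimodule structure you invoke is not automatic from ``invertibility as a Morita datum.'' The route actually taken in the literature is more direct: write the Lagrangian $L\in\mathcal{C}\boxtimes\mathcal{C}^{\mathrm{rev}}$, extract $A_1$ as the $\mathbf{1}$-isotypic part in the second factor and $A_2$ as the $\mathbf{1}$-isotypic part in the first, verify these are connected \'etale, and then show that $L$ itself, viewed inside $\mathcal{C}_{A_1}^0\boxtimes(\mathcal{C}_{A_2}^0)^{\mathrm{rev}}$, is the graph of a braided equivalence $\mathcal{F}$. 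That construction is also what makes the equivalence-of-triples statement transparent: an algebra isomorphism of Lagrangians restricts to isomorphisms $\psi_i:A_i\to B_i$ of these distinguished subalgebras, and Corollary~\ref{cor:ind} then gives the commuting square. Your version gestures at the same conclusion but would need to be tightened before it could be called a proof.
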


\begin{rmk}\label{rmk:same}
In the case that $A_1 = A_2 = A$, we have that two triples $(A, \mathcal{F}_1, A)$ and $(A, \mathcal{F}_2, A)$ are equivalent if and only if $\mathcal{F}_1$ and $\mathcal{F}_2$ lie in the same double coset given in Definition~\ref{def:act}.
\end{rmk}
\begin{rmk}
    The above theorem allows the explicit computation of the modular invariant of the module category corresponding to the triple $(A_1, \mathcal{F}, A_2)$. Let $M_1$ and $M_2$ be the matrix of branching rules for the \textit{\'etale} algebra objects $A_1$ and $A_2$, and $F$ the matrix representing the combinatorics of the equivalence $\mathcal{F}$. The modular invariant of the module category is then given by 
    \[    M_1 \cdot F \cdot M_2^T.     \]
\end{rmk}

\subsection{The categories $\mathcal{C}(\mathfrak{g}, k)$}\label{sec:Cgk}

In this subsection we briefly review key information on the modular tensor categories $\mathcal{C}(\mathfrak{g}, k)$ where $\mathfrak{g}$ is a simple Lie algebra, and $k\in \mathbb{N}$. These are the categories of level-$k$ integrable representations of $\widehat{\mathfrak{g}}$ \cite{KacBook}. There exists a tensor product on this category given by level preserving fusion. With this tensor product, the category $\mathcal{C}(\mathfrak{g}, k)$ has the structure of a modular tensor category. We will present the relevant combinatorics for this paper. These results are taken from \cite[Subsection 2.1]{Gannon}.

We begin by describing the simple objects in this category. Let $\mathfrak{g}$ be a simple Lie algebra, and $\Gamma_\mathfrak{g}$ the associated Dynkin diagram. The simple objects of $\mathcal{C}(\mathfrak{g}, k)$ can be described in terms of the fundamental weights $\Lambda_i$ of $\mathfrak{g}$. These fundamental weights are in bijection with the vertices of $\Gamma_\mathfrak{g}$. We define the set
\[ P_+^k(\mathfrak{g}) := \left\{  \sum_{i=1}^{|\Gamma_\mathfrak{g}|  }  \lambda_i \Lambda_i  : \lambda_i \in \mathbb{N}, \quad \sum_{i=1}^{|\Gamma_\mathfrak{g}|  } a_i^\vee \lambda_i \leq k   \right\}  . \]
Here the integers $a_i^\vee$ are the \textit{co-labels} of $\mathfrak{g}$, which are associated to the fundamental weights. We list these for the relevant Lie algebras in this paper:
\begin{alignat*}{3}
    \mathfrak{sl}_N: &\quad \att{an}{.5} \qquad \qquad \qquad
      &\mathfrak{so}_{2N+1}: &\quad\att{bn}{.5}\\
  \mathfrak{sp}_{2N}: &\quad\att{cn}{.5}  &\mathfrak{so}_{2N}: &\quad\att{dn}{.5}\\
    \mathfrak{e}_{6}: &\quad\att{e6}{.5}
    &\mathfrak{e}_{7}: &\quad\att{e7}{.5}
\end{alignat*}
We then have a bijection between $P_+^k(\mathfrak{g})$ and the simple objects of $\mathcal{C}(\mathfrak{g}, k)$ sending $\lambda \mapsto V^{\lambda}$.

In the special case of $\mathfrak{g} = \mathfrak{sl}_N$, we have a bijection between the set $P_+^k(\mathfrak{sl}_N)$, and the set of Young diagrams which fit in an $(N-1)\times k$ box. This bijection is given by sending a Young diagram $\mu$ to the weight
\[     \sum_{i=1}^{N-1  }  (\mu_i - \mu_{i+1}) \Lambda_i.           \]
For a Young diagram $\mu$ we will write $V_\mu$ for the corresponding simple object of $\mathcal{C}(\mathfrak{sl}_N, k)$.

The category $\mathcal{C}(\mathfrak{sl}_N, k)$ is $\mathbb{Z}_N$-graded in the sense of \cite[Section 2.3]{homo}. We have that a simple object $V^\lambda$ lives in the 
\[   t(\lambda) := \sum_{i= 1}^{N-1} i \lambda_i     \]
graded piece, modulo $N$. In terms of Young diagrams, the quantity $t(\lambda)$ is the number of boxes in the corresponding Young diagram.

The category $\mathcal{C}(\mathfrak{sl}_N, k)$ has the distinguished set of (isomorphism classes of) invertible objects. These have been classified in \cite{currents}, and are the objects
\[    \operatorname{Inv}(\mathcal{C}(\mathfrak{sl}_N, k))   = \{   V_{[k^j]} : 0 \leq j < N \}.    \]
These objects have the following fusion rules
\[  V_{[k^{j_1}]} \otimes V_{[k^{j_2}]} \cong V_{[k^{j_1+j_2 \pmod N}]},      \]
and hence form a group isomorphic to $\mathbb{Z}_N$. To describe the action of the invertibles on the simple objects of $\mathcal{C}(\mathfrak{sl}_N, k)$, we introduce the following definition.
\begin{defn}\label{def:tau}
    Let $N,k \in \mathbb{N}$, and $\lambda$ a Young diagram which fits in an $(N-1) \times k$ box. We define $\tau(\lambda)$ as the Young diagram obtained by adding a row with $k$ boxes, to the top of $\lambda$, and then deleting any columns which contain $N$ boxes.
\end{defn}
We then have that
\[    V_{[k]} \otimes V_{\lambda} \cong V_{\tau(\lambda)} .    \]
It will be useful to observe that
\[t(\tau^j(\lambda))=k  j+t(\lambda) \pmod N.\]

\subsection{\textit{\'Etale} algebra objects in $\mathcal{C}(\mathfrak{sl}_N,k)$}

In this subsection we review several of the sources of \textit{\'etale} algebra objects in the categories $\mathcal{C}(\mathfrak{sl}_N,k)$. These have been fully classified up to $N=7$ in \cite{Gannon,newGan}. The main constructions of these algebras are through pointed subcategories and conformal embeddings. However there do exist exotic examples not related to either of these constructions e.g. \cite{A67}.

We first discuss the pointed \textit{\'etale} algebra objects in $\mathcal{C}(\mathfrak{sl}_N,k)$. 

The category $\mathcal{C}(\mathfrak{sl}_N,k)$ contains the distinguished subcategory $\mathcal{C}(\mathfrak{sl}_N,k)^{pt}$ generated by the invertible objects $V_{[k^j]}$ where $0 \leq j <N$. Explicit formulae for the braiding on this subcategory can be deduced from \cite[Equation 2.4]{Gannon},\cite[Section 2.5]{quinn} and implicit in \cite{KacBook}. From these formulae, it is seen that for $m$ a divisor of $N$, the subcategory generated by the invertibles
\[  \left \{ V_{\left[k^{\frac{N}{m} j}\right]} : 0 \leq j < m  \right\}      \]
is braided equivalent to $\operatorname{Rep}(\mathbb{Z}_m)$ if and only if $m^2 \mid Nk$ if $N$ is odd, and $2m^2 \mid Nk$ if $N$ is even. This implies the following result on the  pointed \textit{\'etale} algebra objects in $\mathcal{C}(\mathfrak{sl}_N,k)$.

\begin{prop}\label{prop:etpt}
    Let $N\in \mathbb{N}_{\geq 2}$ and $k\in \mathbb{N}_{\geq 1}$. Then there is a bijection between divisors $m$ of $N$ satisfying $m^2 \mid Nk$ if $N$ is odd, and $2m^2 \mid Nk$ if $N$ is even, and pointed \textit{\'etale} algebra objects $A\in \mathcal{C}(\mathfrak{sl}_N,k)$. This bijection sends such a divisor $m$ to the object
    \[    A_m := \bigoplus_{0 \leq j < m} V_{\tau^{\frac{N}{m} j     }(\emptyset)}.  \]
\end{prop}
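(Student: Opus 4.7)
The plan is to reduce the classification of pointed \textit{\'etale} algebra objects in $\mathcal{C}(\mathfrak{sl}_N,k)$ to the classification of connected \textit{\'etale} algebras in its pointed subcategory $\mathcal{C}(\mathfrak{sl}_N,k)^{pt}$, and then leverage the braiding formulas that the paragraph preceding the proposition has already summarised. Since the underlying object of a pointed algebra is a direct sum of invertibles, every pointed \textit{\'etale} algebra automatically lies in $\mathcal{C}(\mathfrak{sl}_N,k)^{pt}$, which is a pointed modular tensor category on the cyclic group $\operatorname{Inv}(\mathcal{C}(\mathfrak{sl}_N,k)) \cong \mathbb{Z}_N$. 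So the problem becomes: classify the connected \textit{\'etale} algebras in this pointed braided category.

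First I would recall the standard fact that in a pointed braided fusion category on a finite abelian group $G$, a connected \textit{\'etale} algebra corresponds to a subgroup $H \leq G$ on which the braiding is symmetric and the associated quadratic form is trivial, i.e.\ such that the full subcategory generated by $H$ is braided equivalent to $\operatorname{Rep}(H)$; the algebra is then the regular algebra $\bigoplus_{h \in H} h$, and this étale structure is unique up to isomorphism (since Tannakian pointed symmetric fusion categories have a unique connected \textit{\'etale} algebra, the group algebra). The subgroups of $\mathbb{Z}_N$ are parametrised by divisors $m \mid N$, with the subgroup of order $m$ generated by $V_{[k^{N/m}]}$.

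Next I would invoke the preceding paragraph of the paper, which records exactly when the subcategory of $\mathcal{C}(\mathfrak{sl}_N,k)^{pt}$ generated by $\{V_{[k^{Nj/m}]} : 0 \leq j < m\}$ is braided equivalent to $\operatorname{Rep}(\mathbb{Z}_m)$: namely, $m^2 \mid Nk$ when $N$ is odd and $2m^2 \mid Nk$ when $N$ is even. Combining this with the preceding paragraph gives a bijection between divisors $m$ of $N$ satisfying the stated divisibility and (isomorphism classes of) pointed \textit{\'etale} algebras.

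Finally, I would identify the underlying object of the algebra corresponding to $m$: by construction it is $\bigoplus_{0 \leq j < m} V_{[k^{Nj/m}]}$, and applying the action of the simple currents recalled in Subsection~\ref{sec:Cgk} (namely $V_{[k]} \otimes V_\lambda \cong V_{\tau(\lambda)}$ iterated $Nj/m$ times, starting from $\lambda = \emptyset$) rewrites this as $\bigoplus_{0 \leq j < m} V_{\tau^{Nj/m}(\emptyset)} = A_m$. The only real checkable content is the numerical braiding/quadratic-form condition, which is the obstacle already addressed in the cited braiding formulas \cite[Equation 2.4]{Gannon} and \cite[Section 2.5]{quinn}; everything else is a formal consequence of the structure of pointed braided fusion categories.
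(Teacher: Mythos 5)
The paper gives no explicit proof of this proposition — it is stated as an immediate consequence of the preceding paragraph, which records when the pointed subcategory generated by $\{V_{[k^{Nj/m}]}\}$ is Tannakian. Your proposal correctly fleshes out exactly that argument: reduce to the cyclic pointed subcategory, invoke the standard correspondence between connected \'etale algebras in a pointed braided category and Tannakian subgroups, parametrise subgroups of $\mathbb{Z}_N$ by divisors, and read off the divisibility condition from the braiding formulas the paper cites; so it takes essentially the same route as the paper, just spelled out.
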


Our second class of \textit{\'etale} algebra objects in $\mathcal{C}(\mathfrak{sl}_N,k)$ comes from the theory of conformal embeddings. This construction typically gives non-pointed algebra objects, apart from some specific cases. Let $\mathfrak{g}$ be a simple Lie algebra. For us, a conformal embedding is an embedding of WZW vertex operator algebras \cite{WZW}
\[  \mathcal{V}(\mathfrak{sl}_N, k) \subseteq  \mathcal{V}(\mathfrak{g}, 1)       \]
such that the restriction of the trivial $\mathcal{V}(\mathfrak{g}, 1)$ module to a $\mathcal{V}(\mathfrak{sl}_N, k)$ module decomposes as a finite direct sum of irreducible modules (equivalently an inclusion of the Lie algebras $\mathfrak{sl}_N\subseteq \mathfrak{g}$ with equal central charges $\frac{k(N^2-1)}{k+N}=\frac{dim(\mathfrak{g})}{h^\vee+1}$). A complete list of such inclusions can be found in \cite{LagrangeUkraine}. These inclusions were first found in \cite{Embeddings1,Embeddings2}. It is shown in \cite{Kril,HKL} that $\mathcal{V}(\mathfrak{g}, 1)$ has the structure of an \textit{\'etale} algebra object in $\operatorname{Rep}(   \mathcal{V}(\mathfrak{sl}_N, k)   )$. The category $\operatorname{Rep}(   \mathcal{V}(\mathfrak{sl}_N, k)   )$ is identified with $  \mathcal{C}(\mathfrak{sl}_N, k)$ by \cite{WZW}, hence giving us our construction for \textit{\'etale} algebra objects in these categories. 

\begin{defn}
    For a conformal embedding $\mathcal{V}(\mathfrak{sl}_N, k) \subseteq  \mathcal{V}(\mathfrak{g}, 1) $ we will write $A_{\mathfrak{g}}$ for the corresponding \textit{\'etale} algebra object in $  \mathcal{C}(\mathfrak{sl}_N, k)$ from the above construction.
\end{defn}

For the \textit{\'etale} algebras $A_\mathfrak{g}$, the structure of the local $A_\mathfrak{g}$-modules is fully understood. From \cite[Theorem 5.2]{Kril} we have
\[\mathcal{C}(\mathfrak{sl}_N,k)_{A_\mathfrak{g}}^0 \simeq \mathcal{C}(\mathfrak{g},1).\]

The restriction of the forgetful functor $\operatorname{For} : \mathcal{C}(\mathfrak{sl}_N,k)_{A_\mathfrak{g}}\to \mathcal{C}(\mathfrak{sl}_N,k)$ to the distinguished subcategory $\mathcal{C}(\mathfrak{sl}_N,k)_{A_\mathfrak{g}}^0 \simeq \mathcal{C}(\mathfrak{g},1)$ has been well studied in the literature under the name of branching rules. The data of these forgetful functors will be required later in this paper to apply Corollary~\ref{cor:ind}. Thus we review the data of these functors for the conformal embeddings relevant to this paper.

\subsubsection{The embedding $\mathcal{V}(\mathfrak{sl}_N, N+2) \subset \mathcal{V}(\mathfrak{sl}_{\frac{N(N+1)}{2}}, 1)$}\label{subsub:n+2}

For this conformal embedding, we have that \[\mathcal{C}(\mathfrak{sl}_N, N+2)_{A_{\mathfrak{sl}_{\frac{N(N+1)}{2}}}}^0 \simeq \mathcal{C}\left(\mathfrak{sl}_{\frac{N(N+1)}{2}}, 1\right).\] The simple objects of $\mathcal{C}(\mathfrak{sl}_{\frac{N(N+1)}{2}}, 1)$ are $\left\{V_{\Lambda_i} : 0 \leq i  < \frac{N(N+1)}{2}\right\}$. The combinatorial data of the forgetful functor $\operatorname{For}:\mathcal{C}\left(\mathfrak{sl}_{\frac{N(N+1)}{2}}, 1\right)\to \mathcal{C}(\mathfrak{sl}_{N}, N+2)$ was determined in \cite[Section 1]{branchA}. We summarise their results.

 We define the distinguished vector $\rho = (N,N-1, \cdots, 1)$. For $s \in \{-1,1\}^N$ we define $\sigma_s(\rho)$ as the unique reordering of 
\[   (s_1\times N, s_2 \times (N-1), \cdots, s_N\times 1)       \]
which leaves the vector strictly decreasing. It follows that $\sigma_s(\rho) - \rho$ is a decreasing vector, and hence can be identified with a Young diagram. Let $X$ be the set of pairs $(s,k)$ where $s \in  \{-1,1\}^N$, and $\ell\in \mathbb{Z}$ such that $0 \leq \ell <N$. For $(s,\ell) \in X$ we define
\[ j(s,\ell) := \ell(N+1) + \sum_{i: s_i = 1} N+1 - i. \]

We then have
\[ \dim \operatorname{Hom}(\operatorname{For}(V^{\Lambda_i}), V_\lambda) = \begin{cases}
    1 & \quad \text{ if $\lambda =\tau^\ell( \sigma_s(\rho) - \rho)$ for some $(s,\ell) \in X$ such that $ j(s,\ell)\equiv i \pmod {\frac{N(N+1)}{2}}$}\\
    0 & \quad \text{otherwise.}
\end{cases}
\]
Here $\tau^\ell$ represents $\ell$ applications of the function $\tau$ from Definition~\ref{def:tau}.

\subsubsection{The embedding $\mathcal{V}(\mathfrak{sl}_N, N-2) \subset \mathcal{V}(\mathfrak{sl}_{\frac{N(N-1)}{2}}, 1)$}\label{subsub:n-2}
This conformal embedding is similar to the previous example (and is in fact level-rank dual to it as mentioned in \cite[Section 4.3]{MirrorXu}). We have $\mathcal{C}(\mathfrak{sl}_N, N-2)_{A_{\mathfrak{sl}_{\frac{N(N-1)}{2}}}}^0 \simeq \mathcal{C}(\mathfrak{sl}_{\frac{N(N-1)}{2}}, 1)$ in this case. The simple objects of this category are $\left\{V_{\Lambda_i} : 0 \leq i  < \frac{N(N-1)}{2}\right\}$. The following is taken from \cite[Section 2]{branchA}.

We define the distinguished vector $\rho = (N-1,N-2, \cdots,1, 0)$. For $s \in \{-1,1\}^{N-1}$ we define $\sigma_s(\rho)$ as the unique reordering of 
\[   (s_1 \times (N-1), s_2 \times (N-2), \cdots, s_{N-1}\times 1, 0)       \]
which leaves the vector strictly decreasing. It follows that $\sigma_s(\rho) - \rho$ is decreasing, and can be identified with a Young diagram. Let $X$ be the set of pairs $(s,\ell)$ where $s \in  \{-1,1\}^N$ satisfies $\prod_{i=1}^N s_i = 1$, and $\ell\in \mathbb{Z}$ such that $0 \leq \ell <N$. For $(s,\ell) \in X$ we define
\[ j(s,\ell) := 2\ell(N-1) + \sum_{i: s_i = 1} N - i. \]
We then have
\[ \dim \operatorname{Hom}(\operatorname{For}(V^{\Lambda_i}), V_\lambda) = \begin{cases}
    1 & \quad \text{ if $\lambda =\tau^{2\ell}( \sigma_s(\rho) - \rho)$ for some $(s,\ell) \in X$ such that $ j(s,k)\equiv i \pmod {\frac{N(N-1)}{2}}$}\\
    0 & \quad \text{otherwise.}
\end{cases}
\]
\subsubsection{The embedding $\mathcal{V}(\mathfrak{sl}_N, N) \subset \mathcal{V}(\mathfrak{so}_{N^2-1}, 1)$}\label{subsub:n}
The branching rules for the adjoint embedding $\mathfrak{g} \subset \mathfrak{so}_\mathfrak{g}$ for general $\mathfrak{g}$ were determined in \cite{KacBranch}. In the specific case of $\mathfrak{g} = \mathfrak{sl}_N$, a refined formula was found in \cite{pres}. We summarise these results.

For this family, we have $\mathcal{C}(\mathfrak{sl}_N, N)_{A_{\mathfrak{so}_{N^2-1}}}^0 \simeq \mathcal{C}(\mathfrak{so}_{N^2-1}, 1)$. In the case of $N$ even, we have that there are three simple objects in $\mathcal{C}(\mathfrak{so}_{N^2-1}, 1)$, which we label for convenience $\mathbf{1}$, $V_{\Lambda_1}$, and $S$. When $N$ is odd, the category $\mathcal{C}(\mathfrak{so}_{N^2-1}, 1)$ has the four simple objects which we label $\mathbf{1}$, $V_{\Lambda_1}$, $S^+$, and $S^-$.

For a pair of partitions $\lambda, \mu$ with $l(\lambda) + l(\mu) \leq N$, we define the decreasing vector
\[ (\lambda, \mu) := (\lambda_1 , \lambda_2, \cdots ,-\mu_2,  -\mu_1 ). \]
It follows that this vector can be identified with a Young diagram. It was shown in \cite{pres} that 
\[     \operatorname{For}(\mathbf{1}) \cong \bigoplus_{\substack{H_\lambda(1,1) < N \\|\lambda| \text{ even }}} V_{(\lambda, \lambda^T)}\qquad \text{and}\qquad  \operatorname{For}(V) \cong \bigoplus_{\substack{H_\lambda(1,1) < N \\|\lambda| \text{ odd }}} V_{(\lambda, \lambda^T)}.   \]
The decompositions of the spinor representations were explicitly worked out in \cite{KacBranch}. In the case that $N$ is even, we have
\[     \operatorname{For}\left(S \right) \cong 2^{\frac{N-2}{2}} \cdot V_{(N-1, N-2, \cdots, 1)},   \]
and in the case of $N$ odd, we have
\[ \operatorname{For}\left(S^{\pm}\right)\cong 2^{\frac{N-3}{2}} \cdot V_{(N-1, N-2, \cdots, 1)} .  \]

\subsubsection{Sporadic Embeddings}\label{sub:spor} We will also need the branching rules for several sporadic conformal embeddings. Here we will write $[V]_{\mathbb{Z}_N}$ to represent the orbit of the object $V$ under the action of the invertibles of $\mathcal{C}(\mathfrak{sl}_N, k)$. i.e. the direct sum of the objects in the set $\{  \tau^\ell (V) : \ell \in \mathbb{Z}_N\}$.

In the case of the embedding $\mathcal{V}(\mathfrak{sl}_3, 9) \subset\mathcal{V}(\mathfrak{e}_6, 1)$ we have that $\mathcal{C}(\mathfrak{sl}_3, 9)_{A_{\mathfrak{e}_6}}^0$ has three simple objects which we label $\mathbf{1}$, $g$, and $g^2$. The forgetful functor sends:
\[\mathbf{1} \mapsto  [  V_{\emptyset}  ]_{\mathbb{Z}_3} \oplus \left[ V_{\ydiagram{5,1}  }\right]_{\mathbb{Z}_3},\qquad
   g \mapsto \left[  V_{\ydiagram{4,2} }\right]_{\mathbb{Z}_3},\qquad
   g^2 \mapsto  \left[  V_{\ydiagram{4,2} }\right]_{\mathbb{Z}_3}.\]

For the embedding $\mathcal{V}(\mathfrak{sl}_3, 21) \subset\mathcal{V}(\mathfrak{e}_7, 1)$  we have that $\mathcal{C}(\mathfrak{sl}_3, 21)_{A_{\mathfrak{e}_7}}^0$ has two simple objects which we label $\mathbf{1}$ and $g$. The forgetful functor sends:
\[
    \mathbf{1} \mapsto  [   V_{\emptyset}  ]_{\mathbb{Z}_3} \oplus \left[   V_{\ydiagram{8,4}} \right]_{\mathbb{Z}_3},\qquad
    g \mapsto \left[   V_{\ydiagram{5}} \right]_{\mathbb{Z}_3}\oplus  \left[   V_{\ydiagram{5,5}} \right]_{\mathbb{Z}_3} \oplus  \left[   V_{\ydiagram{11,7}} \right]_{\mathbb{Z}_3}\oplus  \left[   V_{\ydiagram{11,4}} \right]_{\mathbb{Z}_3}.
\]

For the embedding $\mathcal{V}(\mathfrak{sl}_4, 8) \subset\mathcal{V}(\mathfrak{so}_{20}, 1)$  we have that $\mathcal{C}(\mathfrak{sl}_4, 9)_{A_{\mathfrak{so}_{20}}}^0$ has four simple objects which we label $\mathbf{1}, V$ and $S^\pm$. The forgetful functor sends
\[
    \mathbf{1} \mapsto  [  V_{\emptyset } ]_{\mathbb{Z}_4} \oplus \left[   V_{\ydiagram{4,3,1}} \right]_{\mathbb{Z}_4},\qquad  V \mapsto \left[   V_{\ydiagram{2,2}} \right]_{\mathbb{Z}_4} \oplus \left[   V_{\ydiagram{5,3}} \right]_{\mathbb{Z}_4},\qquad 
    S^+ \mapsto \left[   V_{\ydiagram{5,2,1}} \right]_{\mathbb{Z}_4} ,\qquad
    S^- \mapsto \left[   V_{\ydiagram{5,2,1}} \right]_{\mathbb{Z}_4}.
\]

For the embedding $\mathcal{V}(\mathfrak{sl}_6, 6) \subset\mathcal{V}(\mathfrak{sp}_{20}, 1)$ we have that $\mathcal{C}(\mathfrak{sl}_6, 6)_{A_{\mathfrak{sp}_{20}}}^0$ has the 11 simple objects $\{V_{\Lambda_i} : 0 \leq i \leq 10\}$. Here we recall the map $\tau$ from Definition~\ref{def:tau}. The forgetful functors sends
\begin{align*}
    V^{\Lambda_0} &\mapsto \left[   V_{\emptyset} \right]_{\mathbb{Z}_3} \oplus \left[   V_{\ydiagram{2,2,2}} \right]_{\mathbb{Z}_3}  \oplus    V_{\ydiagram{4,4,2,2}} & V^{\Lambda_{10}} &\mapsto\tau(\operatorname{For}( V_{\emptyset} ))\\
    V^{\Lambda_1} &\mapsto  \left[   V_{\ydiagram{1,1,1}} \right]_{\mathbb{Z}_3}  \oplus \left[   V_{\ydiagram{3,3,2,1}} \right]_{\mathbb{Z}_3} & V^{\Lambda_{9}} &\mapsto\tau(\operatorname{For}( V_{\Lambda_1} ))\\
    V^{\Lambda_2} &\mapsto  \left[   V_{\ydiagram{2,2,1,1}} \right]_{\mathbb{Z}_3}  \oplus \left[   V_{\ydiagram{6,5,3,3,1}} \right]_{\mathbb{Z}_3}& V^{\Lambda_{8}} &\mapsto\tau(\operatorname{For}(  V_{\Lambda_2} ))\\
    V^{\Lambda_3} &\mapsto  \left[   V_{\ydiagram{3,2,2,2}} \right]_{\mathbb{Z}_3}  \oplus \left[   V_{\ydiagram{3,3,1,1,1}} \right]_{\mathbb{Z}_3}\oplus  V_{\ydiagram{5,4,3,2,1}}& V^{\Lambda_{7}} &\mapsto\tau(\operatorname{For}(  V_{\Lambda_3} ))\\
    V^{\Lambda_4} &\mapsto  \left[   V_{\ydiagram{6,6,3,3}} \right]_{\mathbb{Z}_3}  \oplus \left[   V_{\ydiagram{4,3,2,2,1}} \right]_{\mathbb{Z}_3}\oplus  V_{\ydiagram{6,4,4,2,2}}& V^{\Lambda_{6}} &\mapsto\tau(\operatorname{For}(  V_{\Lambda_4} )).\\
    V^{\Lambda_5} &\mapsto  \left[   V_{\ydiagram{5,3,3,2,2}} \right]_{\mathbb{Z}_3}  \oplus \left[   V_{\ydiagram{5,5,3,2}} \right]_{\mathbb{Z}_3}
\end{align*}

\subsection{Braided autoequivalences of $\mathcal{C}(\mathfrak{sl}_N, k 
  )^0_{A_m}$}\label{sec:braidauto}

  Here we summerise the main result of \cite{ModPt1} which gave a full classification of the braided autoequivalences of the categories $\mathcal{C}(\mathfrak{sl}_N, k 
  )^0_{A}$ when $A$ is a pointed \textit{\'etale} algebra. Recall these are the algebras of the form $A_m$ for $m$ a divisor of $N$ satisfying $m^2 \mid Nk$ if $N$ odd, and $2m^2 \mid Nk$ if $N$ even. The classification result \cite[Theorem 1.4]{ModPt1} states that if 
  \[(N,k,m) \not\in \{ (2,16,2),(3,9,3),(4,8,4), (5,5,5), (8,4,4), (9,3,3), (16,2,2) \}\]
  then we have the isomorphism
  \[       \operatorname{EqBr}\left(\mathcal{C}(\mathfrak{sl}_N, k)_{A_m}^0\right)   \cong \begin{cases}
\{e\} & \text{ if $N=2$ and $k=2$}\\
\mathbb{Z}_{m'}\times \mathbb{Z}_2^{p_m + t_m} & \text{ if either $N=2$ or $k=2$}\\
   D_{m'}\times \mathbb{Z}_2^{p_m + t_m} & \text{ otherwise}\\
   
\end{cases} . \]
Here $p_m$ and $t_m$ are defined as follows.
\begin{defn}\label{def:tp}
Let $m' = \gcd(m,k)$. We define $p_m$ as the number of odd primes dividing $N \frac{m'}{m^2}$ but not $\frac{k}{m'}$, and 
\[  t_m = \begin{cases}
0 & \text{ if $N \frac{m'}{m^2}$ is odd, or if $\frac{k}{m'} \equiv 0 \pmod 4$, or if both $N \frac{m'}{m^2}\equiv 2 \pmod 4$ and $\frac{k}{m'}$ is odd}\\
1 & \text{ otherwise.}
\end{cases}\]
\end{defn}

In order to determine the structure of the double cosets of the group $\operatorname{EqBr}\left(\mathcal{C}(\mathfrak{sl}_N, k)_{A_m}^0\right)$ with respect to the image of $\operatorname{Aut}(A_m)$ from Definition~\ref{def:act}, we need to describe the above isomorphism in some detail.

The $\mathbb{Z}_2^{p_m + t_m}$ factor corresponds to the so-called \textit{simple current autoequivalences} \cite{Simple}. The explicit structure of these autoequivalences is not required for this paper, and so we neglect to include the details here. A curious reader can find the general definition of these braided autoequivalences in \cite[Lemma 2.26]{ModPt1}, and the isomorphism between $\mathbb{Z}_2^{p_m + t_m}$ and these autoequivalences in \cite[Corollary 3.11]{ModPt1}.

The $\mathbb{Z}_{m'}$ factor in the $N=2$ or $k=2$ case, and the $\mathbb{Z}_{m'}$ subgroup of the $D_{m'}$ factor corresponds to the image of the group $\operatorname{Aut}(A_m)\cong \mathbb{Z}_m$ under the map $\operatorname{Ind}: \operatorname{Aut}(A) \to   \operatorname{EqBr}\left(\mathcal{C}_A^0\right)$ from Lemma~\ref{lem:induce}. The isomorphism is given by choosing an isomorphism $\ell_{\underline{\hspace{.3em}}}: \mathbb{Z}_m \to \widehat{\mathbb{Z}_m}$. The braided autoequivalence corresponding to $i \in \mathbb{Z}_m$ is the map defined on objects by 
\[    (V_\lambda, \chi) \mapsto (V_\lambda, \ell_i \cdot \chi).   \]
The kernel of this map is $m' \mathbb{Z}_m$, and so we have the isomorphism with the $\mathbb{Z}_{m'}$ subgroup of $\mathcal{C}(\mathfrak{sl}_N, k 
  )^0_{A_m}$.

The final component to describe is the $\mathbb{Z}_2$ subgroup of the $D_{m'}$ factor. This $\mathbb{Z}_2$ subgroup is generated by the braided autoequivalence which maps
\[ (V_\lambda, \chi) \mapsto (V_\lambda^*, \chi^{-1}).    \]

We also need to describe some of the exceptional cases excluded above. To describe the objects in these categories we again fix a choice of isomorphism $\ell_{\underline{\hspace{.3em}}}: \mathbb{Z}_m \to \widehat{\mathbb{Z}_m}$.

For $\mathcal{C}(\mathfrak{sl}_3, 9 
  )^0_{A_3}$ we have that the braided autoequivalence group is isomorphic to $S_4$. This group is generated by the $D_3$ subgroup described above, along with the exceptional autoequivalence defined on objects by 
  \[     \left( V_{\ydiagram{2,1}} , \ell_0    \right) 
 \leftrightarrow         \left( V_{\ydiagram{6,3}} , \ell_0    \right)  ,   \qquad\qquad     \left( V_{\ydiagram{6,3}} , \ell_1    \right) \leftrightarrow         \left( V_{\ydiagram{6,3}} , \ell_2    \right) .     \]
 Explicitly, the isomorphism $ \mathcal{C}(\mathfrak{sl}_3, 9 
  )^0_{A_3} \cong S_4$ sends the $D_3$ subgroup to the permutations fixing $1$, and the exceptional autoequivalence to the permutation $(12)(34)$.

 For $\mathcal{C}(\mathfrak{sl}_4, 8 )^0_{A_4}$ we have that the braided autoequivalence group is isomorphic to $S_4$. This group is generated by the $D_4$ subgroup described above, along with the exceptional autoequivalence defined on objects by 
  \[     \left( V_{\ydiagram{2,1,1}} , \ell_0    \right) 
 \leftrightarrow       \left( V_{\ydiagram{4,4}} , \ell_0    \right), \qquad \qquad          \left( V_{\ydiagram{3,1}} , \ell_0    \right) \mapsto          \left( V_{\ydiagram{6,4,2}} , \ell_0    \right)   \mapsto          \left( V_{\ydiagram{3,3,2}} , \ell_0    \right)\mapsto          \left( V_{\ydiagram{6,4,2}} , \ell_2    \right) \mapsto  \left( V_{\ydiagram{3,1}} , \ell_0    \right).      \]
 The explicit isomorphism $\mathcal{C}(\mathfrak{sl}_4, 8 )^0_{A_4}\cong S_4$ sends the $D_4$ subgroup to the natural action on the set $\{1,2,3,4\}$, and the exceptional autoequivalence to the permutation $(12)$.

  For $\mathcal{C}(\mathfrak{sl}_5, 5 )^0_{A_5}$ we have that the braided autoequivalence group is isomorphic to $\operatorname{Alt}(5)$. This group is generated by the $D_5$ subgroup described above, along with the exceptional autoequivalence defined on objects by 
  \[    \resizebox{\hsize}{!}{ $\left( V_{\ydiagram{2,1,1,1}} , \ell_0    \right) 
 \mapsto    \left( V_{\ydiagram{4,3,2,1}} , \ell_0    \right)\mapsto    \left( V_{\ydiagram{4,3,2,1}} , \ell_1    \right)\mapsto   \left( V_{\ydiagram{2,1,1,1}} , \ell_0    \right) , \qquad          \left( V_{\ydiagram{4,3,2,1}} , \ell_2    \right)\mapsto  \left( V_{\ydiagram{4,3,2,1}} , \ell_4    \right)\mapsto  \left( V_{\ydiagram{4,3,2,1}} , \ell_3    \right)\mapsto  \left( V_{\ydiagram{4,3,2,1}} , \ell_2    \right).   $ } \]
  The explicit isomorphism $\mathcal{C}(\mathfrak{sl}_5, 5 )^0_{A_5}\cong \operatorname{Alt}(5)$ sends the $D_5$ subgroup to the natural action on the set $\{1,2,3,4,5\}$, and the exceptional autoequivalence to the permutation $(123)$.

  \begin{rmk}
      We wish to point out that at the level of fusion ring automorphisms, the above exceptional braided autoequivalences have appeared previously in the physics literature. The cases of $\mathcal{C}(\mathfrak{sl}_3, 9 )^0_{A_3}$ and $\mathcal{C}(\mathfrak{sl}_4, 8 )^0_{A_4}$ first appeared (to our best knowledge) in \cite{LeBeak}. The case of $\mathcal{C}(\mathfrak{sl}_5, 5 )^0_{A_5}$ first appeared (to our best knowledge) in \cite{bert55}. These papers made no attempt to classify all such fusion ring automorphisms. The prequel paper of the first author \cite{ModPt1} was the first time at which these fusion ring automorphisms were shown to exist at the categorical level, as well as proving full classification.
  \end{rmk}

\section{Generic Classification}\label{sec:gen}

In this section we consider the case where every \textit{\'etale} alegbra in $\mathcal{C}(\mathfrak{sl}_N,k)$ is pointed. Note that this is the generic situation, as the results of \cite{Gannon} show that for each $N$, there are only finitely many $k$ where there are non-pointed \textit{\'etale} algebras in $\mathcal{C}(\mathfrak{sl}_N,k)$ (in fact $k << \frac{13}{6}N^3$ when $N$ is large). Our main result is the following abstract classification of the indecomposable module categories over $\mathcal{C}(\mathfrak{sl}_N,k)$ in this setting. The key lemma for obtaining this classification will be the following, which counts the size of the double cosets of $ \mathcal{C}(\mathfrak{sl}_N, k)^0_{A_m}$ with respect to the subgroup coming from the image of $\operatorname{Aut}(A_m)$ under the map $\operatorname{Ind}$ from Lemma~\ref{lem:induce}.

\begin{lem}\label{lem:cosets}
    Let $(N,k) \not \in \{ (2,16), (3,9), (4,8), (5,5), (8,4), (9,3), (16,2)\}$, and let $m \mid N$ such that $m^2 \mid Nk$ if $N$ is odd, and such that $2m^2 \mid Nk$ is $N$ is even. Then 
    \[    \left|   \operatorname{Aut}(A_m) \backslash \operatorname{EqBr}\left(  \mathcal{C}(\mathfrak{sl}_N, k)^0_{A_m} \right)  / \operatorname{Aut}(A_m)      \right|  = \begin{cases}
1 & \text{ if $N=2$ and $k=2$}\\
2^{p_m + t_m} & \text{ if either $N=2$ or $k=2$}\\
   2^{1+p_m + t_m} & \text{ otherwise}\\ 
\end{cases}  \]
where $m',p_m$, and $t_m$ are as in Definition~\ref{def:tp}.
\end{lem}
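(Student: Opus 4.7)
The plan is to identify the image subgroup $\operatorname{Ind}(\operatorname{Aut}(A_m))$ explicitly inside the classified group $\operatorname{EqBr}(\mathcal{C}(\mathfrak{sl}_N,k)^0_{A_m})$ from Subsection~\ref{sec:braidauto}, and then perform a direct group-theoretic double-coset count. By Proposition~\ref{prop:13}, $\operatorname{Aut}(A_m) \cong \widehat{\mathbb{Z}_m} \cong \mathbb{Z}_m$. Combining Proposition~\ref{prop:imgpt} with the explicit description in Subsection~\ref{sec:braidauto} of the $\mathbb{Z}_{m'}$ factor as the image of $\operatorname{Ind}$ with kernel $m'\mathbb{Z}_m$, we see that $\operatorname{Ind}(\operatorname{Aut}(A_m))$ is precisely the $\mathbb{Z}_{m'}$ direct factor in the abelian case, and the rotation subgroup $\mathbb{Z}_{m'} \subset D_{m'}$ in the non-abelian case.

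With this identification the count splits into three subcases mirroring the classification. When $(N,k) = (2,2)$ the ambient group is trivial, so there is a single double coset. When exactly one of $N$ and $k$ equals $2$, the ambient group $\mathbb{Z}_{m'} \times \mathbb{Z}_2^{p_m+t_m}$ is abelian, so double cosets coincide with left cosets of the $\mathbb{Z}_{m'}$ factor, giving the claimed count of $2^{p_m+t_m}$.

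In the remaining generic case the ambient group is $D_{m'} \times \mathbb{Z}_2^{p_m+t_m}$, with $\mathbb{Z}_{m'}$ sitting as the rotation subgroup of the first factor. Since the second factor is central, the $\mathbb{Z}_{m'}$-double cosets of the product are in bijection with the $\mathbb{Z}_{m'}$-double cosets of $D_{m'}$ times the full $\mathbb{Z}_2^{p_m+t_m}$. Writing $D_{m'} = \mathbb{Z}_{m'} \sqcup r\mathbb{Z}_{m'}$ for a reflection $r$ and using the dihedral relation $rsr^{-1} = s^{-1}$, one gets $\mathbb{Z}_{m'} \cdot r \cdot \mathbb{Z}_{m'} = r\mathbb{Z}_{m'}$, so $D_{m'}$ has exactly two $\mathbb{Z}_{m'}$-double cosets, yielding a total of $2 \cdot 2^{p_m+t_m} = 2^{1+p_m+t_m}$.

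The step that requires care is the first one, namely verifying that $\operatorname{Ind}(\operatorname{Aut}(A_m))$ is exactly the $\mathbb{Z}_{m'}$ factor (or rotation subgroup) as described, rather than some other copy of $\mathbb{Z}_{m'}$ sitting inside the classified group. This is where Proposition~\ref{prop:imgpt} is essential: it gives the explicit formula $(X,\chi) \mapsto (X, \ell \cdot \chi)$ for $\operatorname{Ind}(\eta^\ell)$, which matches term by term the formula for the generator of the $\mathbb{Z}_{m'}$ factor recalled in Subsection~\ref{sec:braidauto}. The remaining group-theoretic bookkeeping is then routine.
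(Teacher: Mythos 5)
Your proposal is correct and takes essentially the same approach as the paper: identify $\operatorname{Ind}(\operatorname{Aut}(A_m))$ with the $\mathbb{Z}_{m'}$ factor (or rotation subgroup of $D_{m'}$) using Propositions~\ref{prop:13} and \ref{prop:imgpt} together with the explicit description recalled in Subsection~\ref{sec:braidauto}, and then do the routine double-coset count, which collapses to a coset count because $\mathbb{Z}_{m'}$ is normal in $D_{m'}$ and central in the abelian cases. The paper compresses the count into a single ``direct computation'' sentence, but the underlying argument is the same.
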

\begin{proof}
We recall from Subsection~\ref{sec:braidauto} the expression for $\operatorname{EqBr}\left(\mathcal{C}(\mathfrak{sl}_N, k)_{A_m}^0\right)$ in the case of \[(N,k) \notin \{(2,16),(3,9),(4,8),(5,5),(8,4), (9,3), (16,2)\}.\] The image of $\operatorname{Aut}(A_m) \cong \mathbb{Z}_m$ in $\operatorname{EqBr}\left(  \mathcal{C}(\mathfrak{sl}_N, k)^0_{A_m} \right)$ under the map $\operatorname{Ind}$ is the $\mathbb{Z}_{m'}$ factor in the $N=2$ or $k=2$ case, and the $\mathbb{Z}_{m'}$ subgroup of the $D_{m'}$ factor in the $N>2$ and $k> 2$ case.

 A direct computation shows that there are $2^{p_m + t_m}$ double cosets of $\mathbb{Z}_{m'}\times \mathbb{Z}_2^{p_m + t_m}$ with respect to the $\mathbb{Z}_{m'}$ subgroup, and $2^{1+p_m + t_m}$ double cosets of $D_{m'}\times \mathbb{Z}_2^{p_m + t_m}$ with respect to the $\mathbb{Z}_{m'}$ subgroup of $D_{m'}$.
\end{proof}

A direct application of Theorem~\ref{thm:nik} gives the following.

\begin{lem}\label{lem:firstcount}
    Let $(N,k) \not \in \{ (2,16), (16,2)\}$ be such that the only \textit{\'etale} algebra objects in $\mathcal{C}(\mathfrak{sl}_N, k)$ are pointed. Then there are 
    \[\sum_{m \mid N : m^2 \mid Nk } \begin{cases}
2^{p_m + t_m} & \text{ if $k=2$}\\
   2^{1+p_m + t_m} & \text{ otherwise}\\ 
\end{cases} \] indecomposable module categories over $\mathcal{C}(\mathfrak{sl}_N, k)$ if $N$ is odd, and 
\[\sum_{m \mid N : 2m^2 \mid Nk }\begin{cases}
1 & \text{ if $N=2$ and $k=2$}\\
2^{p_m + t_m} & \text{ if either $N=2$ or $k=2$}\\
   2^{1+p_m + t_m} & \text{ otherwise}\\ 
\end{cases} \] indecomposable module categories over $\mathcal{C}(\mathfrak{sl}_N, k)$ if $N$ is even.
\end{lem}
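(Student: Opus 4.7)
The plan is to combine Theorem~\ref{thm:nik} with Lemma~\ref{lem:cosets}. By Theorem~\ref{thm:nik}, indecomposable module categories over $\mathcal{C}(\mathfrak{sl}_N, k)$ are in bijection with equivalence classes of triples $(A_1, \mathcal{F}, A_2)$, where $A_1, A_2$ are \'etale algebras and $\mathcal{F}: \mathcal{C}(\mathfrak{sl}_N, k)^0_{A_1} \to \mathcal{C}(\mathfrak{sl}_N, k)^0_{A_2}$ is a braided equivalence. Under our hypothesis that every \'etale algebra in $\mathcal{C}(\mathfrak{sl}_N, k)$ is pointed, Proposition~\ref{prop:etpt} tells us that the only algebras to consider are the $A_m$ indexed by divisors $m$ of $N$ satisfying $m^2 \mid Nk$ (resp.\ $2m^2 \mid Nk$) if $N$ is odd (resp.\ even). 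Thus the count we want is
\[
\sum_{(m_1, m_2)} \left| \operatorname{Aut}(A_{m_1}) \backslash \operatorname{Eq}^{\operatorname{br}}\bigl(\mathcal{C}(\mathfrak{sl}_N, k)^0_{A_{m_1}},  \mathcal{C}(\mathfrak{sl}_N, k)^0_{A_{m_2}}\bigr) / \operatorname{Aut}(A_{m_2}) \right|
\]
ranging over pairs of valid $m_i$.

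The first key reduction is to argue that only the diagonal pairs $m_1 = m_2$ contribute. For this I would use Frobenius--Perron dimensions: since $A_m$ is connected \'etale, $\operatorname{FPdim}(\mathcal{C}(\mathfrak{sl}_N, k)^0_{A_m}) = \operatorname{FPdim}(\mathcal{C}(\mathfrak{sl}_N, k))/\operatorname{FPdim}(A_m)^2 = \operatorname{FPdim}(\mathcal{C}(\mathfrak{sl}_N, k))/m^2$. As any braided equivalence preserves Frobenius--Perron dimension, a braided equivalence between $\mathcal{C}(\mathfrak{sl}_N, k)^0_{A_{m_1}}$ and $\mathcal{C}(\mathfrak{sl}_N, k)^0_{A_{m_2}}$ forces $m_1 = m_2$. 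Once the sum is reduced to the diagonal, Remark~\ref{rmk:same} identifies each summand with the set of double cosets in Definition~\ref{def:act}, which is precisely what Lemma~\ref{lem:cosets} computes, yielding the formula.

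Finally I would verify that Lemma~\ref{lem:cosets} applies, i.e.\ that the hypothesis forces $(N,k)$ to be outside the exceptional set $\{(2,16),(3,9),(4,8),(5,5),(8,4),(9,3),(16,2)\}$. Each of these seven pairs is known (from the classifications of \cite{Gannon, newGan} and the conformal embeddings/level--rank duals reviewed in Section~\ref{sec:Cgk}) to admit a non-pointed \'etale algebra in $\mathcal{C}(\mathfrak{sl}_N, k)$, so our standing hypothesis already excludes them. The explicit removal of $(2,16)$ and $(16,2)$ in the lemma statement is therefore built into the hypothesis; inside the sum there is then no case distinction needed beyond the parity of $N$ and whether $N$ or $k$ equals $2$, matching the three branches of Lemma~\ref{lem:cosets}. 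The main obstacle I anticipate is ensuring the FPdim argument is properly justified (rather than a rank argument, which would be harder to pin down uniformly), and tracking the $N=k=2$ boundary case, where only $m=1$ satisfies the divisibility $2m^2 \mid Nk$, so the sum collapses to the single term "$1$" predicted by the formula.
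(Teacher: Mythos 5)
Your proposal reproduces the paper's argument essentially verbatim: identify module categories with triples via Theorem~\ref{thm:nik}, use Proposition~\ref{prop:etpt} to know all algebras are pointed, reduce to the diagonal $m_1=m_2$ by a dimension argument, and invoke Remark~\ref{rmk:same} and Lemma~\ref{lem:cosets} to count double cosets. The only cosmetic difference is that you use Frobenius--Perron dimension while the paper uses the categorical (global) dimension via \cite[Corollary~3.32]{LagrangeUkraine}; either works since both are preserved by equivalences and scale by $\dim(A)^2 = m^2$.

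One small conceptual slip in your last paragraph: you assert that all seven pairs $(2,16),(3,9),(4,8),(5,5),(8,4),(9,3),(16,2)$ admit a non-pointed \'etale algebra, and hence are automatically excluded by the hypothesis, making the explicit exclusion of $(2,16),(16,2)$ ``built into the hypothesis.'' That is not correct for the two pairs $(2,16)$ and $(16,2)$: in these cases the only \'etale algebras are pointed (for $\mathfrak{sl}_2$, the non-pointed \'etale algebras occur at levels $10$ and $28$, not $16$), which is precisely why the lemma statement must exclude $(2,16),(16,2)$ explicitly rather than rely on the hypothesis. The other five pairs are indeed ruled out by the hypothesis. This does not affect the validity of your proof, since you do inherit the explicit exclusion from the statement, but it is a misdiagnosis of why those two cases appear there.
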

\begin{proof}
From \cite[Corollary 3.8]{triples} we have that equivalences classes of indecomposable module categories over $\mathcal{C}(\mathfrak{sl}_N, k)$ are in bijection with triples $(A,\mathcal{F}, A')$ where $A, A'\in \mathcal{C}(\mathfrak{sl}_N, k)$ are \textit{\'etale} algebra objects, and $\mathcal{F}: \mathcal{C}(\mathfrak{sl}_N, k)_{A}^0\to \mathcal{C}(\mathfrak{sl}_N, k)_{A'}^0$ is a braided equivalence. These braided equivalences are considered up to the equivalence relation in Theorem~\ref{thm:nik}.

By assumption, the only connected \textit{\'etale} algebra objects in $\mathcal{C}(\mathfrak{sl}_N, k)$ are pointed, and hence of the form $A_m$ for $m$ a divisor of $N$ such that $m^2$ divides $Nk$ if $N$ is odd, and $2m^2$ divides $Nk$ if $N$ is even. Thus $A = A_{m_1}$ and $A' = A_{m_2}$ for $m_1,m_2$ divisors of $N$ as above.

We first claim that $m_1 = m_2$. To see this, observe from \cite[Corrollary 3.32]{LagrangeUkraine} that 
\[\operatorname{dim}\left( \mathcal{C}(\mathfrak{sl}_N, k)_{A_{m_i}}^0\right) = \frac{\operatorname{dim}(\mathcal{C}(\mathfrak{sl}_N, k))}{\dim(A_{m_i})^2}= \frac{\operatorname{dim}(\mathcal{C}(\mathfrak{sl}_N, k))}{m_i^2}.\]
The existence of a braided equivalence 
\[\mathcal{F}:\mathcal{C}(\mathfrak{sl}_N, k)_{A_{m_1}}^0 \to \mathcal{C}(\mathfrak{sl}_N, k)_{A_{m_2}}^0,\] 
implies $\operatorname{dim}\left( \mathcal{C}(\mathfrak{sl}_N, k)_{A_{m_1}}^0\right)=\operatorname{dim}\left( \mathcal{C}(\mathfrak{sl}_N, k)_{A_{m_2}}^0\right)$. It follows that $m_1 = m_2$.

Our problem thus reduces to counting the number of double cosets of $\operatorname{EqBr}(\mathcal{C}(\mathfrak{sl}_N, k)_{A_m}^0)$ with respect to the image of the map $\operatorname{Aut}(A_m)$. We have from Lemma~\ref{lem:cosets} that for each $m$, there are exactly
\[ \begin{cases}
1 & \text{ if $N=2$ and $k=2$}\\
2^{p_m + t_m} & \text{ if either $N=2$ or $k=2$}\\
   2^{1+p_m + t_m} & \text{ otherwise}\\ 
\end{cases} \]
of these cosets. Hence we have the statement of the lemma.
\end{proof}

The summations appearing in the above Lemma can be drastically simplified. Our next combinatorial result does exactly this.

\begin{lem}\label{lem:count}
    Let $N,k\in \mathbb{N}_{\geq 1}$. Then
    \begin{align*}
\sum_{m \mid N : m^2 \mid Nk } 2^{p_m + t_m} &= \sigma(N) \quad &&\text{if $N$ is odd}\\
\sum_{m \mid N : 2m^2 \mid Nk } 2^{p_m + t_m} &= \begin{cases}
\sigma(N) &\quad \text{if $k$ is even}\\
\sigma(\frac{N}{2}) &\quad \text{if $k$ is odd}\\
\end{cases} \quad && \text{if $N$ is even}
\end{align*}
where $\sigma(\ell)$ is the number of divisors of $\ell$. 
\end{lem}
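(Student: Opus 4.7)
The plan is to show that the sum is multiplicative in an appropriate sense, reducing the problem to a prime-by-prime local calculation. First I would check that for each $m$ with the appropriate divisibility constraint, the quantity $N m'/m^2$ is actually a positive integer (so that ``odd primes dividing'' it is well-defined). This is a short valuation check: if $v_p(m) \leq v_p(k)$ then $v_p(Nm'/m^2) = v_p(N) - v_p(m) \geq 0$ since $m \mid N$; if $v_p(m) > v_p(k)$ then $v_p(Nm'/m^2) = v_p(N) + v_p(k) - 2v_p(m) \geq 0$ by the divisibility constraint.

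Next I would observe that the conditions $m \mid N$, $m^2 \mid Nk$ (resp.\ $2m^2 \mid Nk$), and the exponents $p_m, t_m$ are all determined prime-by-prime from the $p$-adic valuations $v_p(m)$, $v_p(N)$, $v_p(k)$. In particular, an odd prime $p$ contributes the factor $2$ to $2^{p_m+t_m}$ exactly when $v_p(N m'/m^2) > 0$ and $v_p(k/m') = 0$, and the prime $2$ contributes $2^{t_m}$ depending on $v_2(Nm'/m^2)$ and $v_2(k/m')$; all other primes contribute trivially. Therefore the sum factors as a product over primes dividing $N$ of local sums over the allowed values of $v_p(m)$.

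Now for an odd prime $p$ with $a_p = v_p(N)$ and $b_p = v_p(k)$, I would do the case analysis on $j = v_p(m) \in \{0, 1, \ldots, \min(a_p, \lfloor(a_p+b_p)/2\rfloor)\}$, splitting on whether $j < b_p$ (factor $1$), $j = b_p < a_p$ (factor $2$ since then $v_p(Nm'/m^2) = a_p - b_p > 0$ and $v_p(k/m') = 0$), $b_p < j$ with $2j < a_p+b_p$ (factor $2$), or $2j = a_p+b_p$ (factor $1$). Summing these contributions in both subcases $b_p \geq a_p$ and $b_p < a_p$ telescopes to $a_p + 1 = v_p(N) + 1$. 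Taking the product over all odd primes $p \mid N$ immediately yields the formula $\sigma(N)$ in the odd-$N$ case, where $t_m = 0$ since $N m'/m^2$ is odd.

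For even $N$, the same analysis handles odd primes, and the prime $2$ must be treated separately using the constraint $1 + 2 v_2(m) \leq v_2(N) + v_2(k)$ and the piecewise definition of $t_m$. The main obstacle is this case analysis at the prime $2$: with $A = v_2(N) \geq 1$ and $B = v_2(k)$, one must verify that when $B \geq 1$ the local sum equals $A+1$, and when $B = 0$ it equals $A$. I would carry this out by tabulating $(v_2(Nm'/m^2), v_2(k/m'))$ for each valid $j = v_2(m)$ and reading off $t_m$; the pattern of $2$'s and $1$'s again telescopes to $A+1$ or $A$. Combining with the odd-prime product gives $(A+1)\prod_{p \text{ odd}}(v_p(N)+1) = \sigma(N)$ when $k$ is even and $A \cdot \prod_{p \text{ odd}}(v_p(N)+1) = \sigma(N/2)$ when $k$ is odd, completing the proof.
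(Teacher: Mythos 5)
Your plan is sound, and on inspection the local computations do telescope exactly as you anticipate — I checked the deferred prime-$2$ analysis: with $A = v_2(N)$, $B = v_2(k)$, and $j = v_2(m)$ ranging over the admissible valuations (so $j \leq A$ and $2j+1 \leq A+B$), the factors $2^{t_m}$ sum to $A+1$ when $B \geq 1$ and to $A$ when $B = 0$, in every subcase ($A \geq B$, $A = B-1$, $A \leq B-2$, and the two parities of $A+B$). However, your route is genuinely different from the paper's. You compute the sum directly by multiplicativity: the admissibility condition and the exponents $p_m, t_m$ decompose prime-by-prime, so the global sum factors as a product of local sums, and each local sum at an odd prime $p$ is seen by case analysis to equal $v_p(N)+1$ (respectively $v_2(N)+1$ or $v_2(N)$ at $p=2$). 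The paper instead constructs a bijection: it assigns to each divisor $d$ of $N$ (or $N/2$) the quantity $m_d = \gcd(d, Nk/d)$ (or $\gcd(d, Nk/(2d))$), observes that $m_d$ satisfies the constraints, and shows that the fibre over each admissible $m$ has size exactly $2^{p_m+t_m}$ — again prime-by-prime, by counting the two candidate values of $v_p(d)$ compatible with a given $v_p(m_d)$. The bijective route costs a little more bookkeeping but buys structure that the paper reuses: it underlies Remark~\ref{rmk:bij} and Definition~\ref{def:ll}, which in turn feed the explicit parametrisation $(d,\varepsilon) \mapsto \mathcal{M}_{d,\varepsilon}$ and the fusion formula in Theorem~\ref{thm:bigmain}. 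Your direct computation establishes the counting identity more economically but does not set up that machinery. Both are correct; the choice is one of downstream convenience rather than necessity.
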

\begin{proof}
    Let $m$ be a divisor of $N$ such that either $m^2 \mid Nk$, or $2m^2\mid Nk$ depending on the parity of $N$, and let $d$ be a divisor of $N$, or $\frac{N}{2}$ depending on the parity of $N$ and $k$. Let us write the prime decompositions
    \[ N = \prod_p p^{\nu_p}\qquad k = \prod_p p^{\kappa_p}\qquad m = \prod_p p^{\mu_p} \quad \text{and} \quad d = \prod_p p^{\delta_p}. \]
    We then have that $p_m$ is the number of odd primes such that $\mu_p \geq \kappa_p$ and $\nu_p + \kappa_p > 2 \mu_p$. Let us write $\mu_2' = \min(\kappa_2, \mu_2)$. Then we have
    \[  t_m = \begin{cases}
            0 \quad &\text{if $\nu_2+ \mu_2'  = 2\mu_2$, or if $\kappa_2 \geq \mu_2'+2$, or if both $\kappa_2 = \mu_2' $ and $\nu_2 + \mu_2'  = 2\mu_2 + 1$  }\\
            1 \quad &\text{otherwise.}
    \end{cases}     \]
    Assume first that $N$ is odd, and let $d$ be a divisor of $N$. Define $m_d=\gcd\left(d, \frac{Nk}{d}   \right) = \prod_p p^{\mu_p}$ where $\mu_p=\mathrm{min}\{\delta_p,\nu_p+\kappa_p-\delta_p\}$. Note that $m_d$ divides $N$ (since $\mu_p\le\delta_p\le\nu_p$) and $m_d^2$ divides $Nk$ (since $2\mu_p\le \delta_p+(\nu_p+\kappa_p-\delta_p)=\nu_p+\kappa_p$). Conversely, if both $m|N$ and $m^2|Nk$, then for $d=m$ we have $m_d=m$. 
 
Fix such an $m$. We prove the statement of the lemma by showing that the number of divisors $d|N$ with $m_d=m$ is precisely $2^{p_m}$. Let $d|N$ have $m_d=m$. Then $\mu_p=\mathrm{min}\{\delta_p,\nu_p+\kappa_p-\delta_p\}$ so either $\delta_p=\mu_p$ or $\delta_p=\nu_p+\kappa_p-\mu_p$. But for the latter to work, we require $\nu_p+\kappa_p-\mu_p\le\nu_p$ (since $d|N$), i.e. $\kappa_p\le \mu_p$. And for $\delta_p\ne \mu_p$ (so this gives different divisors), we require $\nu_p+\kappa_p-\mu_p\ne \mu_p$, i.e. $\nu_p+\kappa_p>2\mu_p$. 

Thus the number of different  $d|N$ with $m_d=m$ equals $\prod_p2^{t'_p}$ where $t'_p=1$ if both $\nu_p+\kappa_p>2\mu_p$ and $\kappa_p\le \mu_p$; otherwise $t'_p=0$. This completes the proof in the $N$ odd case. 

\smallskip Now consider  $N$ and $k$ even, and let $d$ be a divisor of $N$. Define $m_d=\gcd\left(d, \frac{Nk}{2d}   \right) =\prod_pp^{\mu_p}$ with $\mu_p$ as before, except for $\mu_2=\mathrm{min}\{\delta_2,\nu_2+\kappa_2-\delta_2-1\}$. Then $m_d$ divides $N$ and $2m^2$ divides $Nk$. Conversely, when $d=m$ for such an $m$, we have $m_d=m$.

Fix such an $m$. Again we want to count the number of $d\mid N$ with $m_d=m$. Odd primes behave exactly as before.  Also, either $\delta_2=\mu_2$ or $\delta_2=\nu_2+\kappa_2-\mu_2-1$. But $\delta_2=\nu_2+\kappa_2-\mu_2-1$ requires that $\delta_2\le\nu_2$ (since $d|N$), i.e. $\kappa_2\le\mu_2+1$. And for $\delta_2\ne\mu_2$ (so this gives different divisors), $2\mu_2+1< \nu_2+\kappa_2$.

Thus in this case the number of different  divisors $d|N$ with $m_d=m$ will equal $\prod_p2^{t'_p}$ where $t'_2=1$ if  $\kappa_2\le\mu_2+1$ and  $2\mu_2+1< \nu_2+\kappa_2$; otherwise $t'_2=0$. To complete the proof in this case, it suffices to verify that $t'_2= t_m$.

There are three subcases here:\smallskip

\noindent\textbf{ (i)} $ \kappa_2=\mu_2+1$, so $\mu_2'=\mu_2$ and $\nu_2+\mu'_2-2\mu_2=\nu_2+\kappa_2-2\mu_2-1$. We find that $t_m=1$ iff $\nu_2+\mu'_2>2\mu_2$ iff $ \nu_2+\kappa_2>2\mu_2+1$ iff $t_2'=1$.\smallskip

\noindent\textbf{ (ii)} $ \kappa_2\le\mu_2$, so $\mu_2'=\kappa_2$. We find that $t_m=1$ iff $\nu_2+\mu'_2>2\mu_2+1$ iff $ \nu_2+\kappa_2>2\mu_2+1$ iff $t_2'=1$.\smallskip

\noindent\textbf{ (iii)} $\kappa_2>\mu_2+1$, so $\mu_2'=\mu_2$. Here $t_2'=0$ and $\kappa_2\ge \mu'_2+2$, so also $t_m=0$.\smallskip

Finally, the argument  for $N$ even and $k$ odd (so $\kappa_2=\mu'_2=0$) is similar but easier. Define $m_d=\mathrm{gcd}(d,\frac{Nk}{2d})$ as before. Let $d$ divide $N/2$ and have $m_d=m$. Again $\delta_2=\mu_2$ or $\delta_2=\nu_2+\kappa_2-\mu_2-1$; $\delta_2=\nu_2+\kappa_2-\mu_2-1$ now requires both $\kappa_2\le\mu_2$ (since $d|\frac{N}{2}$), which is automatically satisfied,  and $2\mu_2+1< \nu_2$ (so we get different divisors). So define $t'_2$ as before; we need to show that $t_2'=t_m$. But  $t_m=1$ iff $\nu_2\ge 2\mu_2+2$ iff $t'_2=1$. This completes the proof in this case.
\end{proof}

Putting the results of this section together, we obtain our main result.
\begin{thm}\label{thm:genabs}
    Let $N,k \in \mathbb{N}_{\geq 3}$ be such that the only \textit{\'etale} algebra objects in $\mathcal{C}(\mathfrak{sl}_N, k)$ are pointed. Then there are precisely 
    \[  \begin{cases}
                2\sigma\left( \frac{N}{2}\right) \quad & \text{ if $N$ even and $k$ odd}\\
                2\sigma(N) \quad &\text{ otherwise}
    \end{cases}  \]
    indecomposable module categories over $\mathcal{C}(\mathfrak{sl}_N, k)$, up to equivalence.
\end{thm}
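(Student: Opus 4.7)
The plan is to simply combine Lemma~\ref{lem:firstcount} and Lemma~\ref{lem:count}, so the proof is quite short; all of the genuine work has already been done in those two lemmas.

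First I would observe that the hypothesis $N,k\geq 3$ rules out every exceptional case appearing in the statement of Lemma~\ref{lem:firstcount}: the excluded pairs $(2,16)$ and $(16,2)$ do not satisfy $N,k\geq 3$, and both of the special sub-cases in the per-summand formulas of Lemma~\ref{lem:firstcount} (namely ``$N=2$ and $k=2$'' and ``either $N=2$ or $k=2$'') are also excluded. Hence for every divisor $m$ of $N$ with $m^{2}\mid Nk$ (or $2m^{2}\mid Nk$ if $N$ is even), the contribution of $m$ to the count in Lemma~\ref{lem:firstcount} is precisely $2^{1+p_{m}+t_{m}}$.

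Next I would split into cases on the parity of $N$, and in the $N$ even case further on the parity of $k$. If $N$ is odd, Lemma~\ref{lem:firstcount} gives
\[
\sum_{m\mid N\,:\,m^{2}\mid Nk} 2^{1+p_{m}+t_{m}}\; =\; 2\sum_{m\mid N\,:\,m^{2}\mid Nk}2^{p_{m}+t_{m}},
\]
and the right-hand side equals $2\sigma(N)$ by the odd-$N$ case of Lemma~\ref{lem:count}. If $N$ is even, Lemma~\ref{lem:firstcount} gives
\[
\sum_{m\mid N\,:\,2m^{2}\mid Nk} 2^{1+p_{m}+t_{m}}\; =\; 2\sum_{m\mid N\,:\,2m^{2}\mid Nk}2^{p_{m}+t_{m}},
\]
and the right-hand side equals $2\sigma(N)$ when $k$ is even, and $2\sigma\!\left(\tfrac{N}{2}\right)$ when $k$ is odd, again by Lemma~\ref{lem:count}. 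Assembling the three cases yields the formula in the statement.

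The only step that required real work was the combinatorial identity in Lemma~\ref{lem:count}, whose proof carefully matches divisors $d$ of $N$ (or of $N/2$) with pairs $(m,\text{subset of primes})$ via $m_{d}=\gcd(d,Nk/d)$ (respectively $\gcd(d,Nk/(2d))$); that bijective counting is the main obstacle and it is already handled. Once that is in hand, Theorem~\ref{thm:genabs} is just a one-line combination of Lemmas~\ref{lem:firstcount} and~\ref{lem:count}, with a brief case check to confirm that the exceptional clauses in Lemma~\ref{lem:firstcount} are inactive under the hypothesis $N,k\geq 3$.
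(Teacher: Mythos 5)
Your proposal is correct and takes exactly the approach the paper intends: Theorem~\ref{thm:genabs} is stated in the paper immediately after the remark ``Putting the results of this section together,'' and is obtained by combining Lemma~\ref{lem:firstcount} and Lemma~\ref{lem:count} precisely as you do, with the observation that $N,k\geq 3$ (together with the pointed-\'etale hypothesis) removes all the exceptional branches in those lemmas.
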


\section{Module categories and modular invariants}\label{sec:cons}

The results of Section~\ref{sec:gen} show that there are precisely $2\sigma(N)$ (or $2\sigma\left(\frac{N}{2}\right)$ if $N$ even and $k$ odd) indecomposable module categories over $\mathcal{C}(\mathfrak{sl}_N, k)$ for generic $k$. We can immediately identify half of these module categories as \textit{simple current} module categories. This follows from the well known result on pointed cyclic algebra objects in $\mathcal{C}(\mathfrak{sl}_N, k)$.
\begin{prop}
    Let $N\geq 2$ and $k\geq 1$. Then there is a bijection between divisors $d$ of $N$ (or $\frac{N}{2}$ if $N$ is even and $k$ is odd), and pointed cyclic algebra objects in $\mathcal{C}(\mathfrak{sl}_N, k)$. This bijection sends such a divisor $d$ to the object
    \[   A_d := \bigoplus_{0 \leq j < m}  V_{\tau^{j \frac{N}{d}}(\emptyset}).  \]
\end{prop}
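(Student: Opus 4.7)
The strategy is to parametrise pointed cyclic algebra objects by cyclic subgroups of $\operatorname{Inv}(\mathcal{C}(\mathfrak{sl}_N, k))$ on which the associator $3$-cocycle trivialises, and then check the arithmetic condition for trivialisation.

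First I would use that $\operatorname{Inv}(\mathcal{C}(\mathfrak{sl}_N, k)) \cong \mathbb{Z}_N$, cyclic with generator $V_{[k]} = V_{\tau(\emptyset)}$, as recalled in Subsection~\ref{sec:Cgk}. Consequently any invertible $g \in \mathcal{C}(\mathfrak{sl}_N, k)$ of order exactly $d$ must generate the unique subgroup of $\mathbb{Z}_N$ of order $d$, which exists precisely when $d \mid N$ and is generated by $V_{\tau^{N/d}(\emptyset)}$. This forces the underlying object of a rank-$d$ pointed cyclic algebra to be $A_d = \bigoplus_{j=0}^{d-1} V_{\tau^{jN/d}(\emptyset)}$, so divisors $d$ of $N$ exhaust the possible candidates.

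Next I would determine when $A_d$ admits an algebra structure. Since $A_d$ lies in the pointed subcategory $\mathcal{C}(\mathfrak{sl}_N, k)^{pt} \simeq \operatorname{Vec}(\mathbb{Z}_N, \omega)$, where $\omega$ is the associator $3$-cocycle computable from the pointed data cited in Subsection~\ref{sec:Cgk}, an algebra structure exists iff the pullback $\iota_d^* \omega$ along the inclusion $\iota_d \colon \mathbb{Z}_d \hookrightarrow \mathbb{Z}_N$, $1 \mapsto N/d$, is a coboundary in $H^3(\mathbb{Z}_d, \mathbb{C}^\times) \cong \mathbb{Z}_d$. A direct evaluation using the explicit form of $\omega$ shows that $\iota_d^*\omega$ is cohomologically trivial for every $d \mid N$ when $N$ is odd, and also when $N$ and $k$ are both even; when $N$ is even and $k$ is odd, the obstruction is nontrivial for exactly the divisors $d$ of $N$ which fail to divide $N/2$, leaving the set of valid $d$ to be the divisors of $N/2$.

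Finally, since $H^2(\mathbb{Z}_d, \mathbb{C}^\times) = 0$, any two algebra structures on $A_d$ are isomorphic as algebra objects, and algebras with distinct values of $d$ have non-isomorphic underlying objects, so the assignment $d \mapsto A_d$ is a bijection with the set described in the statement. The main obstacle is purely computational: pinning down the exact cocycle representative of $\omega$ for the pointed subcategory of $\mathcal{C}(\mathfrak{sl}_N, k)$ and carrying out the restriction calculation; once that formula is in hand, the parity dichotomy between even and odd $k$ at even $N$ is immediate from $H^3(\mathbb{Z}_d, \mathbb{C}^\times) \cong \mathbb{Z}_d$.
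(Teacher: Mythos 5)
Your overall strategy is the right one, and it is in fact what the paper has in mind: the paper itself gives no argument beyond a reference to \cite{vecG} and the remark that the proof is ``near identical'' to that of Proposition~\ref{prop:etpt}, which was also reduced to checking a property of the pointed subcategory via the braiding/associator data. Classifying rank-$d$ pointed cyclic algebras by whether the associator $3$-cocycle trivialises on the unique subgroup $\mathbb{Z}_d \subset \operatorname{Inv}(\mathcal{C}(\mathfrak{sl}_N,k)) \cong \mathbb{Z}_N$, and using $H^2(\mathbb{Z}_d,\mathbb{C}^\times)=0$ to see the algebra structure is unique up to isomorphism, is exactly the correct decomposition of the problem.

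The gap is that you never actually carry out the $H^3$ restriction computation --- you write ``a direct evaluation using the explicit form of $\omega$ shows that'' and then simply announce the divisor conditions. Since that evaluation \emph{is} the content of the proposition, it cannot be left as an assertion. Here is how it goes. The pointed subcategory carries the quadratic form $q(j) = \theta_{V_{[k^j]}} = e^{\pi i k j(N-j)/N}$, and for a pointed braided category on a cyclic group of order $d$ with quadratic form $\bar q$, the underlying $3$-cocycle class in $H^3(\mathbb{Z}_d,\mathbb{C}^\times) \cong \mathbb{Z}_d$ is trivial iff $\bar q(1)^d = 1$ (for $d$ odd this is automatic; for $d$ even it is the detecting $\mathbb{Z}_2$ quotient of $Q(\mathbb{Z}_d)\cong\mathbb{Z}_{2d}$). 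Applied to the subgroup generated by $V_{[k^{N/d}]}$, whose induced form has generator value $q(N/d)$, one computes $q(N/d)^d = e^{\pi i k N(d-1)/d}$, so the algebra exists iff $k\cdot(N/d)\cdot(d-1)$ is even. If $N$ is odd then $d$ is odd, so $d-1$ is even and the condition always holds; if $N$ and $k$ are both even the factor $k$ makes the product even; and if $N$ is even and $k$ is odd the condition is equivalent to ($d$ odd or $N/d$ even), i.e.\ to $d \mid N/2$. This recovers precisely the divisor dichotomy in the statement, so once this computation is supplied your proof is complete.
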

This proposition is simply the non-\textit{\'etale} version of Proposition~\ref{prop:etpt}, and is proved in a near identical fashion. We note that the general case of pointed algebra objects in tensor categories is worked out in \cite{vecG}. We can then take the category of $A_d$ modules in $\mathcal{C}(\mathfrak{sl}_N, k)$ to obtain half of our indecomposable module categories.
\begin{defn}\label{def:mod+}
For $d$ a divisor of $N$ (or $\frac{N}{2}$), we will write 
\[ \mathcal{M}_{d,+} :=   \mathcal{C}(\mathfrak{sl}_N, k)_{A_d}.   \]
\end{defn}
These module categories are completely understood. In particular, we have the following formula for their modular invariants (this formula was obtained in \cite[Equation 9]{xratedproof}, which applies in our setting by \cite[Section 3.5]{simps}).
\begin{equation}\label{eq:mod} Z(d,+)_{\lambda,\nu} := [ \mathcal{Z}(\mathcal{M}_{d,+})]_{V_\lambda,V_\nu} = \sum_{i=1}^d \delta^d\left(t(\lambda) + \frac{N i \hat{k}}{2d}\right)\delta_{\nu, \tau^{i \frac{N}{d}}(\lambda)}.      \end{equation}
Here $\delta^d(L) = 1$ if $d\mid L$, and $\delta^d(L) = 0$ otherwise, and \[ \hat{k} = \begin{cases}
    k + N \quad &\text{ if $Nk$ odd}\\
    k &\text{otherwise.}
\end{cases}\] To remind the reader, the definition of $t(\lambda)$ is given in Subsection~\ref{sec:Cgk}.

We will require a more explicit form for the matrix $Z(d,+)$. To do this, we make the following definitions.
\begin{defn}\label{def:ll}
    Let $N,k,d,m\in \mathbb{N}$. We will write their prime decompositions as
    \[     N = \prod_p p^{\nu_p},\qquad   k = \prod_p p^{\kappa_p},\qquad d = \prod_p p^{\delta_p},\quad\text{and}\quad  m = \prod_p p^{\mu_p}.     \]
    We will write 
    \[  m_d:=\begin{cases}
            \gcd\left( d, \frac{Nk}{d} \right) \quad &\text{ $N$ odd}\\
            \gcd\left( d, \frac{Nk}{2d} \right) \quad &\text{ $N$ even}\\
    \end{cases} \]
    as in the proof of Lemma~\ref{lem:count}.

    Let $P$ be the set of all primes dividing $N$, and write $2^P := \{-,+\}^P$. For each $m$, define an equivalence relation on $2^P$ by $\vv{a} \sim_m \vv{b}$ iff $\vv{a}_p = \vv{b}_p$ whenever $\kappa_p \leq \mu_p < \frac{\nu_p+\kappa_p}{2}$. Finally we define
    \[  \vv{a}(d)_p := \begin{cases}
    +1 \quad &\text{ if $\delta_p = \mu_p$}\\
    -1 \quad &\text{ otherwise}.
\end{cases}   \]
\end{defn}

\begin{rmk}\label{rmk:bij}
    Note that the proof of Lemma~\ref{lem:count} says that for each $m$, the set of $d$ satisfying $m_d = m$ is in bijection with $2^P / \sim_m$. This bijection in one direction sends $d\mapsto \vv{a}(d)$. In the reverse direction, we send $\vv{a}$ to the divisor $d_{m, \vv{a}}$ defined by
    \[     d_{m, \vv{a}}:= \begin{cases}
        m \prod_{p: \vv{a}_p =-1} p ^{  \nu_p +\kappa_p - 2\mu_p  } \quad &\text{$N$ odd, or $\vv{a}_2 = +1$}\\
        m\cdot  2^{\nu_2 + \kappa_2 - 2\mu_2 -1}\prod_{p\neq 2: \vv{a}_p =-1} p ^{  \nu_p +\kappa_p - 2\mu_p  } \quad &\text{$N$ even and $\vv{a}_2 = -1$}
    \end{cases}       \]
\end{rmk}
We will write $\mathbb{Z}_N$ for the group action of invertible objects of $\mathcal{C}(\mathfrak{sl}_N, k)$ on the set of isomorphism classes of simple objects, and $D_N$ for the extension of $\mathbb{Z}_N$ by the duality map. The map $\tau$ from Definition~\ref{def:tau} is the pullback of the action of the generator $V_{[k]}$ to the indexing set. In a similar fashion, we will write $\lambda^*$ for the pullback of the duality map (so $V_\lambda^* \cong V_{\lambda^*}$).

With these definitions, we can show the following.
\begin{prop}\label{prop:modSc}The modular invariant $Z(d,+)$ satisfies $Z(d,+)=Z(d,+)^t$ and
$$Z(d,+)_{{\lambda},{\mu}}=    \left\{\begin{array}{cc} |\mathrm{stab}_{\bbZ_{m_d}}(\lambda)|&\mathrm{ if}\ m_d|t(\lambda)\ \mathrm{and}\ \mu\in\bbZ_{m_d} \prod_p \tau^{h_pt(\lambda)\ell_pN_p}(\lambda)\cr 0&\mathrm{otherwise}\end{array}\right.$$
   where $N_p=N/p^{\nu_p+\kappa_p}\in \mathbb{Q}$, $h_p=0$ if $\vv{a}(d)_p=+1$, $h_p=-2$ if $\vv{a}(d)_p=-1$, and $\ell_p\equiv({N_pk})^{-1}$ (mod $p^{\nu_p}$).
  \end{prop}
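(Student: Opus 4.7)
The plan is to simplify the raw formula~(\ref{eq:mod}) by a prime-by-prime analysis of the congruence $d\mid t(\lambda)+\tfrac{Ni\hat k}{2d}$ and then to count collisions in the set $\{\tau^{iN/d}(\lambda):1\le i\le d\}$. Symmetry $Z(d,+)=Z(d,+)^t$ can be dispatched first: if $\mu=\tau^{iN/d}(\lambda)$ then $\lambda=\tau^{(d-i)N/d}(\mu)$ and $t(\mu)\equiv t(\lambda)+ikN/d\pmod N$, so substituting and using $d\mid N$ reduces the claim to the divisibility $d\mid \tfrac{N\hat k}{2}$, which holds by direct inspection of the four parity cases for $(N,k)$ and the definition of $\hat k$.

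For the main computation I would fix $\lambda$ and use CRT to decouple the condition $d\mid t(\lambda)+\tfrac{Ni\hat k}{2d}$ into one congruence per prime $p\mid N$. For odd $p$, since $v_p(\hat k)=\kappa_p$ and $v_p(N/d)=\nu_p-\delta_p$, the local condition becomes $p^{\delta_p}\mid t(\lambda)+i\,p^{\nu_p+\kappa_p-\delta_p}\cdot(N_pk)$, which is solvable in $i$ iff $p^{\min(\delta_p,\,\nu_p+\kappa_p-\delta_p)}=p^{\mu_p}$ divides $t(\lambda)$; and when solvable, $i$ is determined mod $p^{\delta_p-\mu_p}$ with the unique lift $i\equiv -t(\lambda)\,\ell_p\,p^{2\mu_p-\nu_p-\kappa_p}\pmod{p^{\delta_p-\mu_p}}$ when $\vv{a}(d)_p=-1$, and no constraint when $\vv{a}(d)_p=+1$ (where $\delta_p=\mu_p$). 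These two alternatives are exactly what the sign $h_p\in\{0,-2\}$ records, and pushing the factor $p^{\nu_p}$ into the exponent of $\tau$ produces $\tau^{h_pt(\lambda)\ell_pN_p}$. The prime $p=2$ requires parallel but separate handling because $\hat k$ differs from $k$ exactly when $Nk$ is odd; the extra factor of $2$ in the denominator of $\tfrac{Ni\hat k}{2d}$ is absorbed into the definition $m_d=\gcd(d,Nk/2)$ for even $N$, which reproduces the $2$-adic bookkeeping from Lemma~\ref{lem:count}.

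Taking the product of local solutions via CRT shows that the valid set $S_\lambda\subseteq\mathbb{Z}_d$ is nonempty iff $m_d\mid t(\lambda)$, in which case it is a coset of an order-$(d/m_d)$ subgroup whose representative is precisely $\sum_p h_p\,t(\lambda)\,\ell_p\,p^{\nu_p}/p^{\nu_p+\kappa_p}$ packaged as the product $\prod_p\tau^{h_pt(\lambda)\ell_pN_p}(\lambda)$. The action of $\mathbb{Z}_d/\tfrac{d}{m_d}\mathbb{Z}\cong \mathbb{Z}_{m_d}$ on this coset via $\tau^{N/d}$ is transitive and each orbit element has exactly $|\mathrm{stab}_{\mathbb{Z}_{m_d}}(\lambda)|$ preimages in $S_\lambda$, yielding the stated formula and finishing the identification of the nonzero entries.

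The main obstacle will be the $2$-adic case: there are four parity regimes for $(N,k)$ (with $\hat k=k+N$ only when both are odd), and the definition of $\vv{a}(d)_p$, of $m_d$, and of $t_m$ from Definition~\ref{def:tp} all branch on these. Carefully checking that the local congruence at $p=2$ is solvable exactly when $2^{\mu_2}\mid t(\lambda)$ and that the unique-lift formula reproduces the $p=2$ factor of $\prod_p\tau^{h_pt(\lambda)\ell_pN_p}$ with the right sign will require reusing the case analysis (i)–(iii) from the proof of Lemma~\ref{lem:count}. Once that bookkeeping is done, the stabiliser count is immediate.
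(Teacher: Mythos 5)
Your proposal follows essentially the same route as the paper's proof for the main formula: both use CRT to decouple the divisibility condition $d \mid t(\lambda) + \tfrac{Ni\hat k}{2d}$ prime by prime, observe that solvability at $p$ requires $p^{\mu_p}\mid t(\lambda)$ (hence $m_d\mid t(\lambda)$), and extract the unique local lift when $\vv{a}(d)_p=-1$ to identify the representative $\prod_p\tau^{h_pt(\lambda)\ell_pN_p}(\lambda)$; the stabiliser count then finishes the argument. Your local-lift expression $i\equiv -t(\lambda)\ell_p p^{2\mu_p-\nu_p-\kappa_p}$ is written loosely (the exponent is negative, and a factor of $2$ at odd primes is elided), but the normalization the paper uses, $i_p\equiv -2\ell_p t(\lambda)/p^{\mu_p}\pmod{p^{\delta_p-\mu_p}}$, is what you mean; also, the paper's treatment of $p=2$ is self-contained and does not require re-importing the case analysis (i)--(iii) of Lemma~\ref{lem:count}, so your anticipated obstacle is milder than you fear.

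The one place where you genuinely diverge is the symmetry claim. The paper establishes $Z(d,+)=Z(d,+)^t$ \emph{after} the main formula, by directly showing that for the specific $\mu=\tau^{rN/m}\prod_p\tau^{h_pt(\lambda)\ell_pN_p}(\lambda)$ the entry $Z(d,+)_{\mu,\lambda}$ equals $Z(d,+)_{\lambda,\mu}$ via an explicit computation with the solved $i_p$. You instead argue symmetry \emph{up front} from the raw formula~(\ref{eq:mod}) via the involution $i\mapsto d-i$, using $t(\tau^{iN/d}\lambda)\equiv t(\lambda)+ikN/d\pmod N$ and reducing to $d\mid \tfrac{N\hat k}{2}$ (together with the observation that the extra term $(iN/d)(k-\hat k)$ vanishes mod $d$, either because $\hat k=k$ or because $d\mid N^2/d$). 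This is correct and arguably cleaner, since it avoids any dependence on the explicit lift data; the paper's version buys nothing extra here except that it is already inside the established notation.
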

    
\begin{proof}
By the Chinese remainder theorem, we can write any $i\in\bbZ_d$ in the form $i=\sum_{p|N}i_pd/{p^{\delta_p}}$ for unique $i_p\in\bbZ_{p^{\delta_p}}$.    Plugging this into the $\delta^d$ condition in Equation~\eqref{eq:mod}, we get (for odd $p$) that $-2t(\lambda)\equiv Nk/{p^{\delta_p}}i_p$ (mod $p^{\delta_p}$) and (for $p=2$) $-t(\lambda)\equiv Nk/{2^{\delta_2+1}}i_2$ (mod $2^{\delta_2}$). Note that (for odd $p$) $p^{\mu_p}$ divides $Nk/p^{\delta_p}$ since $\delta_p+\mu_p\le (\nu_p+\kappa_p-\mu_p)+\mu_p=\nu_p+\kappa_p$, and $2^{\mu_2}$ divides $Nk/2^{\delta_2+1}$ since $\delta_2+1+\mu_2\le (\nu_2+\kappa_2-\mu_2-1)+1+\mu_2=\nu_2+\kappa_2$. Thus there is a solution for $i$ only if $m_d$ divides $t(\lambda)$.  

Note that $N_pk$ is an integer coprime to $p$, so $\ell_p$ exists. Assume $m_d$ divides $t(\lambda)$.
If $\mu_p=\delta_p$ (i.e. $\vv{a}(d)_p=+1$), then any $i_p$ works. If  $\mu_p<\delta_p\le\nu_p$ (i.e. $\vv{a}(d)_p=-1$), then $i_p$ works iff $i_p\equiv -2\ell_pt(\lambda)/p^{\mu_p}$ (mod $p^{\delta_p-\mu_p}$) (when $p$ is odd), or iff  $i_2\equiv -\ell_2t(\lambda)/2^{\mu_2}$ (mod $2^{\delta_2-\mu_2}$) ($p=2$).  Hence if $Z(d,+)_{\lambda,\mu}\ne0$, then $Z(d,+)_{\lambda,\mu'}\ne0$ iff $\mu'\in\bbZ_{m_d}\mu$, in which case $Z(d,+)_{\lambda,\mu'}=Z(d,+)_{\lambda,\mu}$.

Finally, suppose  $\vv{a}(d)_p=-1$. Since $t(\tau^{rN/p^{\mu_p}}\tau^{-2t(\lambda)\ell_pN_p}\lambda)=rNk/p^{\mu_p}-2kt(\lambda)\ell_pN_p+t(\lambda)\equiv0-2t(\lambda)+t(\lambda)\equiv-t(\lambda)$ (mod $p^{\delta_p}$) for any $r$, the solution $i_p$ for $\tau^{rN/p^{\mu_p}}\tau^{-2t(\lambda)\ell_pN_p}\lambda$ will be the negative of that for $\lambda$, so $$Z(d,+)_{\tau^{rN/m}\prod_p \tau^{h_pt(\lambda)\ell_pN_p}\lambda,\tau^{rN/m}(\lambda)}=|\mathrm{stab}_{\bbZ_{m_d}}(\tau^{rN/m}\prod_p\tau^{h_pt(\lambda)\ell_pN_p}\lambda)|$$ $$=|\mathrm{stab}_{\bbZ_m}(\lambda)|=Z(d,+)_{\lambda,\tau^{rN/m}\prod_p\tau^{h_pt(\lambda)\ell_pN_p}\lambda}$$
so we get that $Z(d,+)$ is symmetric.
\end{proof}

The module categories $\mathcal{M}_{d,+}$ give $\sigma(N)$ (or $\sigma\left(\frac{N}{2}\right)$) indecomposable module categories. We obtain the remaining indecomposable module categories of $\mathcal{C}(\mathfrak{sl}_N, k)$ via more abstract means. Consider the triple $(\mathbf{1}, \mathcal{F}_{\text{charge}}, \mathbf{1})$, where $\mathcal{F}_{\text{charge}}$ is the charge-conjugation braided autoequivalence of $\mathcal{C}(\mathfrak{sl}_N, k)$ from \cite{autos}. This braided autoequivalence is non-trivial whenever $N\geq 3$ and $k\geq 3$. On objects we have
\[      \mathcal{F}_{\text{charge}}(V_\lambda) = V_\lambda^*.     \]
From \cite[Corollary 3.8]{triples}, this triple corresponds to an indecomposable $\mathcal{C}(\mathfrak{sl}_N, k)$ module category.
\begin{defn}
    Let $N,k \geq 3$. We write $\mathcal{M}_{1,-}$ for the indecomposable $\mathcal{C}(\mathfrak{sl}_N, k)$ module category corresponding to the triple $(\mathbf{1}, \mathcal{F}_{\text{charge}}, \mathbf{1})$.
\end{defn}
From Theorem~\ref{thm:nik} the modular invariant of $\mathcal{M}_{1,-}$ is given as follows.
\[    Z(1,-)_{\lambda, \nu} := [ \mathcal{Z}(\mathcal{M}_{1,-})]_{V_\lambda,V_\nu}  = \delta_{\lambda, \nu^*}.   \]
Note that $\mathcal{M}_{1,-}$ is an invertible module category (of order 2), via \cite[Remark 3.8]{triples}. We can thus obtain indecomposable module categories via relative tensor products. 
\begin{defn}
    For $d$ a divisor of $N$ (or $\frac{N}{2}$), we define
    \[    \mathcal{M}_{d,-} :=  \mathcal{M}_{1,-} \boxtimes_{\mathcal{C}(\mathfrak{sl}_N, k)}  \mathcal{M}_{d,+} .    \]
\end{defn}
From the formula given in Subsection~\ref{sub:modinv} we have the following modular invariant for the module categories.
\[    Z(d,-) = Z(1,-) Z(d,+).   \]
The module categories $\mathcal{M}_{d,\pm}$ give $2\sigma(N)$ (or $2\sigma(\frac{N}{2})$) indecomposable module categories over $\mathcal{C}(\mathfrak{sl}_N, k)$. However, we must make sure that these module categories are distinct to verify we have a complete list of indecomposable module categories. We will do this by showing that the corresponding modular invariants are distinct, as these are invariants of the module category. We will in fact show the stronger result that the modular invariants are linearly independent over $\mathbb{Z}$. This stronger result will be required later in the section.

We first need the following technical lemma. 
\begin{lem}\label{lem:techGan}
We have the following:
\begin{enumerate}[a)]
    \item For all $N\ge2$, $k\ge 1$, and $0\le a<N$ except for $(N,k,a)=(2,2,1)$, there exists an object $V_\lambda\in\mathcal{C}(\mathfrak{sl}_N,k)$ with trivial $\mathbb{Z}_N$-stabiliser and $t(\lambda)\equiv a$ (mod $N$).

\item For all $N\ge3$, $k\ge 3$, and $0\le a<N$  except for $$(N,k,a)\in\{(3,3,0),(3,6,0),(4,4,0),(4,4,2),(5,5,0),(6,3,3),(6,3,0)\}\,,$$ there exists an object $V_\lambda\in\mathcal{C}(\mathfrak{sl}_N,k)$ with trivial $D_N$-stabiliser and $t(\lambda)\equiv a$ (mod $N$).
\end{enumerate}
\end{lem}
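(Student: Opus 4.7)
The plan is to construct, for each admissible $(N,k,a)$, an explicit Young diagram $\lambda$ in the $(N{-}1)\times k$ box with $t(\lambda) \equiv a \pmod{N}$ and trivial stabiliser under the relevant group action. The key preliminary reduction is that since $t(\tau\lambda) \equiv t(\lambda) + k \pmod{N}$, any relation $\tau^j\lambda \cong \lambda$ forces $jk \equiv 0 \pmod{N}$, so the $\bbZ_N$-stabiliser of any simple is automatically a subgroup of $\bbZ_{\gcd(N,k)}$. In particular, when $\gcd(N,k) = 1$ every simple has trivial $\bbZ_N$-stabiliser, and part (a) collapses to the elementary observation that $t$ takes every value mod $N$. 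For part (b) the analogous analysis uses the identity $t(\lambda^*) \equiv -k - t(\lambda) \pmod{N}$ to restrict which composite symmetries $\tau^j\circ *$ can even potentially fix a given $\lambda$.

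For the main case $\gcd(N,k) > 1$, my plan is to take as candidate $\lambda^{(a)} := (1^a)$, the single column of height $a$, for $1 \le a < N$ (which fits since $a \le N{-}1$ and $k \ge 1$), and $\lambda^{(0)} := (2, 1^{N-2})$ for the residue $a = 0$ when $k \ge 2$, with a separate argument when $k = 1$ (where the only simples are the fundamentals $V_{\Lambda_i}$ and $\tau$ acts freely). Applying the rule of Definition~\ref{def:tau}, the $\tau$-orbit of $(1^a)$ runs through the shapes $(k^j, 1^a)$ for $0 \le j < N-a$, then transitions to a complementary rectangular shape when the first column first reaches height $N$ and is deleted, and continues analogously; inspection shows these $N$ shapes are pairwise distinct outside of a handful of degenerate parameters. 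A parallel orbit calculation, combined with the $t$-formula for $\lambda^*$, handles the duality twist needed for (b).

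The main obstacle, and the reason the statement contains explicit exception lists, is the finite case analysis for small $(N,k)$. My plan for this is to verify the genuineness of each listed exception by a direct enumeration of all diagrams in the appropriate $(N{-}1)\times k$ box with $t \equiv a \pmod{N}$ — for instance, in $\mathcal{C}(\mathfrak{sl}_2, 2)$ the only simple with $t \equiv 1$ is $V_{(1)}$, and $\tau V_{(1)} \cong V_{(1)}$, so $(2,2,1)$ is indeed an obstruction — and to handle all nearby non-exceptional tuples by a local modification of the construction, typically replacing $(1^a)$ by a hook $(b, 1^{a-b})$ with $b$ tuned to desymmetrise both the $\tau$-orbit and its duality twist. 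The delicate part, and the true hard step of the proof, is the duality analysis in part (b) when $\gcd(N,k)$ is large and $k$ is close to $N$: here the algebraic constraint $\tau^j\lambda^* \cong \lambda$ can conspire with the combinatorial constraint of the bounding rectangle to force every diagram in a given residue class to have non-trivial $D_N$-stabiliser, and verifying that this phenomenon occurs \emph{exactly} at the listed seven tuples is where the bulk of the work will sit.
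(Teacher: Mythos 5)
Your high-level plan matches the paper's: take $\lambda = (1^a) = \Lambda_a$ for most residues (the reduction via $t(\tau\lambda) = t(\lambda) + k$ is a sound first step, and it is the same candidate the paper uses), and modify to a hook when the column fails. But your proposed candidates are wrong in exactly the two residue classes where the column cannot work for part (b), and this is a concrete gap rather than a deferred detail. For $a \equiv 0$, your $\lambda^{(0)} = (2,1^{N-2})$ is the weight $\Lambda_1 + \Lambda_{N-1}$, which is \emph{self-dual}, hence automatically fixed by the duality involution and useless for establishing a trivial $D_N$-stabiliser; the paper instead takes $(3,1^{N-3}) = 2\Lambda_1 + \Lambda_{N-2}$, whose failure set is precisely what Proposition~\ref{prop:fix} in the appendix controls. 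Likewise for $a = \frac{N}{2}$, your candidate $(1^{N/2}) = \Lambda_{N/2}$ is self-dual (and for $k=2$ already has non-trivial $\bbZ_N$-stabiliser, which is the sole exception to part (a) that the paper isolates), so it fails outright for (b); the paper swaps to $\Lambda_1 + \Lambda_{\frac{N}{2}-1} = (2,1^{N/2-2})$ and verifies triviality of the $D_N$-stabiliser by a short argument about the shape of the associated Dynkin labels.

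You do gesture toward replacing $(1^a)$ by a hook $(b,1^{a-b})$, but you frame that as a local patch for a few small $(N,k)$, whereas the actual structure of the argument requires it for the entire residue classes $a=0$ and $a=\frac{N}{2}$ at all levels. Relatedly, your prediction of where the ``bulk of the work will sit'' is off: the hard step in the paper is not enumerating the seven listed exceptional tuples, but proving the positive assertion that $2\Lambda_1 + \Lambda_{N-2}$ has trivial $D_N$-stabiliser for all $(N,k)$ outside a short explicit list (Proposition~\ref{prop:fix}), which takes a casewise analysis over $k=3,4,5,\ge 6$ and over $r = N-1$. Without fixing the self-duality of your $a=0$ and $a=\frac{N}{2}$ candidates and supplying that positive verification, the plan does not close.
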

\begin{proof}For part (a), $\lambda=\Lambda_a$ works unless both $a=N/2$ and $k=2$, in which case take $\lambda=\Lambda_1+\Lambda_{\frac{N}{2}-1}$. 

For part (b), again $\lambda=\Lambda_a$ works, unless $a=\frac{N}{2}$ or $a=N$.   

For $a=\frac{N}{2}$, take $\lambda=\Lambda_1+\Lambda_{\frac{N}{2}-1}$.   To see it has trivial $D_N$-stabiliser, for $N>6$ such a weight has two strings of 0's of differing lengths; for $N=6$ with $k>3$, use $\lambda_0>\mathrm{max}\{\lambda_i\}$; and for $N=4$ with $k>4$ again use   $\lambda_0>\mathrm{max}\{\lambda_i\}$.

 For $a=0$, take $\lambda=2\Lambda_1+\Lambda_{N-2}$. This object is shown to have trivial $D_N$ stabiliser in Proposition~\ref{prop:fix}.
 \end{proof}

With this technical lemma in hand, we can obtain linear independence.

 \begin{lem}
     Let $N,k\geq 3$, then the modular invariants 
     of the non-exceptional module categories of  $\mathcal{C}(\mathfrak{sl}_N,k)$ are linearly independent over $\mathbb{Z}$, except for $$(N,k)\in\{(3,3),(3,6),(6,3),(4,4),(5,5)\}\,.$$
 For  $(N,k)=(3,3),(3,6),(5,5)$, the only linear dependencies are $Z(N,+)=Z(N,-)$; for $(N,k)=(6,3)$ the only is $Z(3,+)=Z(3,-)$; and for $(N,k)=(4,4)$ they are generated by $Z(2,+)=Z(2,-)$ and $Z(4,+)=Z(4,-)$.
 \end{lem}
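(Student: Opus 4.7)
The plan is to assume a dependence $\sum_{d,\varepsilon} c_{d,\varepsilon} Z(d,\varepsilon)=0$ and show that, outside the listed exceptions, all coefficients vanish. I would organize the argument in three stages: first separate the $\varepsilon=\pm$ components, then distinguish coefficients with different $m_d$, and finally distinguish coefficients sharing the same $m_d$ but different $\vv{a}(d)$.

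For the first stage, take any $\lambda$ with trivial $D_N$-stabilizer. Proposition~\ref{prop:modSc} says the row $Z(d,+)_{\lambda,\cdot}$ is supported in $\mathbb{Z}_N\lambda$, while the relation $Z(d,-)_{\lambda,\cdot}=Z(d,+)_{\lambda^*,\cdot}$ places the row $Z(d,-)_{\lambda,\cdot}$ inside $\mathbb{Z}_N\lambda^*$. Trivial $D_N$-stabilizer forces $\lambda^*\notin \mathbb{Z}_N\lambda$, so these orbits are disjoint and the two parts of the dependence vanish independently along that row. If every residue $a\pmod N$ admits such a $\lambda$ with $t(\lambda)\equiv a$, one fully separates and reduces to the linear independence of $\{Z(d,+)\}_d$ alone (the $-$ case is identical after swapping $\lambda\leftrightarrow\lambda^*$), and Lemma~\ref{lem:techGan}(b) supplies such $\lambda_a$ for precisely those $(N,k)$ outside the listed exceptional pairs.

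For the second stage, Proposition~\ref{prop:modSc} identifies the row $Z(d,+)_{\lambda_a,\cdot}$ with the $0/1$ indicator of the coset $\tau^{\sigma_d(a)}\langle \tau^{N/m_d}\rangle\lambda_a$ when $m_d\mid a$, and with $0$ otherwise, where $\sigma_d(a):=\sum_p h_p(d)\,a\,\ell_pN_p$. Taking $a=0$ all shifts vanish, and the $\lambda_0$-row equation evaluated at $\tau^j\lambda_0$ reads $\sum_{d:\,N/m_d\mid j}c_{d,+}=0$; Möbius inversion over the divisor poset of $N$ then yields $\sum_{d:\,m_d=m}c_{d,+}=0$ for every $m$ occurring as an $m_d$. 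To finish, fix such an $m$ and separate the coefficients with $m_d=m$, which by Remark~\ref{rmk:bij} are indexed by $\vv{a}\in 2^P/\sim_m$. For a carefully selected nonzero multiple $a$ of $m$, the shift $\sigma_d(a)\bmod N/m$ realizes a distinct coset as each \emph{active prime} component $\vv{a}(d)_p$ (those $p$ with $\kappa_p\le\mu_p<\frac{\nu_p+\kappa_p}{2}$) is flipped, because for such $p$ the factor $h_p(d)\,\ell_p\,N_p$ is a unit modulo the $p$-part of $N/m$. A Chinese Remainder argument, mirroring the one in the proof of Proposition~\ref{prop:modSc}, then disentangles the coefficients prime by prime.

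The exceptional $(N,k)$ are precisely those where Lemma~\ref{lem:techGan}(b) fails to supply some needed $\lambda_a$. For each such pair I would enumerate the valid divisors, write down $Z(d,\pm)$ explicitly from Proposition~\ref{prop:modSc}, and verify that the only dependencies are the stated ones: $Z(N,+)=Z(N,-)$ for $(N,k)\in\{(3,3),(3,6),(5,5)\}$; $Z(3,+)=Z(3,-)$ for $(6,3)$; and the two independent relations $Z(2,+)=Z(2,-)$ together with $Z(4,+)=Z(4,-)$ for $(4,4)$. The main technical obstacle is the within-$m_d$ separation --- showing that the shift vectors $\sigma_d(a)$ distinguish all $\vv{a}(d)$ --- but this reduces to a prime-by-prime rank computation that the definitions of $P_m$, $h_p$, and $\ell_p$ conspire to make tractable.
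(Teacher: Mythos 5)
Your proposal takes a genuinely different route from the paper's. The paper's proof does a single downward induction on divisors: for each $d'$ (starting from the largest), it picks a $\lambda$ with trivial $D_N$-stabilizer and $t(\lambda)\equiv -m_{d'}\pmod N$ (Lemma~\ref{lem:techGan}(b)), and evaluates the alleged dependence at the two specific entries $(\lambda,\tau^{N/d'}\lambda)$ and $(\lambda,\tau^{-N/d'}\lambda^*)$, killing both $c_{d'}$ and $c'_{d'}$ in one step. You instead propose a three-stage decomposition: (1) sign separation from disjointness of $\mathbb{Z}_N\lambda$ and $\mathbb{Z}_N\lambda^*$, (2) M\"obius inversion over the divisor lattice using $a=0$ to reduce to $\sum_{d:\,m_d=m}c_{d,+}=0$, (3) within-$m_d$ separation via the shift vectors $\sigma_d(a)$. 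Stages (1) and (2) are sound; in particular for (2), when $t(\lambda_0)\equiv 0\pmod N$ the shift $\prod_p\tau^{h_pt(\lambda_0)\ell_pN_p}$ does land inside the subgroup $\mathbb{Z}_{m_d}$ (because $\mu_p\ge\kappa_p$ whenever $\vv{a}(d)_p=-1$), so your $\lambda_0$-row equation is as claimed.

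However, Stage (3) is a genuine gap. You assert that ``for a carefully selected nonzero multiple $a$ of $m$,'' the shifts $\sigma_d(a)\bmod N/m$ realize distinct cosets as active components of $\vv{a}(d)$ are flipped, and that ``the definitions of $P_m$, $h_p$, and $\ell_p$ conspire to make [it] tractable.'' This is precisely the hardest part of the argument and must actually be carried out: one has to exhibit such an $a$ together with a $\lambda$ of that residue having trivial $D_N$-stabilizer, and then verify prime by prime that the cosets $\mathbb{Z}_{m_d}\prod_p\tau^{h_p(d)\,a\,\ell_pN_p}(\lambda)$ are pairwise distinct as $\vv{a}(d)$ ranges over a $\sim_m$-equivalence class. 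It is not obvious this works for a single choice of $a$; the paper avoids the issue entirely by letting $a$ vary with $d$ (taking $a_d\equiv -m_d$), which lets the divisibility $m_d\mid t(\lambda)$ and the shift condition together isolate a single coefficient at a time. If you keep your Stage-(2)-then-(3) decomposition you will need to flesh out the Chinese-remainder/prime-by-prime computation in full; as written, the proof is incomplete at exactly the point where all the content lies.
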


\begin{proof}

We avoid for now the values listed in the statement of the theorem. Given any divisor $d$ of $N$ or $N/2$ as appropriate, define $m_d=\mathrm{gcd}\{d,N\hat{k}/(2d)\}$ as before and $0<a_d< N$ by $a_d\equiv -m_d$ (mod $N$). Let $\lambda$ be the object promised by Lemma~\ref{lem:techGan} (b). 
 
 Suppose we have $\sum_dc_dZ(d,+)+\sum_dc'_dZ(d,-)=0$, where the sums are over divisors $d$ of $N$ or $N/2$ as appropriate. Let $d'$ be the largest such divisor (i.e. $d'=N$ or $N/2$). Note that $Z(d,+)_{{\lambda},{\tau^{N/d'}(\lambda)}}\ne 0$ for any divisor $d\le d'$ iff $d=d'$, and $Z(d,-)_{{\lambda},{\tau^{N/d'}(\lambda)}}\ne 0$ for all divisors $d$. Thus $c_{d'}=0$. Moreover,  $Z(d.-)_{{\lambda},{\tau^{-N/d'}(\lambda^*)}}\ne 0$ for any divisor $d\le d'$ iff $d=d'$, and $Z(d,+)_{{\lambda},{\tau^{-N/d'}(\lambda^*)}}\ne 0$ for all divisors $d$.  Thus $c'_{d'}=0$. Repeating this argument with the next biggest value of $d$, etc, we get inductively that all coefficients $c_d,c'_d$ must vanish. 
 
 Because of Lemma 1, the only possible linear dependencies are:
 $$Z(N,+)=Z(N,-)\eqno(a)$$ 
 for $(N,k)=(3,3),(3,6),(5,5)$;
 $$Z\left(\frac{N}{2},+\right)=Z\left(\frac{N}{2},-\right)\eqno(b)$$
 for $(N,k)=(6,3)$; and 
  $$c_NZ(N,+)+c_N'Z(N,-)+c_{\frac{N}{2}}Z\left(\frac{N}{2},+\right)+c_{\frac{N}{2}}'Z\left(\frac{N}{2},-\right)=0\eqno(c)$$ 
 for $(N,k) = (4,4)$. There is nothing more to say about (a) and (b), other than that they exist. For (c), $m_4=m_2=2$. We find that the $({\Lambda_2},{\Lambda_2})$ entries require $c_2=-c'_2$, and $({\Lambda_2},{\tau(\Lambda_2)})$ requires $c_4=-c_4'$.
 \end{proof}
 \begin{rmk}
     Note that in general the modular invariant is not a complete invariant of a module category (though our main theorem does show that for generic $k$ it is a complete invariant). While we have the above identities of modular invariants in some special cases, we expect that the corresponding module categories are still distinct. 
 \end{rmk}

 The linear independence of the modular invariants $Z(d,\pm)$ implies that the corresponding module categories $\mathcal{M}_{d,\pm}$ are inequivalent. There are precisely as many of these indecomposable module categories, as abstractly classified by Theorem~\ref{thm:genabs}. Hence we obtain the following.
 \begin{thm}\label{thm:list}
   Let $N,k\geq 4$ be such that the only connected \textit{\'etale} algebra objects in $\mathcal{C}(\mathfrak{sl}_N, k)$ are pointed. Then
   \[     \begin{cases} 
   \left\{  \mathcal{M}_{d,\varepsilon} : d \mid \frac{N}{2}, \varepsilon\in \{-,+\}      \right\} \quad &\text{ if $N$ and $k$ are both odd}\\
   \left\{  \mathcal{M}_{d,\varepsilon} : d \mid N, \varepsilon\in \{-,+\}      \right\} \quad &\text{ otherwise}
   \end{cases}     \]
   is a complete set of representatives for the indecomposable module categories over $\mathcal{C}(\mathfrak{sl}_N, k)$.
 \end{thm}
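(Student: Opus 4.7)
My plan is to prove Theorem~\ref{thm:list} by combining the abstract count of Theorem~\ref{thm:genabs} with a concrete count of the constructed module categories $\mathcal{M}_{d,\varepsilon}$, and verifying that these candidates are pairwise inequivalent using modular invariants.

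First I would verify the cardinalities match. The set on the right-hand side has size $2\sigma(N/2)$ when $N$ is even and $k$ is odd, and $2\sigma(N)$ otherwise, since $\varepsilon$ ranges over a two-element set and $d$ ranges over the divisors of $N$ (or of $N/2$). This is exactly the number of indecomposable module categories up to equivalence produced by Theorem~\ref{thm:genabs} under our hypotheses on $N,k$. So it suffices to show that the listed module categories are pairwise inequivalent; once this is established, they must exhaust the equivalence classes.

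To establish pairwise inequivalence I would use the modular invariants $Z(d,\varepsilon)$ as discrete invariants of $\mathcal{M}_{d,\varepsilon}$. The preceding lemma in the excerpt shows that the full family $\{Z(d,+),Z(d,-)\}$ is linearly independent over $\mathbb{Z}$, except for the pairs $(N,k)\in\{(3,3),(3,6),(6,3),(4,4),(5,5)\}$. The restriction $N,k\geq 4$ in the hypothesis immediately excludes the first three exceptional pairs. For the remaining two, $(4,4)$ and $(5,5)$, the category $\mathcal{C}(\mathfrak{sl}_N,k)$ is known to possess non-pointed connected \'etale algebra objects (see Theorem~\ref{thm:spec}), so these are excluded by the étale-algebra hypothesis of Theorem~\ref{thm:list}. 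Hence in every case covered by the hypothesis, the modular invariants $Z(d,\varepsilon)$ are linearly independent, and in particular the module categories $\mathcal{M}_{d,\varepsilon}$ are pairwise inequivalent.

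Combining these two observations finishes the proof: the listed module categories are pairwise inequivalent and equal in number to the total count of indecomposable module categories given by Theorem~\ref{thm:genabs}, so they form a complete set of representatives. The only substantive step is the bookkeeping check that the exceptional pairs from the linear independence lemma never satisfy the hypotheses of Theorem~\ref{thm:list}; everything else is a direct assembly of results already proved in the section.
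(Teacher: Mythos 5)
Your proposal is correct and follows essentially the same route as the paper: cardinality matching against Theorem~\ref{thm:genabs} plus linear independence of the modular invariants $Z(d,\pm)$ to establish pairwise inequivalence. Your explicit verification that the exceptional pairs from the linear independence lemma ($(3,3),(3,6),(6,3)$ excluded by $N,k\geq 4$; $(4,4),(5,5)$ excluded by the pointed-\'etale hypothesis) is a careful spelling-out of a bookkeeping step the paper leaves implicit, and is correct.
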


Now that we have a complete set of representatives for the indecomposable module categories, and as we know that that their modular invariants are linearly independent, we can compute the relative tensor products of these module categories by determining relations between the corresponding modular invariants.

\begin{prop}
We have the following identities of modular invariants.
\begin{enumerate}
    \item $Z(1,-)^2 = Z(1,+)$,
    \item $Z(1,-)Z(d,+)=Z(d,+)Z(1,-)$,
    \item $Z(d',+)Z(d'',+)=Z(d'',+)Z(d',+)=rZ(d,+)$  
\end{enumerate}
 where $r=\mathrm{gcd}\{m_{d'},m_{d''}\}$, and $d$ is defined under the bijection from Remark~\ref{rmk:bij} by $m_d=\mathrm{lcm}\{m_{d'},m_{d''}\}$ and $\vv{a}(d)=\vv{a}(d')\,\vv{a}(d'')$.   
\end{prop}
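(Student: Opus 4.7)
My plan for identities (1) and (2) is direct manipulation of the given formulas. For (1), I note that $(Z(1,-)^2)_{\lambda,\mu}=\sum_\nu \delta_{\lambda,\nu^*}\delta_{\nu,\mu^*}=\delta_{\lambda,\mu^{**}}=\delta_{\lambda,\mu}$, which agrees with $Z(1,+)$, since $m_1=1$ forces every orbit in Proposition~\ref{prop:modSc} to be a singleton. For (2), I will expand both sides to obtain $Z(d,+)_{\lambda^*,\mu}$ and $Z(d,+)_{\lambda,\mu^*}$ respectively, and match these using three compatibilities of duality with the $\mathbb{Z}_N$-action: $t(\lambda^*)\equiv -t(\lambda)\pmod N$, $(\tau^j\lambda)^*=\tau^{-j}(\lambda^*)$, and preservation of $\mathbb{Z}_{m_d}$-stabilisers. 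The sign flip $t(\lambda)\mapsto -t(\lambda)$ compensates exactly the direction flip $\tau\mapsto \tau^{-1}$ inside the orbit condition of Proposition~\ref{prop:modSc}, so the two expressions coincide.

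For the main identity (3), my plan is to expand
\[(Z(d',+)Z(d'',+))_{\lambda,\mu}=\sum_{i',i''}\delta^{d'}\!\!\left(t(\lambda)+\tfrac{Ni'\hat k}{2d'}\right)\delta^{d''}\!\!\left(t(\lambda)+\tfrac{i'Nk}{d'}+\tfrac{Ni''\hat k}{2d''}\right)\delta_{\mu,\tau^{i'N/d'+i''N/d''}(\lambda)}\]
using $t(\tau^{i'N/d'}\lambda)=t(\lambda)+i'Nk/d'\pmod N$, and then decompose prime by prime via the Chinese remainder theorem exactly as in the proof of Proposition~\ref{prop:modSc}. Writing $i'=\sum_p i'_p d'/p^{\delta'_p}$ and $i''=\sum_p i''_p d''/p^{\delta''_p}$, the two $\delta$-conditions split into independent per-prime congruences on the pairs $(i'_p,i''_p)$. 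I expect the joint solvability condition to reduce to $\max(\mu'_p,\mu''_p)\mid t(\lambda)$ for each $p$, i.e.\ $\operatorname{lcm}(m_{d'},m_{d''})\mid t(\lambda)$, matching the support condition of $Z(d,+)$ with $m_d=\operatorname{lcm}(m_{d'},m_{d''})$.

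Once the supports are matched, the next step is analysing the resulting $\tau$-shift and the multiplicity. At each prime $p$, each $\delta$ contributes either no shift (sign $+$) or a shift by $-2t(\lambda)\ell_pN_p$ (sign $-$); the two shifts compose, and using $\ell_pN_pk\equiv 1\pmod{p^{\nu_p}}$ I expect to show that when both signs are $-$ the composite shift is absorbed into the $\mathbb{Z}_{m_d}$-orbit (giving $\vv{a}(d)_p=+$), whereas when exactly one is $-$ the single shift survives (giving $\vv{a}(d)_p=-$). This is precisely the product rule $\vv{a}(d)=\vv{a}(d')\vv{a}(d'')$. The factor $r=\gcd(m_{d'},m_{d''})$ then arises as the size of the fibre of the map $(i'_p,i''_p)\mapsto i'N/d'+i''N/d''\pmod N$ after the divisibility conditions are imposed, i.e.\ the overlap of the two simple-current groups. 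Multiplying by the stabiliser $|\operatorname{stab}_{\mathbb{Z}_{m_d}}(\lambda)|$ from Proposition~\ref{prop:modSc} reproduces $rZ(d,+)_{\lambda,\mu}$. Commutativity of the product is then automatic from the symmetry of the answer in $d'$ and $d''$.

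The main obstacle will be the $p=2$ bookkeeping: for even $N$ the condition on $m$ is $2m^2\mid Nk$ rather than $m^2\mid Nk$, the factor $\hat k$ differs from $k$ when $Nk$ is odd, and the formula in Proposition~\ref{prop:modSc} contains a subcase split for $p=2$ inherited from the proof of Lemma~\ref{lem:count}. Verifying that the sign-vector identity $\vv{a}(d)=\vv{a}(d')\vv{a}(d'')$ survives every subcase of Definition~\ref{def:ll} at $p=2$, and checking that the orbit-absorption argument at $h_p=-2$ is genuine modulo $N$ (rather than merely modulo $p^{\nu_p}$), is where essentially all of the technical work will lie.
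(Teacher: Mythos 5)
Your plan is essentially correct, and for identity (3) it follows the same prime-by-prime CRT strategy as the paper; the main difference is one of organization. For (2), the routes genuinely diverge: you propose to reindex the summation in Equation~\eqref{eq:mod} (replacing $i \mapsto -i$ and using $t(\lambda^*) \equiv -t(\lambda)$, $(\tau^j\lambda)^* = \tau^{-j}\lambda^*$, and $\delta^d(-x)=\delta^d(x)$), which works and is a short direct computation; the paper instead invokes the conceptual fact that $Z(1,-) = S^2$, so it commutes with every modular invariant, making (2) immediate. Your approach is more elementary but loses this structural insight. For (3), you expand the product directly from Equation~\eqref{eq:mod} and do CRT inline, whereas the paper first packages the per-prime analysis into the closed form of Proposition~\ref{prop:modSc} and then multiplies two such expressions, summing over the intersecting $\mathbb{Z}_{m_{d'}}$- and $\mathbb{Z}_{m_{d''}}$-orbits. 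Both paths do the same arithmetic; the paper's route avoids redoing the CRT reduction already carried out in Proposition~\ref{prop:modSc}, and its multiplicity accounting comes out as a product of three gcds that is then shown to equal $r\,\gcd(s,m)$, whereas your ``fibre size times stabiliser'' bookkeeping reaches the same number directly. Your identification of the three structural ingredients — lcm support condition, sign product $\vv{a}(d) = \vv{a}(d')\vv{a}(d'')$ from the rule $h''_p\vv{a}(d')_p + h'_p$, and gcd multiplicity $r$ — matches the paper, and you are right that the $p=2$ bookkeeping (the $\hat{k}/2$ normalisation and the shifted $\mu_2$ in Lemma~\ref{lem:count}) is where most of the unstated verification lies. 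No gap, just more work than the paper's version.
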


\begin{proof}
We have (1) by simple computation. 

Since $Z(1,-)=S^2$ (where $S$ is the normalised $S$ matrix for $\mathcal{C}(\mathfrak{sl}_N,k)$, we obtain $Z(1,-)Z(d)=Z(d)Z(1,-)$ for all $d$, as any modular invariant commutes with the $S$ matrix. This shows (2).

Note that, for any divisors $d_i$ of $N$, we have
\begin{equation}\label{eq:ss}\langle \tau^{N/d_1}(\mathbf{1})\rangle\cap\langle \tau^{N/d_2}(\mathbf{1})\rangle=\langle \tau^{N/\mathrm{gcd}\{d_1,d_2\}}(\mathbf{1})\rangle
\end{equation}
or symbolically, $\bbZ_{d_1}\cap\bbZ_{d_2}=\bbZ_{\mathrm{gcd}\{d_1,d_2\}}$. Thus if stab$_{\bbZ_N}(X)=\bbZ_s$ then stab$_{\bbZ_m}(X)=\bbZ_{\mathrm{gcd}\{s,m\}}$; just as the orbit $\bbZ_N(X)$ equals $\{X,\tau(X),...,\tau^{N/s-1}(X)\}$, the orbit $\bbZ_mX$ equals $\{X,\tau^{N/m}(X),...,\tau^{N/\mathrm{gcd}\{s,m\}-N/m}(X)\}$.

Let $Z=Z(d',+)Z(d'',+)$, and set $m=\mathrm{lcm}\{m_{d'},m_{d''}\}$ and $r=\mathrm{gcd}\{m_{d'},m_{d''}\}$. Suppose $Z_{X,Y}\ne 0$, where stab$_{\bbZ_N}(X)=\bbZ_s$. Then there is some $W$ such that both $Z(d')_{X,W}\ne0,Z(d'')_{W,Y}\ne0$. Thus (using Proposition~\ref{prop:modSc})
$$Z_{X,Y}=\sum_{i=0}^{s-1}Z(d')_{X,\tau^iW}Z(d'')_{Y,\tau^iW}=\mathrm{gcd}\{s,m_{d'}\}\,\mathrm{gcd}\{s,m_{d''}\}\,\mathrm{gcd}\left\{\frac{m_{d'}}{\mathrm{gcd}\{s,m_{d'}\}},\frac{m_{d''}}{\mathrm{gcd}\{s,m_{d''}\}}\right\}$$
We claim that that equals $r\,\mathrm{gcd}\{s,m\}$. To see that, let $p^\sigma,p^{\mu'},p^{\mu''},p^\mu,p^\rho$ be the exact powers of a prime $p$ dividing $s,m_{d'},m_{d''},m,r$ respectively. We may assume without loss of generality that $\mu'\le\mu''$, so $\mu=\mu''$ and $\rho=\mu'$. If $\sigma\le\mu'$, then the $p$th power of both expressions equals $\sigma+\sigma+(\mu''-\sigma)=\sigma+\mu''$. If instead $\mu'\le\sigma\le\mu''$, then the $p$th power becomes $\mu'+\sigma+0=\mu'+\sigma$. And finally, if $\mu''\le\sigma$, then the $p$th power becomes $\mu'+\mu''+0+0=\mu'+\mu'' $. Note that this is $r$ times the entry $Z(m)_{X,X}$.

Next, suppose $X$ has $Z_{X,Y}\ne 0$ for some $Y$. Again, that means $Z(d')_{X,W}\ne0,Z(d'')_{W,Y}\ne0$ for some $W$. We want to show  that this can only happen if $m$ divides $t(X)$, i.e. iff both $m_{d'}$ and $m_{d''}$ divide $t(X)$. But  $Z(d',+)_{X,W}\ne0$ requires $m_{d'}$ to divide $t(X)$, and $Z(d'',+)_{W,Y}\ne0$ requires $m_{d''}$ to divide $t(W)$. We know that $W=\prod_p\tau^{h'_pt(X)\ell_pN_p}X$, so we know $m_{d''}$ must divide $kh'_pt(X)\ell_pN_p+t(X)\equiv h_p't(X)+t(X)\equiv \vv{a}(d')_pt(X)$ (mod $p^{\nu_p}$), and we're done. 

Conversely, suppose $m$ divides $t(X)$. Then $m_{d'}$ divides $t(X)$ so $Z(d',+)_{X,W}\ne0$ where $W=\prod_p\tau^{h'_pt(X)\ell_pN_p}X$. As before, $t(W)\equiv \vv{a}(d')_pt(X)$ (mod $p^{\nu_p}$), so Proposition 1 gives us $Z(d'')_{W,Y}\ne0$ where $$Y=\prod_p \tau^{h''_pt(W)\ell_pN_p}W=\prod_p \tau^{h''_p\vv{a}(d')_pt(X)\ell_pN_p+h'_pt(X)\ell_pN_p}X\,.$$
Let $d$ be the unique divisor of $N$ (or $N/2$ if $N$ is even but $k$ is odd) with $m_d=m$ and $\vv{a}(d)_p=\vv{a}(d')_p\vv{a}(d'')_p$. If $\vv{a}(d')_p=1$ then $h''_p=h_p$ and $h''_p\vv{a}(d')_p+h'_p=h_p$. If  $\vv{a}(d')_p=-1$ then $h''_p\vv{a}(d')_p+h'_p$ equals $-2$ (if $\vv{a}(d'')_p=1$) or 0 (if $\vv{a}(d'')_p=-1$). In all cases, we get $Y=\prod_p \tau^{h_pt(X)\ell_pN_p}X$, in agreement with what we'd get for $Z(d,+)$.

Finally, we need to show $Z_{X,\tau^{Nq/m}Y}=Z_{X,Y}$ for any $q\in\bbZ$, where $X,Y$ are as in the previous paragraph. Write $\tau^{Nq/m}=\tau^{Nq'/m_{d'}}\tau^{Nq''/m_{d''}}=(\prod_p \tau^{Nq'_p/p^{\mu'_p}})\tau^{Nq''/m_{d''}}$ and define $\bar{q}'$ by  $\bar{q}'N/m_{d'}=\sum_p\vv{a}(d'')_pq'_pN/p^{\mu'_p}$. Then  $Z(d',+)_{X,\tau^iW}=Z(d')_{X,\tau^{\bar{q}'N/m_{d'}}\tau^iW}$ and $Z(d'',+)_{\tau^{\bar{q}'N/m_{d'}}\tau^iW,\tau^{Nq/m}Y}=Z(d'',+)_{\tau^{\bar{q}'N/m_{d'}}\tau^iW,\tau^{Nq'/m_{d'}}Y}$  using the $\bbZ_{m_{d'}}$ and $\bbZ_{m_{d''}}$ invariance of $Z(d',+)$ resp.\ $Z(d'',+)$ promised by Proposition~\ref{prop:modSc}. But
$$(\sum_ph''_pt(\tau^{\bar{q}'N/m_{d'}})\ell_pN_p)+{\bar{q}'N/m_{d'}}=\sum_p(h''_p\vv{a}(d'')_pq_p'N/p^{\mu_p'})+(\vv{a}(d'')_p{q}'_pN/p^{\mu'_p})=\sum_p{q}'_pN/p^{\mu'_p}=q'N/m_{d'}$$
Thus $Z(d'',+)_{\tau^{\bar{q}'N/m_{d'}}\tau^iW,\tau^{q'N/m_{d'}}}=Z(d'',+)_{\tau^iW,Y}$. Putting all this together, $$Z_{X,\tau^{qN/m}Y}=\sum_{i=0}^{s-1}Z(d',+)_{X,\tau^{\bar{q}'N/m_{d'}}\tau^iW}Z(d'',+)_{\tau^{\bar{q}'N/m_{d'}}\tau^iW,\tau^{q'N/m_{d'}}Y}=\sum_{i=0}^{s-1}Z(d',+)_{X,\tau^iW}Z(d'',+)_{\tau^iW,Y}=Z_{X,Y}$$
and $Z=r\,Z(d,+)$, using Proposition~\ref{prop:modSc}.

\end{proof}
Combining the results of this section, we obtain the proof of our main result, Theorem~\ref{thm:bigmain}.



\section{Small $N$ Classification}\label{sec:spec}
In this section we give a full classification (i.e. for all levels $k$) of indecomposable module categories over $\mathcal{C}(\mathfrak{sl}_N,k)$ where $N\in \{3,4,5,6,7\}$ and all $k\in \mathbb{N}$. We can achieve this due to the work of second author, which classifies all connected \textit{\'etale} algebra objects in $\mathcal{C}(\mathfrak{sl}_N,k)$ for these parameter values \cite{Gannon,newGan}. Hence we can classify such modules by classifying all braided equivalences between the corresponding categories of local modules for these \textit{\'etale} algebra objects. The classification of these braided equivalences was achieved by the first author in \cite{ModPt1}. In this section we work through the details of combining these results to obtain the desired classification results.

\subsection{The $\mathfrak{sl}_3$ case}

We begin with the case of $\mathfrak{sl}_3$. While classification in this case is certainly known (see \cite{Ocneanu}), we are unaware of a reference for the result of Theorem~\ref{thm:spec} in the $\mathfrak{sl}_3$ case. Hence we include a proof here for completeness.

\begin{proof}[Proof of Theorem~\ref{thm:spec} in the case of $\mathfrak{g} = \mathfrak{sl}_3$]

From both \cite{Gannon} and \cite{ModPt1} we have the following table listing all of the \textit{\'etale} algebra objects in $\mathcal{C}(\mathfrak{sl}_3,k)$ for $k\in \{5,9,21\}$, along with the groups of braided autoequivalences of their categories of local modules, and the number of double cosets in these autoequivalence groups with respect to the image of the automorphism group of the corresponding \textit{\'etale} algebra. 

\begin{center}\begin{tabular}{c|c| c c c}
    $\mathcal{C}$ & \textit{\'Etale} Algebras $A$  & $\operatorname{EqBr}(\mathcal{C}_A^0)$ & $|\operatorname{Aut}(A)\backslash\operatorname{EqBr}(\mathcal{C}_A^0)/\operatorname{Aut}(A)|$ \\\hline
     $\mathcal{C}(\mathfrak{sl}_3,5)$ & $\mathbf{1}$ &  $\mathbb{Z}_2^2$ & $ 4$\\
                                      & $A_{\mathfrak{sl}_6}$ &  $\mathbb{Z}_2$ & $ 2$\\\hline 
    $\mathcal{C}(\mathfrak{sl}_3,9)$ & $\mathbf{1}$ &  $\mathbb{Z}_2$ & $ 2$\\
                                      & $A_3$ &  $S_4$ & 4\\
                                       & $A_{\mathfrak{e}_6}$ &  $\mathbb{Z}_2$ &   $1$ or $2$\\ \hline
    $\mathcal{C}(\mathfrak{sl}_3,21)$ & $\mathbf{1}$ &  $\mathbb{Z}_2$ & $ 2$\\
                                      & $A_3$ &  $D_3$ & $2$\\
                                      & $A_{\mathfrak{e}_7}$ &  $\{e\}$ & $ 1$\\
\end{tabular}\end{center}
The second column follows from \cite{Gannon} and the third column follows from \cite[Theorem 1.3 and Theorem 1.4]{ModPt1}. Hence the last column is the only one that requires justification.

For the cases where $A = \mathbf{1}$, we have that $\operatorname{Aut}(\mathbf{1})$ is trivial, and hence every element of $\operatorname{EqBr}(\mathcal{C}_A^0)$ belongs to a distinct double coset. This justifies the first, third, and sixth rows. 

For the case where $A = A_3$ we can apply Lemma~\ref{lem:cosets} when $k\neq 9$ to justify the seventh row. When $k=9$ we have an exceptional autoequivalence group of $\mathcal{C}(\mathfrak{sl}_3,9 )^0_{A_3}$ as seen in the above table. From Subsection~\ref{sec:braidauto} the image of $\mathbb{Z}_3 \cong \operatorname{Aut}(A_3)$ is the subgroup of $S_4$ generated by the permutation $(234)$. A direct computation then shows that there are four double cosets of $S_4$ with respect to the subgroup $\langle (234) \rangle$. This justifies the fourth row.

The non-trivial braided autoequivalence of $\mathcal{C}(\mathfrak{sl}_3,5 )^0_{A_{\mathfrak{sl}_6}}\simeq \mathcal{C}(\mathfrak{sl}_6,1)$ maps $V^{\Lambda_i} \mapsto V^{\Lambda_{-i}}$. From Subsection~\ref{subsub:n+2} we have that the forgetful functor $\operatorname{For}:\mathcal{C}(\mathfrak{sl}_3,5 )^0_{A_{\mathfrak{sl}_6}}\to \mathcal{C}(\mathfrak{sl}_3,5 )$ maps
\begin{align*}
    \operatorname{For}(V^{\Lambda_1}) &\cong V_{\ydiagram{2}} \oplus V_{\ydiagram{5,3}}  \quad & 
   \operatorname{For}(V^{\Lambda_5}) &\cong  V_{\ydiagram{2,2}} \oplus V_{\ydiagram{5,2}}.
\end{align*}
The non-trivial braided autoequivalence of $\mathcal{C}(\mathfrak{sl}_3,5 )^0_{A_{\mathfrak{sl}_6}}$ exchanges $V^{\Lambda_1} \leftrightarrow V^{\Lambda_5}$, but does not preserve the forgetful functor. It follows from Corollary~\ref{cor:ind} that this braided autoequivalence is not induced from an element of $\operatorname{Aut}(A_{\mathfrak{sl}_6})$. Therefore the image of $\operatorname{Aut}(A_{\mathfrak{sl}_6})$ in $\operatorname{EqBr}\left(\mathcal{C}(\mathfrak{sl}_3,5 )^0_{A_{\mathfrak{sl}_6}}\right)$ is trivial, which implies that
\[|\operatorname{Aut}(A_{\mathfrak{sl}_6})\backslash\operatorname{EqBr}(\mathcal{C}(\mathfrak{sl}_3,5)_{A_{\mathfrak{sl}_6}}^0)/\operatorname{Aut}(A_{\mathfrak{sl}_6})| = |\operatorname{EqBr}(\mathcal{C}(\mathfrak{sl}_3,5)_{A_{\mathfrak{sl}_6}}^0)|=2.\]
This justifies the second row.

For the algebra $A_{\mathfrak{e}_6}$ there is a non-trivial braided autoequivalence of $\mathcal{C}(\mathfrak{sl}_3,9 )^0_{A_{\mathfrak{e}_6}}\simeq \mathcal{C}(\mathfrak{e}_6, 1)$ which exchanges the two non-trivial simple objects $g$ and $g^2$. We have that $\operatorname{For}(g) = \operatorname{For}(g^2)$, and hence we can not use Corollary~\ref{cor:ind} to argue that the image of $\operatorname{Ind}$ is trivial in this case. Hence we have an ambiguity in that 
\[\operatorname{Aut}(A_{\mathfrak{e}_6})\backslash\operatorname{EqBr}(\mathcal{C}(\mathfrak{sl}_3,9)_{A_{\mathfrak{e}_6}}^0)/\operatorname{Aut}(A_{\mathfrak{e}_6})\]
may contain either 1 or 2 double cosets. This justifies the fifth row. This ambiguity will be resolved at the end of the proof.

For the case of the algebra $A_{\mathfrak{e}_7}$, there is a unique element of $\operatorname{EqBr}(\mathcal{C}(\mathfrak{sl}_3,21 )^0_{A_{\mathfrak{e}_7}})$, and so there can be only one double coset with respect to the image of $\operatorname{Aut}(A_{\mathfrak{e}_6})$.

For each $\mathcal{C}(\mathfrak{sl}_3,k)$ under consideration, all the \textit{\'etale} algebra objects in $\mathcal{C}$ have distinct dimensions, thus there are no braided equivalences $\mathcal{C}_{A_1}^0 \to\mathcal{C}_{A_2}^0 $ for $A_1 \neq A_2$. Hence the information in the above table counts all triples $(A_1, \mathcal{F}, A_2)$ up to equivalence, and hence by \cite[Corollary 3.8]{triples} classifies indecomposable semisimple modules categories over each of the categories in question, up to the ambiguity in the $k=9$ case. 

We now deal with the ambiguity in the $k=9$ case. Our computations show that there are at most 8 (but potentially 7) indecomposable module categories over $\mathcal{C}(\mathfrak{sl}_3,9 )$ up to equivalence. We find 8 module categories over $\mathcal{C}(\mathfrak{sl}_3,9 )$ constructed in the literature \cite{SU3, lance}. To show that these module categories are all distinct, we note that the fusion graph for action by $V_{\ydiagram{1}}$ is an invariant of an $\mathcal{C}(\mathfrak{sl}_3,9 )$ module. These fusion graphs are pairwise distinct for these 8 modules, and so they are non-equivalent.

This completes the proof of Theorem~\ref{thm:spec} in the case of $\mathfrak{g} = \mathfrak{sl}_3$.
\end{proof}




\subsection{The $\mathfrak{sl}_4$ case}

We now deal the case of $\mathfrak{sl}_4$. The classification in this case was claimed in \cite{Ocneanu}. Here we give a proof for completeness. Note that in \cite{dan}, the explicit structure for these module categories is determined.

\begin{proof}[Proof of Theorem~\ref{thm:spec} in the case of $\mathfrak{g} = \mathfrak{sl}_4$]
We have the following information from \cite{Gannon, ModPt1}:
\begin{center}\begin{tabular}{c|c| c c c}
    $\mathcal{C}$ & \textit{\'Etale} Algebras $A$  & $\operatorname{EqBr}(\mathcal{C}_A^0)$ & $|\operatorname{Aut}(A)\backslash\operatorname{EqBr}(\mathcal{C}_A^0)/\operatorname{Aut}(A)|$ \\\hline
     $\mathcal{C}(\mathfrak{sl}_4,4)$ & $\mathbf{1}$ &  $\mathbb{Z}_2$ & $2$\\
                                        & $A_2$ &  $\mathbb{Z}_2^3$ & $4 $\\
                                      & $A_{\mathfrak{so}_{15}}$ &  $\{e\}$ & $1$\\\hline 
    $\mathcal{C}(\mathfrak{sl}_4,6)$ & $\mathbf{1}$ &  $\mathbb{Z}_2^2$ & $ 4$\\
                                      & $A_2$ &  $\mathbb{Z}_2^2$ & $2$\\
                                       & $A_{\mathfrak{sl}_{10}}$ &  $\mathbb{Z}_2$ & $2$\\ \hline
    $\mathcal{C}(\mathfrak{sl}_4,8)$ & $\mathbf{1}$ &  $\mathbb{Z}_2$ & $ 2$\\
                                      & $A_2$ &  $\mathbb{Z}_2^2$ & $2$\\
                                       & $A_4$ &  $S_4$ & $3$\\
                                      & $A_{\mathfrak{so}_{20}}$ &  $\mathbb{Z}_2$ &  $1$ or $2$\\
\end{tabular}\end{center}
As in the $\mathfrak{sl}_3$ case, we just have to justify the final column.

The cases where $A = \mathbf{1}$ and $A =A_m$ (with $k\neq 8$ and $m\neq 4$) follow from Lemma~\ref{lem:cosets}.

The group $\operatorname{EqBr}(\mathcal{C}(\mathfrak{sl}_4,4 )_{A_{\mathfrak{so}{15}}})$ is trivial, hence there is a unique double coset in 
\[\operatorname{Aut}(A_{\mathfrak{so}_{15}})\backslash\operatorname{EqBr}(\mathcal{C}(\mathfrak{sl}_4,4 )_{A_{\mathfrak{so}_{15}}}^0)/\operatorname{Aut}(A_{\mathfrak{so}_{15}}).\]

The non-trivial element of $\operatorname{EqBr}(\mathcal{C}(\mathfrak{sl}_4,6 )_{A_{\mathfrak{sl}_{10}}}^0)\simeq \mathcal{C}(\mathfrak{sl}_{10},1)$ is defined on objects by $V^{\Lambda_i} \mapsto V^{\Lambda_{-i}}$. From Subsection~\ref{subsub:n+2} we have the branching rules
\[\operatorname{For}(V^{\Lambda_1}) = V_{\ydiagram{2}} \oplus  V_{\ydiagram{6,6,2}}   \oplus  V_{\ydiagram{5,3,2}} \qquad \qquad   \operatorname{For}(V^{\Lambda_9}) = V_{\ydiagram{2,2,2}} \oplus  V_{\ydiagram{6,4}}   \oplus  V_{\ydiagram{5,3,2}}. \]
The forgetful functor is not preserved by the non-trivial braided autoequivalence of $\mathcal{C}(\mathfrak{sl}_{10},1 )$, and so the image of $\operatorname{Aut}(A_{\mathfrak{sl}_{10}})$ in $\operatorname{EqBr}(\mathcal{C}(\mathfrak{sl}_4,6 )_{A_{\mathfrak{sl}_{10}}}^0)$ is trivial. It follows that 
\[|\operatorname{Aut}(A_{\mathfrak{sl}_{10}})\backslash\operatorname{EqBr}(\mathcal{C}(\mathfrak{sl}_4,6 )_{A_{\mathfrak{sl}_{10}}}^0)/\operatorname{Aut}(A_{\mathfrak{sl}_{10}})| = 2.\]

For the algebra $A_{\mathfrak{so}_{20}}$ we have $\operatorname{EqBr}(\mathcal{C}(\mathfrak{sl}_4,8 )_{A_{\mathfrak{so}_{20}}}^0) \cong \mathbb{Z}_2$. In this case Corollary~\ref{cor:ind} is not strong enough to show that the image of $\operatorname{Aut}(A_{\mathfrak{so}_{20}})$ is trivial. Depending on if this image is trivial or not, we have the two possibilities \[|\operatorname{Aut}(A_{\mathfrak{so}_{20}})\backslash\operatorname{EqBr}(\mathcal{C}(\mathfrak{sl}_4,8 )_{A_{\mathfrak{so}_{20}}}^0)/\operatorname{Aut}(A_{\mathfrak{so}_{20}})| \in \{1,2\}.\]
We will resolve this ambiguity at the end of the proof.

For the algebra $A_4\in \mathcal{C}(\mathfrak{sl}_4,8 )$, we have the exceptional braided autoequivalence group 
\[\operatorname{EqBr}(\mathcal{C}(\mathfrak{sl}_4,8 )_{A_4}^0) \cong S_4.\]
From Subsection~\ref{sec:braidauto} and Proposition~\ref{prop:imgpt} we have that the image of $\operatorname{Aut}(A_4)$ in $S_4$ is the subgroup $\langle (1234) \rangle$. A direct computation shows that there are three double cosets in $S_4$ with respect to the subgroup $\langle (1234) \rangle$. Hence
\[|\operatorname{Aut}(A_4)\backslash\operatorname{EqBr}(\mathcal{C}(\mathfrak{sl}_4,8 )_{A_4}^0)/\operatorname{Aut}(A_4)| = 3.\]

By considering global dimensions, we have that the categories $\mathcal{C}(\mathfrak{sl}_4, k)^0_A$ are pairwise distinct for the algebras in question. There every triple is of the form $(A, \mathcal{F}, A)$. The above table shows that there are 7 distinct triples for $\mathcal{C}(\mathfrak{sl}_4, 4)$, 8 for $\mathcal{C}(\mathfrak{sl}_4, 6)$ and 8 or 9 for $\mathcal{C}(\mathfrak{sl}_4, 8)$. The statement of Theorem~\ref{thm:spec} in the $\mathfrak{sl}_4$ case now follows from \cite[Corollary 3.8]{triples}, up to the ambiguity in the $k=8$ case.

In the case of $k=8$, we have deduced that there are either 8 or 9 equivalence classes of indecomposable module categories over $\mathcal{C}(\mathfrak{sl}_4,8 )$. In \cite{dan} 9 non-pairwise equivalent module categories over $\mathcal{C}(\mathfrak{sl}_4,8 )$ are constructed. This resolves the ambiguity.
\end{proof}



\subsection{The $\mathfrak{sl}_5$ case}
The case of $\mathfrak{sl}_5$ follows in a somewhat similar manner to the two previous cases. As usual, the complications arise from not knowing the image of $\operatorname{Aut}(A)$ in $\operatorname{EqBr}\left( \mathcal{C}(\mathfrak{sl}_5, k)^0_A             \right)$ in general. In particular the algebra $A = A_{\mathfrak{so}_{24}}$ causes issues in this case. Before we get into the proof of Theorem~\ref{thm:spec}, we will study this algebra.

To future-proof, we will work with the family of \textit{\'etale} algebras $A_{\mathfrak{so}_{N^2-1}}\in \mathcal{C}(\mathfrak{sl}_N, N)$. We have the equivalence
\[\mathcal{C}(\mathfrak{sl}_N, N)_{A_{\mathfrak{so}_{N^2-1}}}^0 \simeq \mathcal{C}(\mathfrak{so}_{N^2-1}, 1).\]
In the case that $N$ is odd, the category $\mathcal{C}(\mathfrak{so}_{N^2-1}, 1)$ has the two spinor representation $S^{\pm}$, and there is a braided autoequivalance exchanging $S^+ \leftrightarrow S^-$. As described in Subsection~\ref{subsub:n}, we have that $\operatorname{For}(S^+) \cong \operatorname{For}(S^-)$, and so Corollary~\ref{cor:ind} is not sufficient to show that this non-trivial braided autoequivalence is not induced from an element of $\operatorname{Aut}\left(A_{\mathfrak{so}_{N^2-1}}\right)$. To show this braided autoequivalence $S^+ \leftrightarrow S^-$ is not induced from an element of $\operatorname{Aut}\left(A_{\mathfrak{so}_{N^2-1}}\right)$, we can use the presentation of the tensor category $\mathcal{C}(\mathfrak{sl}_N, N)_{A_{\mathfrak{so}_{N^2-1}}}$ obtained in \cite{pres}.

The generators and relations presentation for $\mathcal{C}(\mathfrak{sl}_N, N)_{A_{\mathfrak{so}_{N^2-1}}}$ is given in terms of tensor powers of the simple object $X:= \mathcal{F}_{A_{\mathfrak{so}_{N^2-1}}}(V_\square)$. The generating morphisms of this presentation are
\[   \att{braidX}{.2} \in \operatorname{End}(X\otimes X), \quad  \att{splittingEq}{.2} \in \operatorname{End}(X\otimes X^*) , \quad  \att{kw}{.25} \in \operatorname{Hom}(X^{\otimes N} \to \mathbf{1}).    \]
We have the decomposition
\[     X\otimes X^* \cong \mathbf{1} \oplus g \oplus Y_1 \oplus Y_2     \]
where $g$ is an invertible object, and the $Y_i$ are simple objects with 
\[ \dim(Y_i) = \frac{-1 - e^{2\pi i\frac{1}{N}} }{(1 - e^{2\pi i\frac{1}{2N}})^2}.\] 
Furthermore, the morphism $\att{splittingEq}{.2}$ is the unique projection onto the $g$ summand of $X\otimes X^*$.
\begin{thm}\label{thm:so}
    Let $N\geq 4$, and $\mathcal{F} \in \operatorname{Eq}\left(    \mathcal{C}(\mathfrak{sl}_N, N)_{A_{\mathfrak{so}_{N^2-1}}}\right)$ be a pivotal functor such that $\mathcal{F}(X) \cong X$. Then $\mathcal{F}\cong \operatorname{Id}$.
\end{thm}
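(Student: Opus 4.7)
The plan is to exploit the generators-and-relations presentation of $\mathcal{C}(\mathfrak{sl}_N, N)_{A_{\mathfrak{so}_{N^2-1}}}$ from \cite{pres}. Since the pivotal tensor category is generated, as a pivotal tensor category, by the object $X$ together with the three morphisms $\att{braidX}{.2}$, $\att{splittingEq}{.2}$, and $\att{kw}{.25}$, it suffices to exhibit a natural monoidal isomorphism $\eta : \mathcal{F}(X) \to X$ such that, after transporting along $\eta$, the functor $\mathcal{F}$ sends each of the three generating morphisms to itself. Any such natural isomorphism can be rescaled by a single parameter $c \in \mathbb{C}^{\times}$, and pivotality forces the induced isomorphism $\mathcal{F}(X^*) \to X^*$ to be its transposed inverse; this is the only freedom in the normalisation.

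First I would handle $\att{splittingEq}{.2}$. Since $\mathcal{F}$ is a monoidal equivalence it preserves the decomposition $X \otimes X^* \cong \mathbf{1} \oplus g \oplus Y_1 \oplus Y_2$ and the Frobenius--Perron dimensions of the summands. The summand $g$ is the unique invertible (dimension $1$) summand, so $\mathcal{F}(g) \cong g$, and the idempotent projection onto the $g$-isotypic component of $X \otimes X^*$ is unique. Hence $\mathcal{F}(\att{splittingEq}{.2}) = \att{splittingEq}{.2}$. Observe that this equality is independent of the scalar $c$ in $\eta$, since rescaling $\eta$ acts trivially on endomorphisms of $X \otimes X^*$.

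Next I would use the freedom $c$ to fix the Kauffman-type morphism $\att{kw}{.25}$. The relations of \cite{pres} imply $\operatorname{Hom}(X^{\otimes N}, \mathbf{1})$ is one-dimensional, so $\mathcal{F}(\att{kw}{.25}) = \lambda\, \att{kw}{.25}$ for some $\lambda \in \mathbb{C}^{\times}$. Rescaling $\eta \mapsto c\, \eta$ sends $\lambda$ to $c^{-N}\lambda$, so choosing $c$ with $c^{N} = \lambda$ achieves $\mathcal{F}(\att{kw}{.25}) = \att{kw}{.25}$. Crucially, this rescaling does not disturb the previous step.

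The main obstacle is showing $\mathcal{F}(\att{braidX}{.2}) = \att{braidX}{.2}$, since this endomorphism is not pinned down by dimension-counting alone. The plan is to decompose $X \otimes X$ into its isotypic components $\mathbf{1} \oplus g \oplus Y_1 \oplus Y_2$ and note that, because the local-module subcategory $\mathcal{C}(\mathfrak{sl}_N, N)_{A_{\mathfrak{so}_{N^2-1}}}^0 \simeq \mathcal{C}(\mathfrak{so}_{N^2-1}, 1)$ carries a pivotal structure inherited from $\mathcal{C}(\mathfrak{sl}_N, N)$, the element $\att{braidX}{.2}$ acts as a ribbon twist on each summand with eigenvalues $\theta_{\mathbf{1}}, \theta_{g}, \theta_{Y_1}, \theta_{Y_2}$. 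Since $\mathcal{F}$ is pivotal, it preserves categorical traces and hence these ribbon eigenvalues; combined with the relations satisfied by $\att{braidX}{.2}$ together with $\att{splittingEq}{.2}$ and $\att{kw}{.25}$ (which $\mathcal{F}$ now preserves), one obtains that $\mathcal{F}(\att{braidX}{.2})$ differs from $\att{braidX}{.2}$ only by an automorphism of $X \otimes X$ that commutes with the isotypic decomposition and with $\att{splittingEq}{.2}$; checking that this forces the automorphism to be trivial (for $N \geq 4$, using that the eigenvalues $\theta_{\bullet}$ are distinct, which follows from the explicit values given in \cite{pres}) finishes the argument. Once the three generating morphisms are fixed, the presentation in \cite{pres} yields $\mathcal{F} \cong \operatorname{Id}$.
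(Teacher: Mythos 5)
Your handling of the first two generating morphisms matches the paper: $\mathcal{F}$ preserves the unique invertible summand $g$ of $X\otimes X^*$ (using $\dim(Y_i)\ne 1$ for $N\ge 4$), so the projection is fixed, and the one-dimensionality of $\operatorname{Hom}(X^{\otimes N},\mathbf{1})$ plus a rescaling fixes the second generator. The final appeal to the presentation/Karoubi generation is also consistent with the paper.

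The gap is in the braid step, and it is a real one. First, the decomposition $\mathbf{1}\oplus g\oplus Y_1\oplus Y_2$ is of $X\otimes X^*$, not $X\otimes X$, so those summands and their twists are not the relevant ones for diagonalising $\att{braidX}{.2}\in\operatorname{End}(X\otimes X)$. More seriously, $\mathcal{C}(\mathfrak{sl}_N,N)_{A_{\mathfrak{so}_{N^2-1}}}$ is not braided and $X$ is not a local module, so $\att{braidX}{.2}$ is not a ribbon twist; it is a presentation generator satisfying a Hecke-type quadratic relation, and therefore has at most two eigenvalues even though $X\otimes X$ has four simple summands. Consequently, the eigenvalues of $\att{braidX}{.2}$ are \emph{not} distinct across the summands, and ``preservation of eigenvalues'' is not enough to pin down $\mathcal{F}(\att{braidX}{.2})$. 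Indeed, the paper shows that once $\mathcal{F}$ fixes $\att{splittingEq}{.2}$ and $\att{kw}{.25}$, the relations (Stack), (Commute), (Hecke) still leave two solutions --- the identity and the level-rank duality map sending the braid to the negative of its inverse, which permutes the eigenvalue assignment without changing the spectrum --- and the (Half-Braid) relation is needed to exclude the second. Your argument never invokes anything playing the role of (Half-Braid), and there is no eigenvalue-distinctness to fall back on, so the crucial case is not ruled out. To repair this you would need to write $\mathcal{F}(\att{braidX}{.2})$ in the explicit basis of $\operatorname{End}(X\otimes X)$ from \cite[Corollary~5.7]{pres} and solve all of the presentation relations, as the paper does.
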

\begin{proof}
    We first observe that as $\mathcal{F}(X) \cong X$, we have that 
    \[   \mathcal{F}\left(\att{braidX}{.2}\right) \in \operatorname{End}(X\otimes X), \quad \mathcal{F}\left( \att{splittingEq}{.2}\right) \in \operatorname{End}(X\otimes X^*) , \quad  \mathcal{F}\left(\att{kw}{.2}\right) \in \operatorname{Hom}(X^{\otimes N} \to \mathbf{1}).    \]
    Supposing $N\geq 4$, then we have that $\dim(Y_i) \neq 1$ using the above dimension formula. As $\mathcal{F}(X\otimes X^*) \cong X\otimes X^*$ it follows that $\mathcal{F}(g) \cong g$. The multiplicity of $g$ in $X\otimes X^*$ is 1, and so 
    \[ \mathcal{F}\left( \att{splittingEq}{.2}\right)  =     \att{splittingEq}{.2} \]
    where we recall that $\att{splittingEq}{.2}$ is the unique projection onto $g$ in $\operatorname{End}(X\otimes X^*)$.

    We have that $ \operatorname{Hom}(X^{\otimes N} \to \mathbf{1})$ is 1-dimensional, and thus 
    \[  \mathcal{F}\left(\att{kw}{.2}\right) = \omega \att{kw}{.2}   \]
    for some $\omega \in \mathbb{C}^\times$. Applying a natural isomorphism to $\mathcal{F}$ allows us to assume that $\omega = 1$.

    From \cite[Corollary 5.7]{pres}, we have that the set
    \[   \left\{\att{idid}{.2}, \quad \att{braidX}{.2},\quad  \att{splittingEnd2}{.2} ,\quad \att{basis4}{.2} \right\}          \]
    is a basis for $\operatorname{End}(X\otimes X)$, where
    \[      \att{splittingEnd2}{.2}:= \frac{2  i}{q-q^{-1}}\att{splittingEnd}{.2}.       \]
    We thus have
    \[     \mathcal{F}\left(\att{braidX}{.2}\right) = \alpha \att{idid}{.2}+\beta \att{braidX}{.2}+\gamma \att{splittingEnd2}{.2} +\lambda \att{basis4}{.2}         \]
    for some $\alpha, \beta, \gamma, \lambda \in \mathbb{C}$. Using the relations of \cite[Definition 1.3 and Lemma 5.2]{pres}, we can solve to find that $\alpha = \gamma = \lambda = 0$ and $\beta = 1$ is the only solution. Indeed one sees that the element $\att{splittingEnd2}{.2}$ is mapped to a $\beta + \gamma + \lambda \frac{\mathbf{i}(q-q^{-1})}{2}$ scalar multiple of itself. From the relation (Stack), this scalar must be $\pm 1$. In the $-1$ case one solves the (Commute) relation to find $\beta = 0$ and $\gamma = \mathbf{i} \alpha$. Solving the (Hecke) relation reveals no solutions in this case. In the $+1$ case, one solves (Commute) to find $\lambda = 0$ and $\gamma  = -\mathbf{i}\alpha$. Solving for (Hecke) in this case then shows that either we have the trivial solution, or the level-rank duality solution. That is, the map which sends the braid to the negative of its inverse. The level-rank duality solution does not preserve the (Half-Braid) relation, and hence we only have the trivial solution.
    
    Hence $\mathcal{F}$ restricts to the identity functor on the full subcategory of $\mathcal{C}(\mathfrak{sl}_N, N)_{A_{\mathfrak{so}_{N^2-1}}}$ generated by tensor powers of $X$. As $X$ Karoubi generates $\mathcal{C}(\mathfrak{sl}_N, N)_{A_{\mathfrak{so}_{N^2-1}}}$, it follows from \cite[Equations 7 and 9]{comes} that $\mathcal{F} \cong \operatorname{Id}$ as claimed.
\end{proof}

We now obtain the desired corollary which deduces the image of $\operatorname{Aut}\left(A_{\mathfrak{so}_{N^2-1}}\right)$.
\begin{cor}\label{cor:indso}
    Let $N\geq 4$. Then the image of $\operatorname{Aut}\left(A_{\mathfrak{so}_{N^2-1}}\right)$ in $\operatorname{EqBr}\left(     \mathcal{C}(\mathfrak{sl}_N, N)_{A_{\mathfrak{so}_{N^2-1}}}^0        \right)$ is trivial.
\end{cor}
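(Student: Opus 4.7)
The plan is to reduce the statement to Theorem~\ref{thm:so} via the following three-step argument. Fix $\phi \in \operatorname{Aut}(A_{\mathfrak{so}_{N^2-1}})$, and consider the induced monoidal autoequivalence $\operatorname{Ind}(\phi)$ of $\mathcal{C}(\mathfrak{sl}_N, N)_{A_{\mathfrak{so}_{N^2-1}}}$ from Lemma~\ref{lem:induce}; recall that this is a pivotal monoidal functor whose restriction to the local subcategory is precisely the image of $\phi$ under $\operatorname{Ind}$ that we wish to analyse.

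First I would check that $\operatorname{Ind}(\phi)(X) \cong X$, where $X = \mathcal{F}_{A_{\mathfrak{so}_{N^2-1}}}(V_\square)$ is the generator used in the presentation of \cite{pres}. The easy way is to exhibit an explicit module isomorphism. By definition, $\operatorname{Ind}(\phi)(X)$ has the same underlying object $V_\square \otimes A_{\mathfrak{so}_{N^2-1}}$ in $\mathcal{C}(\mathfrak{sl}_N, N)$, but with right action twisted by $\operatorname{id} \otimes \phi$. The morphism $\operatorname{id}_{V_\square} \otimes \phi : \mathcal{F}_{A_{\mathfrak{so}_{N^2-1}}}(V_\square) \to \operatorname{Ind}(\phi)(\mathcal{F}_{A_{\mathfrak{so}_{N^2-1}}}(V_\square))$ is then a module isomorphism, as follows from a direct check using the algebra homomorphism identity $m_{A_{\mathfrak{so}_{N^2-1}}} \circ (\phi \otimes \phi) = \phi \circ m_{A_{\mathfrak{so}_{N^2-1}}}$. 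Alternatively, one can simply quote that any monoidal autoequivalence preserves the image of the free module functor.

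Second, with $\operatorname{Ind}(\phi)(X) \cong X$ established, Theorem~\ref{thm:so} applies and forces $\operatorname{Ind}(\phi) \cong \operatorname{Id}$ as pivotal monoidal functors on all of $\mathcal{C}(\mathfrak{sl}_N, N)_{A_{\mathfrak{so}_{N^2-1}}}$. Restricting this isomorphism to the braided subcategory $\mathcal{C}(\mathfrak{sl}_N, N)_{A_{\mathfrak{so}_{N^2-1}}}^0$ yields $\operatorname{Ind}(\phi) \cong \operatorname{Id}$ there as well, which is exactly the claim that the image of $\operatorname{Aut}(A_{\mathfrak{so}_{N^2-1}})$ in $\operatorname{EqBr}(\mathcal{C}(\mathfrak{sl}_N, N)_{A_{\mathfrak{so}_{N^2-1}}}^0)$ is trivial.

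I do not expect any real obstacle here: the hard work has already been done in Theorem~\ref{thm:so}. The only subtle point is ensuring that $\operatorname{Ind}(\phi)(X) \cong X$ as a module rather than merely as an object of $\mathcal{C}(\mathfrak{sl}_N,N)$ (which is the weaker content of Corollary~\ref{cor:ind}). The free module argument sketched above resolves this cleanly, so the corollary falls out as a short application of the machinery already in place.
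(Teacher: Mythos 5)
Your proof is correct and reduces to Theorem~\ref{thm:so} in essentially the same way as the paper. The one place you depart is how you verify $\operatorname{Ind}(\phi)(X)\cong X$: the paper uses Corollary~\ref{cor:ind} to show $\operatorname{Ind}(\phi)$ commutes with the forgetful functor, then the free-forget adjunction to produce a nonzero map $\operatorname{Ind}(\phi)(X)\to X$, and finally the simplicity of $X$ to upgrade this to an isomorphism; you instead write down the explicit module isomorphism $\operatorname{id}_{V_\square}\otimes\phi$, which is a direct verification needing neither the adjunction nor the simplicity of $X$ (and in fact shows $\operatorname{Ind}(\phi)$ commutes naturally with the whole free-module functor). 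That is a clean small improvement. One caveat: your parenthetical alternative, that \emph{any} monoidal autoequivalence of $\mathcal{C}_A$ preserves the image of the free module functor, is not true in that generality and shouldn't be cited as a fallback; the correct general statement is that autoequivalences commuting with the forgetful functor do so (via the adjunction), which is what the paper exploits. Since your main argument does not rely on that aside, this does not affect the validity of the proof.
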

\begin{proof}
Let $\mathcal{F}$ be a braided autoequivalence of $ \mathcal{C}(\mathfrak{sl}_N, N)_{A_{\mathfrak{so}_{N^2-1}}}^0 $ in the image of $\operatorname{Ind}$. It follows from Lemma~\ref{lem:induce} and Corollary~\ref{cor:ind} that $\mathcal{F}$ extends to a pivotal autoequivalence of $ \mathcal{C}(\mathfrak{sl}_N, N)_{A_{\mathfrak{so}_{N^2-1}}} $ which preserves the forgetful functor.

We define $Y := \mathcal{F}(  X  )$. As $\mathcal{F}$ preserves the forgetful functor, we have that $\operatorname{For}(Y) \cong \operatorname{For}(X) \cong A_{\mathfrak{so}_{N^2-1}} \otimes V_\square$. By the adjunction between the free module functor and the forgetful functor we get
\[  \operatorname{dimHom}_{ \mathcal{C}(\mathfrak{sl}_N, N)_{A_{\mathfrak{so}_{N^2-1}}}}(Y \to X  )  = \operatorname{dimHom}_{ \mathcal{C}(\mathfrak{sl}_N, N)}(A_{\mathfrak{so}_{N^2-1}} \otimes V_\square \to V_\square ) \geq 1.       \]
 As $X$ is simple and $\mathcal{F}$ is an equivalence, we get that $Y$ is simple. Thus $X\cong Y$ and the statement follows immediately from Theorem~\ref{thm:so}.   
\end{proof}

With the above Corollary, we are now in place to prove Theorem~\ref{thm:spec} in this case.

\begin{proof}[Proof of Theorem~\ref{thm:spec} in the case of $\mathfrak{g} = \mathfrak{sl}_5$]

We have the following information from \cite{Gannon,ModPt1}:
\begin{center}\begin{tabular}{c|c| c c c}
    $\mathcal{C}$ & \textit{\'Etale} Algebras $A$  & $\operatorname{EqBr}(\mathcal{C}_A^0)$ & $|\operatorname{Aut}(A)\backslash\operatorname{EqBr}(\mathcal{C}_A^0)/\operatorname{Aut}(A)|$ \\\hline
      $\mathcal{C}(\mathfrak{sl}_5,3)$ & $\mathbf{1}$ &  $\mathbb{Z}_2^2$ & $ 4$\\
                                       & $A_{\mathfrak{sl}_{10}}$ &  $\mathbb{Z}_2$ & $2$\\ \hline
     $\mathcal{C}(\mathfrak{sl}_5,5)$ & $\mathbf{1}$ &  $\mathbb{Z}_2$ & $2$\\
                                        & $A_{5}$ &  $\operatorname{Alt}(5)$ & $4 $\\
                                      & $A_{\mathfrak{so}_{24}}$ &  $S_3$ & $6$\\\hline 
  
    $\mathcal{C}(\mathfrak{sl}_5,7)$ & $\mathbf{1}$ &  $\mathbb{Z}_2^2$ & $ 4$\\
                                      & $A_{\mathfrak{sl}_{15}}$ &  $\mathbb{Z}_2^2$ & $4$\\
\end{tabular}\end{center}
As usual, we only need to justify the last column. 

When $A$ is pointed (apart from the case of $A = A_5$ when $k=5$), we have the last column from Lemma~\ref{lem:cosets}.

When $A = A_5$ and $k=5$ we have an exceptional braided autoequivalence group $\operatorname{EqBr}(\mathcal{C}(\mathfrak{sl}_5,5 )^0_{A_5}) \cong \operatorname{Alt}(5)$. From Subsection~\ref{sec:braidauto} and Proposition~\ref{prop:imgpt} we have that the image of $\operatorname{Aut}(A_5)$ in $\operatorname{Alt}(5)$ is the subgroup $\langle (12345)\rangle$. A direct computation then shows that
\[|\operatorname{Aut}(A_5)\backslash\operatorname{EqBr}(\mathcal{C}(\mathfrak{sl}_5,5 )_{A_5}^0)/\operatorname{Aut}(A_5)| = 4.\]

When $A = A_{\mathfrak{sl}_{10}}$, we have the non-trivial braided autoequivalence of $\mathcal{C}(\mathfrak{sl}_5,3)^0_{A_{\mathfrak{sl}_{10}}}\simeq \mathcal{C}(\mathfrak{sl}_{10}, 1)$ defined by $V^{\Lambda_i}\mapsto V^{\Lambda_{-i}}$. We have the following branching rules from Subsection~\ref{subsub:n-2}:
\[ \operatorname{For}(V^{\Lambda_1}) \cong V_{\ydiagram{1,1}} \oplus V_{\ydiagram{3,2,2}}\qquad \qquad \operatorname{For}(V^{\Lambda_9}) \cong V_{\ydiagram{1,1,1}} \oplus V_{\ydiagram{3,3,1,1}}.\]
Hence the non-trivial braided autoequivalence of $\mathcal{C}(\mathfrak{sl}_5,3)^0_{A_{\mathfrak{sl}_{10}}}$ does not preserve the forgetful functor, and so Corollary~\ref{cor:ind} gives that this braided autoequivalence is not induced by an element of $\operatorname{Aut}(A_{\mathfrak{sl}_{10}})$. It follows that 
\[|\operatorname{Aut}(A_{\mathfrak{sl}_{10}})\backslash\operatorname{EqBr}(\mathcal{C}(\mathfrak{sl}_5,3)_{A_{\mathfrak{sl}_{10}}}^0)/\operatorname{Aut}(A_{\mathfrak{sl}_{10}})| = 2.\]

When $A = A_{\mathfrak{so}_{24}}$ we can apply Corollary~\ref{cor:indso} to see that the image of $\operatorname{Aut}\left( A_{\mathfrak{so}_{24}} \right)$ in $\operatorname{EqBr}(\mathcal{C}(\mathfrak{sl}_5,5 )^0_{A_{\mathfrak{so}_{24}}})$ is trivial. Hence
\[\operatorname{Aut}(A_{\mathfrak{so}_{24}})\backslash\operatorname{EqBr}(\mathcal{C}(\mathfrak{sl}_5,5 )^0_{A_{\mathfrak{so}_{24}}})/\operatorname{Aut}(A_{\mathfrak{so}_{24}}) = |\operatorname{EqBr}(\mathcal{C}(\mathfrak{sl}_5,5 )^0_{A_{\mathfrak{so}_{24}}})| = 6.\]

When $A = A_{\mathfrak{sl}_{15}}$ we have three non-trivial braided autoequivalences of $\mathcal{C}(\mathfrak{sl}_5,7)^0_{A_{\mathfrak{sl}_{15}}}\simeq \mathcal{C}(\mathfrak{sl}_{15}, 1)$. These three braided autoequivalences map
\begin{align*}
    V^{\Lambda_1} &\mapsto V^{\Lambda_4} \\
    V^{\Lambda_1} &\mapsto V^{\Lambda_{11}} \\
    V^{\Lambda_1} &\mapsto V^{\Lambda_{14}}
    \end{align*}
    respectively. From Subsection~\ref{subsub:n+2} we have the following branching rules:
\begin{align*}
    \operatorname{For}(V^{\Lambda_1}) &\cong V_{\ydiagram{2}}\oplus V_{\ydiagram{7,6,4}}\oplus V_{\ydiagram{7,4,4,2}}\oplus V_{\ydiagram{5,3,2,2}}\oplus V_{\ydiagram{6,6,3,2}} \\
    \operatorname{For}(V^{\Lambda_4}) &\cong  V_{\ydiagram{4,3,1}}\oplus V_{\ydiagram{5,1,1,1}}\oplus V_{\ydiagram{7,5,3,3}}\oplus V_{\ydiagram{6,6,4,2}}\oplus V_{\ydiagram{7,7,7,2}} \\
    \operatorname{For}(V^{\Lambda_{11}}) &\cong V_{\ydiagram{7,5}} \oplus V_{\ydiagram{6,4,2}}\oplus V_{\ydiagram{4,4,3,1}}\oplus V_{\ydiagram{5,4,4,4}}\oplus V_{\ydiagram{7,4,4,2}} \\
    \operatorname{For}(V^{\Lambda_{14}}) &\cong V_{\ydiagram{2,2,2,2}} \oplus V_{\ydiagram{6,4,3}}\oplus V_{\ydiagram{7,7,3,1}}\oplus V_{\ydiagram{7,5,3,3}}\oplus V_{\ydiagram{5,3,3,2}}.
\end{align*}
It is clearly seen that none of the non-trivial braided autoequivalences of $\mathcal{C}(\mathfrak{sl}_{15}, 1)$ preserve the forgetful functor. Hence Corollary~\ref{cor:ind} gives that the image of $\operatorname{Aut}(A_{\mathfrak{sl}_{15}})$ in $\operatorname{EqBr}(\mathcal{C}(\mathfrak{sl}_{15}, 1))$ is trivial. We thus have
\[|\operatorname{Aut}(A_{\mathfrak{sl}_{15}})\backslash\operatorname{EqBr}(\mathcal{C}(\mathfrak{sl}_5,7 )^0_{A_{\mathfrak{sl}_{15}}})/\operatorname{Aut}(A_{\mathfrak{sl}_{15}})| = |\operatorname{EqBr}(\mathcal{C}(\mathfrak{sl}_5,7 )^0_{A_{\mathfrak{sl}_{15}}})|=4.\]

By considering global dimensions, we have that the categories $\mathcal{C}(\mathfrak{sl}_5, k)^0_A$ are pairwise distinct for the algebras in question. There every triple is of the form $(A, \mathcal{F}, A)$. The above table shows that there are 6 distinct triples for $\mathcal{C}(\mathfrak{sl}_5, 3)$, 12 for $\mathcal{C}(\mathfrak{sl}_5, 5)$ and 8 for $\mathcal{C}(\mathfrak{sl}_5, 7)$. The claimed result is now a direct application of \cite[Corollary 3.8]{triples}.

\end{proof}

\subsection{The $\mathfrak{sl}_6$ case}\label{sub:6}

The case of $\mathfrak{sl}_6$ is somewhat special, as we have the first encounter with \textit{\'etale} algebra objects which are not pointed, nor of the form $A_{\mathfrak{g}}$. However these new algebra object are not typically considered exotic, as they can be constructed via well-understood methods. We begin this subsection by constructing these two algebras.

From the conformal embedding 
\[      \mathcal{V}(\mathfrak{sl}_6, 6) \subset   \mathcal{V}(\mathfrak{sp}_{20}, 1)\]
there exists an \textit{\'etale} algebra object $A_{\mathfrak{sp}_{20}}\in  \mathcal{C}(\mathfrak{sl}_6, 6)$. We then have that
\[  \mathcal{C}(\mathfrak{sl}_6, 6)_{A_{\mathfrak{sp}_{20}}}^0 \simeq   \mathcal{C}(\mathfrak{sp}_{20}, 1).\]
The category $\mathcal{C}(\mathfrak{sp}_{20}, 1)$ itself has an  \textit{\'etale} algebra object $\mathbf{1}\oplus V^{\Lambda_6}$\footnote{The existence of this algebra follows from the braided equivalence $\mathcal{C}(\mathfrak{sp}_{20}, 1)^\textrm{ad} \simeq \mathcal{C}(\mathfrak{sl}_{2}, 10)^{\textrm{ad},\textrm{rev}}$, and from the fact that the category $\mathcal{C}(\mathfrak{sl}_{2}, 10)^{\textrm{ad}}$ has the \textit{\'etale} algebra object $\mathbf{1}\oplus V^{6\Lambda_1}$ from \cite{OstMod}.}. As the forgetful functor \[\operatorname{For}: \mathcal{C}(\mathfrak{sp}_{20}, 1)\to \mathcal{C}(\mathfrak{sl}_{6}, 6)\]
is lax-monoidal and braided, we have that $   \operatorname{For}(  \mathbf{1}\oplus V^{\Lambda_6}  ) $ is an \textit{\'etale} algebra object in $\mathcal{C}(\mathfrak{sl}_{6}, 6)$.

\begin{defn}
    We will write $A_{\mathfrak{sp}_{20}}^{\textrm{ext}}:= \operatorname{For}(  \mathbf{1}\oplus V^{\Lambda_6}  )\in \mathcal{C}(\mathfrak{sl}_{6}, 6)$.
\end{defn}

We also obtain an additional \textit{\'etale} algebra object via level-rank-duality. Recall from \cite{MirrorXu, LROst} that there is a braid-reversing equivalence 
\[\mathcal{T}:\mathcal{C}(\mathfrak{sl}_N, k)^\text{ad} \simeq \mathcal{C}(\mathfrak{sl}_k, N)^\text{ad}.\]
By inspection, the \textit{\'etale} algebra $A_{\mathfrak{sp}_{20}}$ lives in the subcategory $\mathcal{C}(\mathfrak{sl}_6, 6)^\text{ad}$. It follows that $\mathcal{T}( A_{\mathfrak{sp}_{20}}  )$ is an  \textit{\'etale} algebra object in $\mathcal{C}(\mathfrak{sl}_6, 6)$.

\begin{defn}
    We will write $A_{\mathfrak{sp}_{20}}^{\textrm{tr}}:= \mathcal{T}(A_{\mathfrak{sp}_{20}})\in \mathcal{C}(\mathfrak{sl}_{6}, 6)$.
\end{defn}

Using the explicit description of the functor $\mathcal{T}$ from \cite[Section 2.1]{LROst} we have
\[      A_{\mathfrak{sp}_{20}}^{\textrm{tr}}\cong \left[ V_\emptyset  \right]_{\mathbb{Z}_3} \oplus    \left[ V_{\ydiagram{6,3,3}} \right]_{\mathbb{Z}_3}\oplus V_{\ydiagram{4,4,2,2}}.  \]

We have from \cite{newGan} (obtained by modular data considerations) that the category $\mathcal{C}(\mathfrak{sl}_{6}, 6)^0_{A_{\mathfrak{sp}_{20}}^{\textrm{tr}}}$ has eleven simple objects, which we label by the set $\{X_i : 0 \leq i \leq 10\}$. This category has the same fusion rules as $\mathcal{C}(\mathfrak{sl}_2, 10)$. We have the following branching rules from \cite{newGan}:
\[
    X_1 \mapsto \left[ V_{\ydiagram{6,3  }}  \right]_{\mathbb{Z}_3}\oplus  \left[ V_{\ydiagram{4,3,2  }}  \right]_{\mathbb{Z}_3}\qquad X_2 \mapsto \left[ V_{\ydiagram{6,4,2  }}  \right]_{\mathbb{Z}_3}\oplus  \left[ V_{\ydiagram{5,3,2,2 }}  \right]_{\mathbb{Z}_3}\qquad X_{9} \mapsto \left[ V_{\ydiagram{3  }}  \right]_{\mathbb{Z}_3}\oplus  \left[ V_{\ydiagram{6,4,3,2 }}  \right]_{\mathbb{Z}_3}.
\]

With these additional algebra objects, we can now prove Theorem~\ref{thm:spec} in this case.

\begin{proof}[Proof of Theorem~\ref{thm:spec} in the case of $\mathfrak{g} = \mathfrak{sl}_6$]

We have the following table from the results of \cite{newGan,ModPt1}. Here we add a $\dag$ label to the rows where $\mathcal{C}_{A}^0 \simeq  \mathcal{C}_{A'}^0$. The $\mathfrak{sl}_6$ case is the first time we encounter this phenomenon. This coincidence of local modules will result in additional module categories over $\mathcal{C}(\mathfrak{sl}_6,6)$.

\begin{center}\begin{tabular}{c|c| c c c c}
    $\mathcal{C}$ & \textit{\'Etale} Algebras $A$  & $\operatorname{EqBr}(\mathcal{C}_A^0)$ & $|\operatorname{Aut}(A)\backslash\operatorname{EqBr}(\mathcal{C}_A^0)/\operatorname{Aut}(A)|$ \\\hline
      $\mathcal{C}(\mathfrak{sl}_6,4)$ & $\mathbf{1}$ &  $\mathbb{Z}_2^2$ & 4\\
                                      &  $A_2$ &  $\mathbb{Z}_2^3$ & 4\\
                                       & $A_{\mathfrak{sl}_{15}}$ &  $\mathbb{Z}_2^2$ & 4\\ \hline
     $\mathcal{C}(\mathfrak{sl}_6,6)$ & $\mathbf{1}$ &  $\mathbb{Z}_2^2$ & 4\\
                                        &$A_3$ &  $D_3\times \mathbb{Z}_2$ & 4\\
                                         & $A_{\mathfrak{so}_{35}}$ &  $\{e\}$ &1 & $\dag$\\         
                                          & $A_{\mathfrak{sp}_{20}}$ &  $\mathbb{Z}_2$ & 2\\         
                                           & $A_{\mathfrak{sp}_{20}}^{\textrm{ext}}$ &  $\{e\}$ & 1&$\dag$\\    
                                   &$A_{\mathfrak{sp}_{20}}^{\textrm{tr}}$ &  $\mathbb{Z}_2$ & 2&\\ \hline 
  
    $\mathcal{C}(\mathfrak{sl}_6,8)$ & $\mathbf{1}$ &  $\mathbb{Z}_2^2$  & 4\\
                                      &  $A_2$ &   $\mathbb{Z}_2^2$ & 4\\
                                       & $A_{\mathfrak{sl}_{21}}$ &  $\mathbb{Z}_2^2$& 4\\ \hline
                                        
\end{tabular}\end{center}

We first justify the claim that $\mathcal{C}(\mathfrak{sl}_6,6)_{A_{\mathfrak{so}_{35}}}^0 \simeq \mathcal{C}(\mathfrak{sl}_6,6)_{A^\textrm{ext}_{\mathfrak{sp}_{20}}}^0 $. We have that $\mathcal{C}(\mathfrak{sl}_6,6)_{A_{\mathfrak{so}_{35}}}^0 \simeq\mathcal{C}(\mathfrak{so}_{35},1)$, which is an Ising category \cite[Appendix B]{braidedI} with central charge $c = \frac{3}{2} \pmod 8$. On the other hand $\mathcal{C}(\mathfrak{sl}_6,6)_{A_{\mathfrak{sp}^{\textrm{ext}}_{20}}}^0 \simeq \mathcal{C}(\mathfrak{sp}_{20},1)_{\mathbf{1}\oplus V^{\Lambda_6}}^0$. There is a moniodal (but not braided) equivalence $\mathcal{C}(\mathfrak{sp}_{20},1)\simeq \mathcal{C}(\mathfrak{sl}_{2},10)$. Hence we have a monoidal equivalence $\mathcal{C}(\mathfrak{sp}_{20},1)_{\mathbf{1}\oplus V^{\Lambda_6}}^0\simeq \mathcal{C}(\mathfrak{sl}_{2},10)_{\mathbf{1}\oplus V^{6\Lambda_1}}^0$. It is well known that $\mathcal{C}(\mathfrak{sl}_{2},10)_{\mathbf{1}\oplus V^{6\Lambda_1}}^0$ is an Ising category, and hence so is $\mathcal{C}(\mathfrak{sp}_{20},1)_{\mathbf{1}\oplus V^{\Lambda_6}}^0$. The braided equivalence $\mathcal{C}(\mathfrak{sl}_6,6)_{A_{\mathfrak{sp}^{\textrm{ext}}_{20}}}^0 \simeq \mathcal{C}(\mathfrak{sp}_{20},1)_{\mathbf{1}\oplus V^{\Lambda_6}}^0$ shows that the central charge of $\mathcal{C}(\mathfrak{sp}_{20},1)_{\mathbf{1}\oplus V^{\Lambda_6}}^0$ is equal to the central charge of $\mathcal{C}(\mathfrak{sl}_6,6)$ \cite[Corollary 3.30]{LagrangeUkraine}. This is computed to be $\frac{3}{2} \pmod 8$. As the central charge (mod 8) is a complete invariant of Ising categories, we have the braided equivalence 
\begin{equation} \label{eq:equiv6}
  \mathcal{C}(\mathfrak{sl}_6,6)_{A_{\mathfrak{so}_{35}}}^0 \simeq \mathcal{C}(\mathfrak{sl}_6,6)_{A^\textrm{ext}_{\mathfrak{sp}_{20}}}^0.    
\end{equation}
Furthermore, as the braided autoequivalence group of an Ising category is trivial \cite[Theorem 1.2]{ADEaut}, there is a unique equivalence between these two categories.

Next we prove that there is no braided equivalence between $\mathcal{C}(\mathfrak{sl}_6,6)_{A_{\mathfrak{sp}_{20}}}^0$ and $\mathcal{C}(\mathfrak{sl}_6,6)_{A_{\mathfrak{sp}_{20}}^\textrm{tr}}^0$. These categories both have $\mathcal{C}(\mathfrak{sl}_2, 10)$ fusion rules. Braided categories with these fusion rules are completely classified in \cite{fro} by $\ell \in \mathbb{Z}_{48}$ such that $\gcd(\ell, 12) =1$. These categories are labeled $\mathcal{C}^\textrm{br}_{10, \ell}$. There are two pivotal structures on each of these categories, and we use the notation $\mathcal{C}^\textrm{br}_{10, \ell,\pm}$ to distinguish these. The numerical data for these categories has been collated in \cite{surv}. In particular we have the following twist values in $\mathcal{C}^\textrm{br}_{10, \ell,\pm}$:
\[   \theta_{X_1} = \mp e^{6 \pi i \frac{\ell + 12}{48}}  \qquad \text{ and }   \theta_{X_2} =  e^{16 \pi i \frac{\ell + 12}{48}}.    \]

From the branching rules at the beginning of this subsection, we have the following twists in $\mathcal{C}(\mathfrak{sl}_6,6)_{A_{\mathfrak{sp}_{20}}^\textrm{tr}}^0$:
\[      \theta_{X_1} =  e^{2 \pi i \frac{15}{16}}     \qquad \text{ and }   \theta_{X_2} =  e^{2 \pi i \frac{1}{6}} .       \]
Hence as a braided pivotal category we have that $\mathcal{C}(\mathfrak{sl}_6,6)_{A_{\mathfrak{sp}_{20}}^\textrm{tr}}^0$ is equivalent to either $\mathcal{C}^\textrm{br}_{10, 13,-}$ or $\mathcal{C}^\textrm{br}_{10, 43,+}$.

On the other hand, we have a braided equivalence $\mathcal{C}(\mathfrak{sl}_6,6)_{A_{\mathfrak{sp}_{20}}}^0\simeq \mathcal{C}(\mathfrak{sp}_{20},1)$. We compute the twists:
\[  \theta_{V_{\Lambda_1}} =            e^{2 \pi i \frac{7}{16}}     \qquad \text{ and }   \theta_{V_{\Lambda_2}} =  e^{2 \pi i \frac{5}{6}} .       \]
This allows us to see that $\mathcal{C}(\mathfrak{sl}_6,6)_{A_{\mathfrak{sp}_{20}}}^0$ is equivalent as a braided pivotal category to either $\mathcal{C}^\textrm{br}_{10, 5,-}$ or $\mathcal{C}^\textrm{br}_{10, 35,+}$.

The value $\ell$ is a complete invariant of braided categories with $\mathcal{C}(\mathfrak{sl}_2, 10)$ fusion rules. By forgetting the pivotal structure, this allows us to see that there is no braided equivalence between $\mathcal{C}(\mathfrak{sl}_6,6)_{A_{\mathfrak{sp}_{20}}}^0$ and $\mathcal{C}(\mathfrak{sl}_6,6)_{A_{\mathfrak{sp}_{20}}^\textrm{tr}}^0$.

For all remaining \textit{\'etale} algebra objects, we have that $ \mathcal{C}(\mathfrak{sl}_6, k)^0_{A} \not\simeq \mathcal{C}(\mathfrak{sl}_6, k)^0_{B} $ for $A \not\cong B$
by global dimension considerations.

We now determine the number of double cosets in the braided autoequivalence group of $\mathcal{C}(\mathfrak{sl}_6, k)^0_{A}$ for each \textit{\'etale} algebra object $A$. For the pointed algebras $A_m$, and for the algebras $A_{\mathfrak{sl}_{ 15   }}$, $A_{\mathfrak{sl}_{ 21   }}$, and $A_{\mathfrak{so}_{ 35   }}$ we can apply the same logic used in the previous subsections for smaller $N$.

In the case of the algebra $A_{\mathfrak{sp}_{ 20 }}$ we have from \cite[Theorem 1.3]{ModPt1} that $\operatorname{EqBr}\left(  \mathcal{C}(\mathfrak{sp}_{20},1) \right) \cong \mathbb{Z}_2$, and that the non-trivial braided autoequivalence maps $V^{\Lambda_1} \leftrightarrow V^{\Lambda_{9}}$. This autoequivalence does not preserve the forgetful functor from Subsection~\ref{sub:spor}, and hence is not induced by an element of $\operatorname{Aut}\left(A_{\mathfrak{sp}_{ 20 }}\right)$. Hence the image of $\operatorname{Aut}\left(A_{\mathfrak{sp}_{ 20 }}\right)$ in $\operatorname{EqBr}\left(  \mathcal{C}(\mathfrak{sl}_{6},6)^0_{A_{\mathfrak{sp}_{ 20 }}} \right)$ is trivial, and so 
\[|\operatorname{Aut}\left(A_{\mathfrak{sp}_{ 20 }}\right)\backslash\operatorname{EqBr}\left(\mathcal{C}(\mathfrak{sl}_6,6 )^0_{A_{\mathfrak{sp}_{ 20 }}}\right)/\operatorname{Aut}\left(A_{\mathfrak{sp}_{ 20 }}\right)| = 2.\]

For the algebra $A^\textrm{tr}_{\mathfrak{sp}_{ 20 }}$ we have from an earlier discussion that $\mathcal{C}(\mathfrak{sl}_6, k)^0_{^\textrm{tr}_{\mathfrak{sp}_{ 20 }}}$ is a category with $\mathcal{C}(\mathfrak{sl}_2, 10)$ fusion rules. The braided autoequivalence group of such a category is $\mathbb{Z}_2$, where the non-trivial autoequivalence exchanges $X_1\leftrightarrow X_{9}$ and $X_3 \leftrightarrow X_7$. From the branching rules at the beginning of this subsection, we can see this autoequivalence does not preserve the forgetful functor. Hence 
\[|\operatorname{Aut}\left(A^\textrm{tr}_{\mathfrak{sp}_{ 20 }}\right)\backslash\operatorname{EqBr}\left(\mathcal{C}(\mathfrak{sl}_6,6 )^0_{A^\textrm{tr}_{\mathfrak{sp}_{ 20 }}}\right)/\operatorname{Aut}\left(A^\textrm{tr}_{\mathfrak{sp}_{ 20 }}\right)| = 2.\]

For the algebra $A^\textrm{ext}_{\mathfrak{sp}_{ 20 }}$ we have from the earlier discussion that $\mathcal{C}(\mathfrak{sl}_6,6 )^0_{A^\textrm{ext}_{\mathfrak{sp}_{ 20 }}}$ is an Ising category. The braided autoequivalence group of the Ising categories are known to be trivial. Hence 
\[|\operatorname{Aut}\left(A^\textrm{ext}_{\mathfrak{sp}_{ 20 }}\right)\backslash\operatorname{EqBr}\left(\mathcal{C}(\mathfrak{sl}_6,6 )^0_{A^\textrm{ext}_{\mathfrak{sp}_{ 20 }}}\right)/\operatorname{Aut}\left(A^\textrm{ext}_{\mathfrak{sp}_{ 20 }}\right)| = 1.\]

The above table gives 14 triples of the form $(A, \mathcal{F}, A)$ for each of the categories $\mathcal{C}(\mathfrak{sl}_6,4)$, $\mathcal{C}(\mathfrak{sl}_6,6)$, and $\mathcal{C}(\mathfrak{sl}_6,8)$. Furthermore, in the case of $\mathcal{C}(\mathfrak{sl}_6,6)$ we have the two additional triples 
\[   \left( A_{\mathfrak{so}_{ 35 }} , \mathcal{F},  A^\textrm{ext}_{\mathfrak{sp}_{ 20 }} \right)\qquad \text{ and } \qquad  \left(  A^\textrm{ext}_{\mathfrak{sp}_{ 20 }} , \mathcal{F}^{-1},  A_{\mathfrak{so}_{ 35 }}\right)       \]
where $\mathcal{F}$ is the braided equivalence from Equation~\eqref{eq:equiv6}. The result follows from \cite[Corollary 3.8]{triples}.
\end{proof}

\subsection*{The $\mathfrak{sl}_7$ case}\label{sub:7}

In the $\mathfrak{sl}_7$ case we again encounter interesting \textit{\'etale} algebra objects. There are again two of these, and they both occur in $\mathcal{C}(\mathfrak{sl}_7, 7)$.

The first is constructed in a similar fashion to the algebra $A_{\mathfrak{sp}_{20}}^\textrm{ext}$ from the previous subsection. From the conformal embedding
\[      \mathcal{V}(\mathfrak{sl}_7,7) \subset    \mathcal{V}(\mathfrak{so}_{48},1)     \]
we obtain the \textit{\'etale} algebra object $A_{\mathfrak{so}_{48}}\in \mathcal{C}(\mathfrak{sl}_7,7)$. Furthermore, we have a braided equivalence
\[    \mathcal{C}(\mathfrak{sl}_7,7)^0_{A_{\mathfrak{so}_{48}}} \simeq \mathcal{C}(\mathfrak{so}_{48},1).            \]
The braided category $\mathcal{C}(\mathfrak{so}_{48},1)$ is pointed, with underlying group $\mathbb{Z}_2\times \mathbb{Z}_2$, and quadratic form $q = (1, 1,-1,1)$. This implies that the object $\mathbf{1} \oplus S^+\in\mathcal{C}(\mathfrak{so}_{48},1)$ is an \textit{\'etale} algebra object. It follows that $\operatorname{For}(\mathbf{1} \oplus S^+)$ is an \textit{\'etale} algebra object in $\mathcal{C}(\mathfrak{sl}_7,7)$.
\begin{defn}
    We will write $A_{\mathfrak{so}_{48}}^\textrm{ext}:= \operatorname{For}(\mathbf{1} \oplus S^+)\in \mathcal{C}(\mathfrak{sl}_7,7)$.
\end{defn}

Explicitly we have that
\[    A_{\mathfrak{so}_{48}}^\textrm{ext} \cong A_{\mathfrak{so}_{48}}\oplus 4 \cdot V_{\ydiagram{5,4,3,2,1}},   \]
and that
\[       \mathcal{C}(\mathfrak{sl}_7,7)_{A_{\mathfrak{so}_{48}}^\textrm{ext}}^0 \simeq \mathcal{C}(\mathfrak{so}_{48},1)_{\mathbf{1} \oplus S^+}^0 \simeq \operatorname{Vec}.   \]
\begin{rmk}
    More generally, whenever $N \equiv \pm 1 \pmod 8$, the above construction works to give an \textit{\'etale} algebra object $A_{\mathfrak{so}_{N^2-1}}^\textrm{ext}$ in $\mathcal{C}(\mathfrak{sl}_7,7)$.
\end{rmk}

The second new \textit{\'etale} algebra object in $\mathcal{C}(\mathfrak{sl}_7,7)$ is much more exotic. This algebra comes from an exotic VOA extension of $\mathcal{V}(\mathfrak{sl}_7,7)\subset \mathcal{V}'$ \cite{A67}. This exotic VOA was first predicted by Schellekens \cite{Bert}.
\begin{defn}
    We will write $A_{\textrm{Schellekens}}$ for the \textit{\'etale} algebra object in $\mathcal{C}(\mathfrak{sl}_7,7)$ coming from the above VOA extension.
\end{defn}

We have that 
\[ A_{\textrm{Schellekens}}\cong \left[ \mathbf{1} \right]_{\mathbb{Z}_7} \oplus   \left[ V_{\ydiagram{4,4,2,2,2}} \right]_{\mathbb{Z}_7}\oplus   \left[ V_{\ydiagram{6,6,5,3,1}} \right]_{\mathbb{Z}_7}\oplus   \left[ V_{\ydiagram{4,4,4,3,1,1}} \right]_{\mathbb{Z}_7} \oplus   \left[ V_{\ydiagram{4,3,3,1}} \right]_{\mathbb{Z}_7}\oplus 3\cdot V_{\ydiagram{6,5,4,3,2,1}}, \]
and that 
\[\mathcal{C}(\mathfrak{sl}_7,7)^0_{A_{\textrm{Schellekens}}}\simeq \operatorname{Vec}.\]

With this information compiled, we are ready to finish our proof of Theorem~\ref{thm:spec}.
\begin{proof}[Proof of Theorem~\ref{thm:spec} in the case of $\mathfrak{g} = \mathfrak{sl}_7$]
We have the following data. Again, the $\dag$ represents when two algebras $A$ and $B$ have braided equivalent categories of local modules.

\begin{center}\begin{tabular}{c|c| c c c c}
    $\mathcal{C}$ & \textit{\'Etale} Algebras $A$  & $\operatorname{EqBr}(\mathcal{C}_A^0)$ & $|\operatorname{Aut}(A)\backslash\operatorname{EqBr}(\mathcal{C}_A^0)/\operatorname{Aut}(A)|$ \\\hline
      $\mathcal{C}(\mathfrak{sl}_7,5)$ & $\mathbf{1}$ &  $\mathbb{Z}_2^2$ & 4\\
                                       & $A_{\mathfrak{sl}_{21}}$ &  $\mathbb{Z}_2^2$ & 4\\ \hline
     $\mathcal{C}(\mathfrak{sl}_7,7)$ & $\mathbf{1}$ &  $\mathbb{Z}_2$ & 2\\
                                        &$A_7$ &  $D_7\times \mathbb{Z}_2$ & 2\\
                                         & $A_{\mathfrak{so}_{48}}$ &  $\mathbb{Z}_2$ &2 &  \\        
                                        & $A^\textrm{ext}_{\mathfrak{so}_{48}}$ &  $\{e\}$ &1 &\dag  \\ 
                                        & $A_{\textrm{Schellekens}}$ &  $\{e\}$ &1 &\dag \\ \hline
    $\mathcal{C}(\mathfrak{sl}_7,9)$ & $\mathbf{1}$ &  $\mathbb{Z}_2^2$  & 4\\
                                       & $A_{\mathfrak{sl}_{28}}$ &  $\mathbb{Z}_2^2$& 4\\ \hline
                                        
\end{tabular}\end{center}
\end{proof}
It is immediate that there is a braided equivalence 
\begin{equation}\label{eq:equiv7}\mathcal{C}(\mathfrak{sl}_7,7)^0_{A^\textrm{ext}_{\mathfrak{so}_{48}}}  \simeq  \operatorname{Vec} \simeq       \mathcal{C}(\mathfrak{sl}_7,7)^0_{A_{\textrm{Schellekens}}}.   \end{equation}
The remaining categories $\mathcal{C}(\mathfrak{sl}_7,k)_A^0$ are seen to be pairwise distinct by global dimension considerations.

We now move on to determining the number of double cosets in $\operatorname{EqBr}(\mathcal{C}(\mathfrak{sl}_7,k)_A^0)$ with respect to the image of $\operatorname{Aut}(A)$. The arguments for the algebras $A_m$, $A_{\mathfrak{sl}_{21}}$, $A_{\mathfrak{sl}_{28}}$, and $A_{\mathfrak{so}_{48}}$ are near identical to the previous subsections. The remaining algebras $A^\textrm{ext}_{\mathfrak{so}_{48}}$ and $A_{\textrm{Schellekens}}$ are trivial to deal with, as $\mathcal{C}(\mathfrak{sl}_7,k)_A^0\simeq \operatorname{Vec}$ in these cases, which has trivial autoequivalence group.

We thus have 8 triples of the form $(A, \mathcal{F}, A)$ for the each of the categories $\mathcal{C}(\mathfrak{sl}_7,5)$, $\mathcal{C}(\mathfrak{sl}_7,7)$ and $\mathcal{C}(\mathfrak{sl}_7,9)$. Furthermore, in the case of $\mathcal{C}(\mathfrak{sl}_7,7)$ we have the two additional triples 
\[   \left( A^\textrm{ext}_{\mathfrak{so}_{ 48 }} , \mathcal{F},  A_{\textrm{Schellekens}} \right)\qquad \text{ and } \qquad  \left(  A_{\textrm{Schellekens}} , \mathcal{F}^{-1},  A^\textrm{ext}_{\mathfrak{so}_{ 48 }}\right)       \]
where $\mathcal{F}$ is the braided equivalence from Equation~\eqref{eq:equiv7}.

The result now follows from \cite[Corollary 3.8]{triples}.

\appendix
\section{Erratum to Lemma 3.9 in part I}\label{app:fix}

This appendix is to fix a minor (but non-trivial) error in \cite{ModPt1}. We will freely use the notation of the cited paper in this appendix. The proof of Lemma 3.9 in \cite{ModPt1} is incomplete as written, as it fails to consider several special cases. The statement of this lemma needs a slight alteration to deal with one of these special cases $\mathcal{C}(\mathfrak{sl}_4,4)^{ad}_{\operatorname{Rep}(\mathbb{Z}_2)}$. The correct statement of the lemma is as follows.

\begin{lem}[Corrected Statement of Lemma 3.9]\label{lem:new}
    Apart from the case $(r,k,m) = (3,4,2)$, we have
    \[  \operatorname{EqBr}\left( \mathcal{C}(\mathfrak{sl}_{r+1},k)^{ad}_{\operatorname{Rep}(\mathbb{Z}_{m'})}  \right)  \cong \begin{cases}
        \mathbb{Z}_{m'} \quad \text{ if $r=1$ or $k=2$}\\
        D_{m'} \quad \text{ otherwise.}\\
    \end{cases} \]
    In the remaining case, we have that $\operatorname{EqBr}\left( \mathcal{C}(\mathfrak{sl}_{4},4)^{ad}_{\operatorname{Rep}(\mathbb{Z}_{2})}  \right)$ is isomorphic to either $D_2$ or $\mathbb{Z}_2$.
\end{lem}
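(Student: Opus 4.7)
My plan is to follow the same overall strategy as the original proof of Lemma 3.9 in \cite{ModPt1}: first compute the fusion ring automorphism group of $\mathcal{C}(\mathfrak{sl}_{r+1},k)^{ad}_{\operatorname{Rep}(\mathbb{Z}_{m'})}$ using dimension and twist data; next show which fusion ring automorphisms lift to braided autoequivalences; and finally exhibit the braided autoequivalences explicitly via $\operatorname{Ind}$ from $\operatorname{Aut}(\operatorname{Rep}(\mathbb{Z}_{m'}))$ and from charge conjugation $V_\lambda \mapsto V_{\lambda^*}$. For generic $(r,k,m)$, the $\mathbb{Z}_{m'}$-orbits of highest-weight simples carry enough distinct dimensions and twists to force any braided autoequivalence to act as a permutation of a rigid $D_{m'}$-pattern: the $\mathbb{Z}_{m'}$ factor is the image of $\operatorname{Ind}$, and the outer $\mathbb{Z}_2$ (completing to $D_{m'}$) is charge conjugation. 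When $r=1$ there is no non-trivial charge conjugation, and when $k=2$ the adjoint subcategory is too small to support the outer $\mathbb{Z}_2$ independently of $\operatorname{Ind}$, giving the stated collapse to $\mathbb{Z}_{m'}$.

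The correction amounts to auditing the sporadic coincidences where the generic rigidity step fails. By a case analysis of twists, quantum dimensions, $t$-gradings, and $S$-matrix entries restricted to simples fixed by the relevant stabiliser, the only genuinely problematic triple turns out to be $(r,k,m)=(3,4,2)$, i.e. $\mathcal{C}(\mathfrak{sl}_4,4)^{ad}_{\operatorname{Rep}(\mathbb{Z}_2)}$. In every other triple the rigidity argument runs without modification once one tracks the small number of orbit collisions overlooked in the original proof.

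For the exceptional case $(3,4,2)$ the plan is to establish matching lower and upper bounds rather than an exact value. The lower bound $\mathbb{Z}_2 \hookrightarrow \operatorname{EqBr}$ is produced directly by $\operatorname{Ind}(\operatorname{Aut}(\operatorname{Rep}(\mathbb{Z}_2)))$ via Lemma~\ref{lem:induce}, giving the $\mathbb{Z}_{m'}=\mathbb{Z}_2$ subgroup described in the generic formula. The upper bound $\operatorname{EqBr} \subseteq D_2$ is obtained by bounding the braided fusion ring automorphism group of $\mathcal{C}(\mathfrak{sl}_4,4)^{ad}_{\operatorname{Rep}(\mathbb{Z}_2)}$: since the category has relatively few simples, a finite check shows that permutations preserving dimensions, twists, and $S$-matrix entries form a group of order at most four, isomorphic to $D_2 \cong \mathbb{Z}_2\times\mathbb{Z}_2$.

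The main obstacle, which I expect will not be resolvable by these elementary methods, is determining whether the outer $\mathbb{Z}_2$ generated by charge conjugation descends to a genuinely new element of $\operatorname{EqBr}\left(\mathcal{C}(\mathfrak{sl}_4,4)^{ad}_{\operatorname{Rep}(\mathbb{Z}_2)}\right)$ or merges with the image of $\operatorname{Ind}$ after de-equivariantisation at this small case. Rather than settle this point (which would require detailed computation with the modular data at $k=4$ and does not affect the downstream results, since $\mathcal{C}(\mathfrak{sl}_4,4)$ is handled by independent \textit{\'etale}-algebra counting in Section~\ref{sec:spec} and comparison to \cite{dan}), the cleanest path is to record the value as lying in $\{\mathbb{Z}_2, D_2\}$, exactly as in the revised lemma statement.
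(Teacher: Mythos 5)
Your high-level plan recognizes the structure of the correction correctly: reuse the machinery of the original Lemma 3.9, locate the sporadic $(r,k,m)$ where the distinguishing argument fails, and record the one irresolvable case as an ambiguity. And your closing observation — that the ambiguity at $(3,4,2)$ can safely be left as $\{\mathbb{Z}_2, D_2\}$ because it is absorbed downstream — matches the paper's framing exactly.

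However, the heart of the correction is precisely the work you leave as a black box. You write ``by a case analysis of twists, quantum dimensions, $t$-gradings, and $S$-matrix entries\dots the only genuinely problematic triple turns out to be $(r,k,m)=(3,4,2)$,'' but do not carry out that analysis or identify what the sporadic cases actually are. The paper's error in \cite{ModPt1} was very specific: the chosen test object $X=(k-3)\Lambda_0+2\Lambda_1+\Lambda_{r-1}$ was claimed to satisfy $X^*\not\cong k\Lambda_i\otimes X$ for all $i$, and Proposition~\ref{prop:fix} shows this fails for \emph{five} pairs, $(r,k)\in\{(2,3),(2,6),(3,4),(4,5),(5,3)\}$, not just one. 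Four of these are then rescued by different arguments: $(2,3)$ is already excluded by Lemma 2.15 and Remark 2.16 of the prequel; the sub-cases with $m'=1$ follow from \cite[Theorem 1.1]{autos}; and for $(2,6),(4,5),(5,3)$ with $m'\in\{3,5,3\}$ one replaces the failing test object with the pair $(\Lambda_1+\Lambda_r,\chi_{\pm1})$, using that $\operatorname{Stab}(\Lambda_1+\Lambda_r)\cong\mathbb{Z}_{m'}$ with $m'>2$. Only after all of that does $(3,4,2)$ survive as genuinely ambiguous. Without exhibiting the five failing pairs and the separate resolutions of the first four, your proposal asserts the conclusion of Proposition~\ref{prop:fix} rather than proving it.

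A smaller point: your framing in the $(3,4,2)$ case centers on establishing an \emph{upper} bound on $\operatorname{EqBr}$ by enumerating fusion-ring automorphisms. In the paper the upper bound $D_{m'}$ is not in doubt (it comes from the prequel); the unresolved issue is a \emph{lower} bound, namely whether the charge-conjugation autoequivalence $(X,\chi_n)\mapsto(X^*,\chi_{-n})$ is distinct from the $\operatorname{Ind}$-images $(X,\chi_n)\mapsto(X,\chi_{n+i})$. For $(3,4,2)$ no test object in $\mathcal{C}(\mathfrak{sl}_4,4)^{ad}$ distinguishes them at the combinatorial level, which is exactly why the result is recorded as $\mathbb{Z}_2$-or-$D_2$. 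You should reorient the argument around this lower-bound question rather than the fusion-ring enumeration.
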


This does not affect the statement of Theorem 3.2, as we can deal with the ambiguity of the $ \mathcal{C}(\mathfrak{sl}_{4},4)^{ad}_{\operatorname{Rep}(\mathbb{Z}_{2})}$ case by slightly altering the proof of Theorem 3.2. We will give the new proof at the end of the Erratum. All other statements and proofs are entirely unaffected by the changes to Lemma 3.9.

\subsection*{The correct proof of Lemma 3.9}

We begin by discussing the oversight in our proof of Lemma 3.9. The step in question is in showing that the braided autoequivalence of $\mathcal{C}(\mathfrak{sl}_{r+1}, k)^{ad}_{\operatorname{Rep}(\mathbb{Z}_{m'})}$ defined on objects by 
\[   (X, \chi_n) \mapsto (X^*, \chi_{-n})      \]
is non-isomorphic to any of the autoequivalences which are defined on objects by
\[      (X, \chi_n) \mapsto (X, \chi_{n+i}).      \]          
As explained in the proof of Lemma 3.9 it is a sufficient condition to find an object $X\in \mathcal{C}(\mathfrak{sl}_{r+1}, k)^{ad}$ such that $X^* \not \cong k\Lambda_{i}\otimes X$ for any $i \in \mathbb{Z}_{r+1}$. It is claimed in the paper that $X \cong (k-3)\Lambda_0 + \Lambda_1 + 2\Lambda_{r-1}$ works for all $N\geq 4$. We first point out that this is a typo, as this $X$ is not an object of $\mathcal{C}(\mathfrak{sl}_{r+1}, k)^{ad}$. The intended object was 
\[          X \cong (k-3)\Lambda_0+2\Lambda_1 + \Lambda_{r-1}.\]
More seriously, this object fails the required condition with at several specific values of $(r,k)$. These exceptions were missed in the proof of Lemma 3.9.

\begin{prop}\label{prop:fix}
 Let $r\geq 2$, $ k\geq 3$, and choose $X \cong (k-3)\Lambda_0+2\Lambda_1 + \Lambda_{r-1}$. Then apart from the cases $(r,k) \in \{(2,3),  (2,6), (3,4), (4,5), (5,3)\}$, we have
 \[   X^* \not \cong k\Lambda_{i}\otimes X   \]
 for any $i \in \mathbb{Z}_{r+1}$.
\end{prop}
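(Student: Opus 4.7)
The plan is to translate the condition $X^* \cong V_{k\Lambda_i} \otimes X$ into a simple combinatorial statement about cyclic rotations of sequences on the affine Dynkin diagram, and then dispatch the exceptions by a short case analysis on $k$.

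Write $N = r+1$ and encode a level-$k$ weight $\mu = \sum_{i=0}^{r} \mu_i \Lambda_i$ by its affine coefficient vector $(\mu_0, \mu_1, \ldots, \mu_r)$. Since the simple current $V_{k\Lambda_i} = V_{[k^i]}$ acts as $\tau^i$, and $\tau$ is the rotation $(\mu_i) \mapsto (\mu_{i-1 \bmod N})$, the equation $\lambda^* \cong \tau^j(\lambda)$ becomes the assertion that the two sequences
\[
\lambda : (k-3,\,2,\,0,\,\ldots,\,0,\,1,\,0), \qquad \lambda^* : (k-3,\,0,\,1,\,0,\,\ldots,\,0,\,2)
\]
(where the $1$ in $\lambda$ sits in position $r-1$ and the $2$ in $\lambda^*$ sits in position $r$) are cyclic rotations of each other. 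This reduces the problem to finite bookkeeping.

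I would then split on the value of $k$. For $k \geq 6$, the entry $k-3 \geq 3$ is distinct from both $1$ and $2$, so it occurs exactly once in each sequence, namely at position $0$; the only candidate shift is therefore $j = 0$, but then $\lambda \neq \lambda^*$ since the positions of $1$ and $2$ disagree for all $r \geq 2$ (using $r \neq 1$ to guarantee $\Lambda_1 \neq \Lambda_r$ and $\Lambda_{r-1} \neq \Lambda_2$). For $k = 5$ (so the values multiset is $\{2,2,1\}$) the unique $1$ pins $j \equiv 3-r \pmod{N}$, after which checking the two occurrences of $2$ forces $r = 4$, recovering $(r,k) = (4,5)$. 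For $k = 4$ (values $\{1,2,1\}$) the unique $2$ pins $j \equiv r-1 \pmod N$, and comparing the remaining positions forces $r = 3$, giving $(3,4)$. For $k = 3$ we have $\lambda_0 = 0$, and the two nonzero positions $\{1, r-1\}$ must cyclically match $\{2, r\}$; this yields $N \mid 2r - 4$, i.e. $N \mid 6$, hence $r \in \{2, 5\}$ in the range $r \geq 2$, which gives $(2,3)$ and $(5,3)$.

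Finally, $r = 2$ must be treated separately because then $\Lambda_{r-1} = \Lambda_1$, so $\lambda = (k-3)\Lambda_0 + 3\Lambda_1$ and $\lambda^* = (k-3)\Lambda_0 + 3\Lambda_2$; running through the three shifts $j \in \{0,1,2\}$ directly produces matches exactly at $k = 3$ and $k = 6$. Collecting the five matches $(2,3),(2,6),(3,4),(4,5),(5,3)$ proves the proposition.

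The conceptual work is minimal — the main obstacle is bookkeeping: for each small value of $k$ one has to verify that the forced $j$ really does (or does not) align all of $\lambda$ with $\lambda^*$, with slight care taken in the degenerate overlaps (e.g.\ $r = 2$ collapsing positions $1$ and $r-1$, or $k = 4$ making $\lambda_0$ coincide with $\lambda_{r-1}$ in value). No deep structural input is required beyond the description of $\tau$ and duality on Dynkin labels in Subsection~\ref{sec:Cgk}.
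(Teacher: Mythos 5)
Your proposal is correct and follows essentially the same route as the paper's proof: translate $X^* \cong V_{k\Lambda_i}\otimes X$ into the statement that the affine Dynkin-label vector of $X^*$ is a cyclic rotation of that of $X$, then run a case analysis on $k\in\{3,4,5,\ge 6\}$ while treating $r=2$ (where $\Lambda_1=\Lambda_{r-1}$ collapses two entries) separately. Your ``the unique value pins the shift $j$'' phrasing is a slightly tidier way to organize the bookkeeping than the paper's direct solving of congruence systems in $i$, but the underlying computation is identical.
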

\begin{proof}
    Suppose that
    \[   X^* \cong k\Lambda_{i}\otimes X   \]
 for some $i \in \mathbb{Z}_{r+1}$. To deduce that we are in one of the five special cases we will split into cases.
    \begin{trivlist}\leftskip=2em
        \item \textbf{Case $r= 2$}:

        Here we have $k\Lambda_{i}\otimes X \cong (k-3)\Lambda_i + 3\Lambda_{1+i}$, and $X^* \cong (k-3)\Lambda_0 + 3\Lambda_2$. If $k\neq 6$, then we must have the equations
        \[   i \equiv 0 \pmod 3 \quad \text{ and }\quad i+1 \equiv 2 \pmod 3.    \]
        This system clearly has no solution. If $k=6$ then we have the solution
        \[ k\Lambda_{2}\otimes X \cong 3\Lambda_0 + 3\Lambda_{2}    \cong X^*.  \]
        This gives the exception $(r,k) = (2,6)$ in the statement of the proposition.

        \vspace{1em}
        
        For the remainder of the cases we will have $r>2$, and so 
        \[  k\Lambda_{i}\otimes X \cong (k-3)\Lambda_i + 2\Lambda_{1+i} + \Lambda_{r-1+i}\quad \text{ and } \quad  X^* \cong (k-3)\Lambda_0  + \Lambda_2 + 2\Lambda_r.      \]
        
        \vspace{1em}

        \item \textbf{Case $k= 3$ and $r>2$}:

        Here we obtain the equations
        \[  2\equiv r-1 +i \pmod {r+1} \quad \text{ and } r \equiv 1+i \pmod {r+1}.   \]
        Solving this system gives that either $r = 5$ or $r = 2$. These give the two exceptions $(2,3)$ and $(5,3)$ in the statement of the proposition.

        \vspace{1em}

        \item \textbf{Case $k= 4$ and $r>2$}:

        Here we have that
        \[  r\equiv 1+i \pmod {r+1} \]
        and that either
        \[    0\equiv i \pmod {r+1} \quad \text{ and } 2 \equiv r-1+i \pmod {r+1}    \]
        or
        \[    2\equiv i \pmod {r+1} \quad \text{ and } 0 \equiv r-1+i \pmod {r+1}.    \]
        The first only has the solution $r=1$ which we can ignore, and the second has the solutions $r=1$ and $r=3$. Again we can ignore the $r=1$ solution, and the $r=3$ solution gives the $(r,k) = (3,4)$ exception in the statement of the proposition.

        \vspace{1em}

        \item \textbf{Case $k= 5$ and $r>2$}:

        Here we have that
        \[  2\equiv r-1+i \pmod {r+1} \]
        and that either
        \[    0\equiv i \pmod {r+1} \quad \text{ and } r \equiv 1+i \pmod {r+1}    \]
        or
        \[    i\equiv r \pmod {r+1} \quad \text{ and } 0 \equiv 1+i \pmod {r+1}.    \]
        The first only has the solution $r=1$ which is ignored, and the second only has the solution $r=4$. This gives the exception $(r,k) = (4,5)$ in the statement of the proposition.

        \vspace{1em}

        \item \textbf{Case $k\geq 6$ and $r>2$}:

        Here we get the equations
        \[       i \equiv 0 \pmod{r+1} \qquad  2 \equiv r-1+i \pmod{r+1}, \quad \text{ and } \quad r \equiv 1+i \pmod {r+1}.  \]
        This only has the solution $r=1$ which is ignored.
\end{trivlist}
\end{proof}
This proposition shows that the statement of Lemma 3.9 holds for all but the five special cases $(r,k) \in \{(2,3),  (2,6), (3,4), (4,5), (5,3)\}$. The case of $\mathcal{C}( \mathfrak{sl}_3, 3  )^{ad}_{\operatorname{Rep}(\mathbb{Z}_3)}$ is not included in the statement of Lemma 3.9 due to Lemma 2.15 and Remark 2.16. For the remaining four cases we have to show that the autoequivalences of $\mathcal{C}( \mathfrak{sl}_{r+1}, k  )^{ad}_{\operatorname{Rep}(\mathbb{Z}_{m'})}$
\begin{equation}\label{eq:autoc}   (X, \chi_n) \mapsto (X^*, \chi_{-n})     \end{equation}
and
\begin{equation}\label{eq:autoi}      (X, \chi_n) \mapsto (X, \chi_{n+i}).     \end{equation}
are distinct. In the special case of $m'=1$, this follows from \cite[Theorem 1.1]{autos}.

For the cases $(r,k) \in \{(2,6), (4,5), (5,3)\}$ we are left with the cases where $m'$ is $3$, $5$, and $3$ respectively. Here we consider the objects $(\Lambda_1 + \Lambda_r, \chi_1)$, and $(\Lambda_1 + \Lambda_r, \chi_{-1})$, which are distinct as $\operatorname{Stab}(\Lambda_1 + \Lambda_r) \cong \mathbb{Z}_{m'}$, and $m'>2$ in all cases. The autoequivalence in Equation~\eqref{eq:autoc} exchanges these two objects. As $m' >2$ in all cases, it is immediate that there is no choice of $i$ such that the autoequivalence in Equation~\eqref{eq:autoi} exchanges $(\Lambda_1 + \Lambda_r, \chi_1)$ and $(\Lambda_1 + \Lambda_r, \chi_{-1})$. Hence we have the desired outcome for these three cases.

In the final case of $\mathcal{C}( \mathfrak{sl}_{4}, 4  )^{ad}_{\operatorname{Rep}(\mathbb{Z}_{2})}$ we are unable to use combinatorial arguments to distinguish the map 
\[   (X, \chi_n) \mapsto (X^*, \chi_{-n})      \]
from the identity. Hence we have the ambiguity that either $\operatorname{EqBr}\left( \mathcal{C}(\mathfrak{sl}_{4},4)^{ad}_{\operatorname{Rep}(\mathbb{Z}_{2})}  \right)\cong D_2$ if the above autoequivalence is not naturally isomorphic to the identity, or $\operatorname{EqBr}\left( \mathcal{C}(\mathfrak{sl}_{4},4)^{ad}_{\operatorname{Rep}(\mathbb{Z}_{2})}  \right)\cong \mathbb{Z}_2$ if it is. All together we have the proof of Lemma~\ref{lem:new}.

\subsection*{The adjustment to the proof of Theorem 3.2}

To finish up, we have to show that the ambiguity in Lemma~\ref{lem:new} can be resolved in the proof of Theorem 3.2, without the statement of Theorem 3.2 being affected. That is, we have to prove that $\operatorname{EqBr}\left( \mathcal{C}(\mathfrak{sl}_{4},4)^{0}_{\operatorname{Rep}(\mathbb{Z}_{2})}  \right)\cong \mathbb{Z}_2\times D_2$. With the new ambiguity present in Lemma~\ref{lem:new}, we have that either $\operatorname{EqBr}\left( \mathcal{C}(\mathfrak{sl}_{4},4)^{0}_{\operatorname{Rep}(\mathbb{Z}_{2})}  \right)\cong \mathbb{Z}_2\times D_2$ or $\operatorname{EqBr}\left( \mathcal{C}(\mathfrak{sl}_{4},4)^{0}_{\operatorname{Rep}(\mathbb{Z}_{2})}  \right)\cong \mathbb{Z}_2\times \mathbb{Z}_2$. To rule out the latter possibility, we will simply find 8 non pairwise isomorphic braided autoequivalences of $\mathcal{C}(\mathfrak{sl}_{4},4)^{0}_{\operatorname{Rep}(\mathbb{Z}_{2})}$. 

The first is given by the action of $\operatorname{Rep}(\mathbb{Z}_2)$, which is defined on objects by 
\[
    (X, \chi_n)\mapsto  (X, \chi_{n+1}) . 
\]
We label this autoequivalence $\mathcal{F}_{\operatorname{Rep}(\mathbb{Z}_2)}$. We have that $\mathcal{F}_{\operatorname{Rep}(\mathbb{Z}_2)}\circ \mathcal{F}_{\operatorname{Rep}(\mathbb{Z}_2)} \cong \operatorname{Id}$.

We have another braided autoequivalence coming from simple currents as in \cite[Corollary 3.11]{ModPt1}. This autoequivalence is distinguished by the fact that it restricts to the identity on the subcategory $\mathcal{C}(\mathfrak{sl}_{4},4)^{ad}_{\operatorname{Rep}(\mathbb{Z}_{2})}$. We label this autoequivalence $\mathcal{F}_{  (4\Lambda_1, \chi_0)}$. This braided autoequivalence also has order 2.

Finally we have a braided autoequivalence which is descended from the charge conjugation autoequivalence of $\mathcal{C}(\mathfrak{sl}_{4},4)$. This autoequivalence is defined on objects by 
\[     (X,\chi_n) \mapsto   (X^*,\chi_{-n}).     \]
As for the previous examples, this braided autoequivalence has order 2. We label this autoequivalence $\mathcal{F}_{c}$.

While we previously couldn't distinguish the autoequivalence $\mathcal{F}_c$ from the identity when restricted to the subcategory $\mathcal{C}(\mathfrak{sl}_{4},4)^{ad}_{\operatorname{Rep}(\mathbb{Z}_{2})}$, it is much easier to distinguish them on the full category $\mathcal{C}(\mathfrak{sl}_{4},4)^{0}_{\operatorname{Rep}(\mathbb{Z}_{2})}$. Indeed, we consider the object $(2\Lambda_1, \chi_0)\in \mathcal{C}(\mathfrak{sl}_{4},4)^{0}_{\operatorname{Rep}(\mathbb{Z}_{2})}$. We have that the charge conjugation autoequivalence maps
\[     (2\Lambda_1, \chi_0)\mapsto (2\Lambda_3, \chi_0).   \]
As $2\Lambda_3 \not \cong (4\Lambda_2) \otimes (2\Lambda_1)$, we have that $  (2\Lambda_1, \chi_0)\not \cong (2\Lambda_3, \chi_0)$, and so the charge congugation autoequivalence is not isomorphic to the identity.

Furthermore, we have that $\mathcal{F}_{\operatorname{Rep}(\mathbb{Z}_2)}\circ \mathcal{F}_{c} \not \cong \mathcal{F}_{  (4\Lambda_1, \chi_0)}$. This can be checked at the level of the fusion ring as follows:
\[   ( \mathcal{F}_{\operatorname{Rep}(\mathbb{Z}_2)}\circ \mathcal{F}_{c})(\Lambda_1+\Lambda_3, \chi_0 ) \cong  (\Lambda_1+\Lambda_3, \chi_1)\not\cong (\Lambda_1+\Lambda_3, \chi_0) \cong \mathcal{F}_{  (4\Lambda_1, \chi_0)}(\Lambda_1+\Lambda_3, \chi_0).       \]
Here we have used that $\mathcal{F}_{  (4\Lambda_1, \chi_0)}$ restricts to the identity on the subcategory $\mathcal{C}(\mathfrak{sl}_{4},4)^{ad}_{\operatorname{Rep}(\mathbb{Z}_{2})}$.

From the above computations, we have that $\operatorname{EqBr}\left( \mathcal{C}(\mathfrak{sl}_{4},4)^{0}_{\operatorname{Rep}(\mathbb{Z}_{2})}  \right)$ contains 3 distinct order 2 elements, no two of which compose together to give the remaining element. This implies that the order of $\operatorname{EqBr}\left( \mathcal{C}(\mathfrak{sl}_{4},4)^{0}_{\operatorname{Rep}(\mathbb{Z}_{2})}  \right)$ is at least 8, and so 
\[\operatorname{EqBr}\left( \mathcal{C}(\mathfrak{sl}_{4},4)^{0}_{\operatorname{Rep}(\mathbb{Z}_{2})}  \right)\cong \mathbb{Z}_2\times D_2\]
as claimed.

\bibliography{mod}

\newcommand{\etalchar}[1]{$^{#1}$}
\begin{thebibliography}{DMNO13}

\bibitem[And92]{QG2}
Henning~Haahr Andersen.
\newblock Tensor products of quantized tilting modules.
\newblock {\em Comm. Math. Phys.}, 149(1):149--159, 1992.

\bibitem[BB87]{Embeddings2}
F.~Alexander Bais and Peter~G. Bouwknegt.
\newblock A classification of subgroup truncations of the bosonic string.
\newblock {\em Nuclear Phys. B}, 279(3-4):561--570, 1987.

\bibitem[BE04]{beh}
Roger~E. Behrend and David~E. Evans.
\newblock Integrable lattice models for conjugate {$A^{(1)}_n$}.
\newblock {\em J. Phys. A}, 37(8):2937--2947, 2004.

\bibitem[CEM23]{dan}
Daniel Copeland and Cain Edie-Michell.
\newblock Cell systems for
  {$\overline{\operatorname{Rep}(U_q(\mathfrak{sl}_N))}$} module categories,
  2023.

\bibitem[CIZ87]{ADE}
A.~Cappelli, C.~Itzykson, and J.-B. Zuber.
\newblock Modular invariant partition functions in two dimensions.
\newblock {\em Nuclear Phys. B}, 280(3):445--465, 1987.

\bibitem[CW12]{comes}
Jonathan Comes and Benjamin Wilson.
\newblock Deligne's category {$\underline{\rm{Rep}}(GL_\delta)$} and
  representations of general linear supergroups.
\newblock {\em Represent. Theory}, 16:568--609, 2012.

\bibitem[Dav10]{davy}
Alexei Davydov.
\newblock Centre of an algebra.
\newblock {\em Adv. Math.}, 225(1):319--348, 2010.

\bibitem[Dec22]{thibo}
Thibault Decoppet.
\newblock Drinfeld centers and morita equivalence classes of fusion
  2-categories, 2022.

\bibitem[DGNO10]{braidedI}
Vladimir Drinfeld, Shlomo Gelaki, Dmitri Nikshych, and Victor Ostrik.
\newblock On braided fusion categories. {I}.
\newblock {\em Selecta Math. (N.S.)}, 16(1):1--119, 2010.

\bibitem[DMNO13]{LagrangeUkraine}
Alexei Davydov, Michael M\"{u}ger, Dmitri Nikshych, and Victor Ostrik.
\newblock The {W}itt group of non-degenerate braided fusion categories.
\newblock {\em J. Reine Angew. Math.}, 677:135--177, 2013.

\bibitem[DNO13]{triples}
Alexei Davydov, Dmitri Nikshych, and Victor Ostrik.
\newblock On the structure of the {W}itt group of braided fusion categories.
\newblock {\em Selecta Math. (N.S.)}, 19(1):237--269, 2013.

\bibitem[DR18]{fus2}
Christopher Douglas and David Reutter.
\newblock Fusion 2-categories and a state-sum invariant for 4-manifolds, 2018.

\bibitem[EGNO15]{book}
Pavel Etingof, Shlomo Gelaki, Dmitri Nikshych, and Victor Ostrik.
\newblock {\em Tensor categories}, volume 205 of {\em Mathematical Surveys and
  Monographs}.
\newblock American Mathematical Society, Providence, RI, 2015.

\bibitem[EM18]{ADEaut}
Cain Edie-Michell.
\newblock The {B}rauer-{P}icard groups of fusion categories coming from the
  {$ADE$} subfactors.
\newblock {\em Internat. J. Math.}, 29(5):1850036, 43, 2018.

\bibitem[EM20]{Simple}
Cain Edie-Michell.
\newblock Simple current auto-equivalences of modular tensor categories.
\newblock {\em Proc. Amer. Math. Soc.}, 148(4):1415--1428, 2020.

\bibitem[EM22]{autos}
Cain Edie-Michell.
\newblock Auto-equivalences of the modular tensor categories of type {$A$},
  {$B$}, {$C$} and {$G$}.
\newblock {\em Adv. Math.}, 402:Paper No. 108364, 70, 2022.
\newblock With an appendix by Terry Gannon.

\bibitem[EM23]{ModPt1}
Cain Edie-Michell.
\newblock Type {II} quantum subgroups of {$\mathfrak{sl}_N$}. {I}: {S}ymmetries
  of local modules.
\newblock {\em Comm. Amer. Math. Soc.}, 3:112--165, 2023.
\newblock With appendix by Terry Gannon.

\bibitem[EMM17]{surv}
Cain Edie-Michell and Scott Morrison.
\newblock A field guide to categories with {$A_n$} fusion rules, 2017.

\bibitem[EMM24]{lance}
Cain Edie-Michell and Lance Marinelli.
\newblock The construction of a {$E_7$}-like quantum subgroup of {$SU(3)$},
  2024.

\bibitem[EMSW24]{pres}
Cain Edie-Michell, Noah Synder, and Hans Wenzl.
\newblock Interpolation categories for conformal embeddings, 2024.
\newblock In Preparation.

\bibitem[ENO10]{homo}
Pavel Etingof, Dmitri Nikshych, and Victor Ostrik.
\newblock Fusion categories and homotopy theory.
\newblock {\em Quantum Topol.}, 1(3):209--273, 2010.
\newblock With an appendix by Ehud Meir.

\bibitem[EO04]{ETMod}
Pavel Etingof and Viktor Ostrik.
\newblock Module categories over representations of {${\rm SL}_q(2)$} and
  graphs.
\newblock {\em Math. Res. Lett.}, 11(1):103--114, 2004.

\bibitem[EP09]{SU3}
David~E. Evans and Mathew Pugh.
\newblock Ocneanu cells and {B}oltzmann weights for the {$\rm SU(3)$} {${ADE}$}
  graphs.
\newblock {\em M\"{u}nster J. Math.}, 2:95--142, 2009.

\bibitem[Fin96]{MR1384612}
Micheal Finkelberg.
\newblock An equivalence of fusion categories.
\newblock {\em Geom. Funct. Anal.}, 6(2):249--267, 1996.

\bibitem[FK93]{fro}
J\"{u}rg Fr\"{o}hlich and Thomas Kerler.
\newblock {\em Quantum groups, quantum categories and quantum field theory},
  volume 1542 of {\em Lecture Notes in Mathematics}.
\newblock Springer-Verlag, Berlin, 1993.

\bibitem[Fon91]{LeBeak}
Anamar\'{\i}a Font.
\newblock Automorphism fixed points and exceptional modular invariants.
\newblock {\em Modern Phys. Lett. A}, 6(35):3265--3272, 1991.

\bibitem[FRS02a]{CFTmod}
J\"urgen Fuchs, Ingo Runkel, and Christoph Schweigert.
\newblock Conformal correlation functions, {F}robenius algebras and
  triangulations.
\newblock {\em Nuclear Phys. B}, 624(3):452--468, 2002.

\bibitem[FRS02b]{fuch}
J\"{u}rgen Fuchs, Ingo Runkel, and Christoph Schweigert.
\newblock T{FT} construction of {RCFT} correlators. {I}. {P}artition functions.
\newblock {\em Nuclear Phys. B}, 646(3):353--497, 2002.

\bibitem[FRS04]{simps}
J\"{u}rgen Fuchs, Ingo Runkel, and Christoph Schweigert.
\newblock T{FT} construction of {RCFT} correlators. {III}. {S}imple currents.
\newblock {\em Nuclear Phys. B}, 694(3):277--353, 2004.

\bibitem[Fuc91]{currents}
J\"urgen Fuchs.
\newblock Simple {WZW} currents.
\newblock {\em Comm. Math. Phys.}, 136(2):345--356, 1991.

\bibitem[FZ92]{WZW}
Igor~B. Frenkel and Yongchang Zhu.
\newblock Vertex operator algebras associated to representations of affine and
  {V}irasoro algebras.
\newblock {\em Duke Math. J.}, 66(1):123--168, 1992.

\bibitem[Gan]{newGan}
Terry Gannon.
\newblock Exotic quantum subgroups and extensions of affine {L}ie algebra
  {VOA}s, {II}.
\newblock In Preparation.

\bibitem[Gan94]{GanSU3}
Terry Gannon.
\newblock The classification of affine {${\rm SU}(3)$} modular invariant
  partition functions.
\newblock {\em Comm. Math. Phys.}, 161(2):233--263, 1994.

\bibitem[Gan23]{Gannon}
Terry Gannon.
\newblock Exotic quantum subgroups and extensions of affine {L}ie algebra
  {VOA}s -- part {I}, 2023.

\bibitem[GG02]{gannonCharge}
Matthias~R. Gaberdiel and Terry Gannon.
\newblock {Boundary states for WZW models}.
\newblock {\em Nucl. Phys. B}, 639:471--501, 2002.

\bibitem[GK92]{QG1}
Sergei Gelfand and David Kazhdan.
\newblock Examples of tensor categories.
\newblock {\em Invent. Math.}, 109(3):595--617, 1992.

\bibitem[GMP{\etalchar{+}}23]{EH}
Pinhas Grossman, Scott Morrison, David Penneys, Emily Peters, and Noah Snyder.
\newblock The extended {H}aagerup fusion categories.
\newblock {\em Ann. Sci. \'{E}c. Norm. Sup\'{e}r. (4)}, 56(2):589--664, 2023.

\bibitem[GS12]{HaagMod}
Pinhas Grossman and Noah Snyder.
\newblock Quantum subgroups of the {H}aagerup fusion categories.
\newblock {\em Comm. Math. Phys.}, 311(3):617--643, 2012.

\bibitem[GW86]{Wit}
Doron Gepner and Edward Witten.
\newblock String theory on group manifolds.
\newblock {\em Nuclear Phys. B}, 278(3):493--549, 1986.

\bibitem[HKL15]{HKL}
Yi-Zhi Huang, Alexander Kirillov, Jr., and James Lepowsky.
\newblock Braided tensor categories and extensions of vertex operator algebras.
\newblock {\em Comm. Math. Phys.}, 337(3):1143--1159, 2015.

\bibitem[Hua08]{hua}
Yi-Zhi Huang.
\newblock Rigidity and modularity of vertex tensor categories.
\newblock {\em Commun. Contemp. Math.}, 10:871--911, 2008.

\bibitem[Kac90]{KacBook}
Victor~G. Kac.
\newblock {\em Infinite-dimensional {L}ie algebras}.
\newblock Cambridge University Press, Cambridge, third edition, 1990.

\bibitem[KL93]{KL1}
D.~Kazhdan and G.~Lusztig.
\newblock Tensor structures arising from affine {L}ie algebras. {I}, {II}.
\newblock {\em J. Amer. Math. Soc.}, 6(4):905--947, 949--1011, 1993.

\bibitem[KL94a]{KL3}
D.~Kazhdan and G.~Lusztig.
\newblock Tensor structures arising from affine {L}ie algebras. {III}.
\newblock {\em J. Amer. Math. Soc.}, 7(2):335--381, 1994.

\bibitem[KL94b]{KL4}
D.~Kazhdan and G.~Lusztig.
\newblock Tensor structures arising from affine {L}ie algebras. {IV}.
\newblock {\em J. Amer. Math. Soc.}, 7(2):383--453, 1994.

\bibitem[KO02]{Kril}
Alexander Kirillov, Jr. and Viktor Ostrik.
\newblock On a {$q$}-analogue of the {M}c{K}ay correspondence and the {ADE}
  classification of {$\mathfrak{sl}_2$} conformal field theories.
\newblock {\em Adv. Math.}, 171(2):183--227, 2002.

\bibitem[KR08]{LagrangeGerman}
Liang Kong and Ingo Runkel.
\newblock Morita classes of algebras in modular tensor categories.
\newblock {\em Adv. Math.}, 219(5):1548--1576, 2008.

\bibitem[KW88]{KacBranch}
Victor~G. Kac and Minoru Wakimoto.
\newblock Modular and conformal invariance constraints in representation theory
  of affine algebras.
\newblock {\em Adv. in Math.}, 70(2):156--236, 1988.

\bibitem[Liu15]{LiuYB}
Zhengwei Liu.
\newblock Yang-baxter relation planar algebras, 2015.

\bibitem[LL95]{branchA}
F.~Levstein and J.~I. Liberati.
\newblock Branching rules for conformal embeddings.
\newblock {\em Comm. Math. Phys.}, 173(1):1--16, 1995.

\bibitem[Lon94]{LongSub}
Roberto Longo.
\newblock A duality for {H}opf algebras and for subfactors. {I}.
\newblock {\em Comm. Math. Phys.}, 159(1):133--150, 1994.

\bibitem[LR22]{LiuRing}
Zhengwei Liu and Christopher Ryba.
\newblock The {G}rothendieck ring of a family of spherical categories.
\newblock {\em Comm. Math. Phys.}, 396(1):315--348, 2022.

\bibitem[LS16]{A67}
Ching~Hung Lam and Hiroki Shimakura.
\newblock A holomorphic vertex operator algebra of central charge 24 whose
  weight one {L}ie algebra has type {$A_{6,7}$}.
\newblock {\em Lett. Math. Phys.}, 106(11):1575--1585, 2016.

\bibitem[McG22]{Emily}
Emily McGovern.
\newblock Module categories for ${A}_n$ web categories from
  $\tilde{{A}}_{n-1}$-buildings, 2022.

\bibitem[MS89]{ten1}
Gregory Moore and Nathan Seiberg.
\newblock Classical and quantum conformal field theory.
\newblock {\em Comm. Math. Phys.}, 123(2):177--254, 1989.

\bibitem[Mue00]{muger}
Michael Mueger.
\newblock Galois theory for braided tensor categories and the modular closure.
\newblock {\em Adv. Math.}, 150(2):151--201, 2000.

\bibitem[Ocn02]{Ocneanu}
Adrian Ocneanu.
\newblock The classification of subgroups of quantum {${\rm SU}(N)$}.
\newblock In {\em Quantum symmetries in theoretical physics and mathematics
  ({B}ariloche, 2000)}, volume 294 of {\em Contemp. Math.}, pages 133--159.
  Amer. Math. Soc., Providence, RI, 2002.

\bibitem[OS14]{LROst}
Victor Ostrik and Michael Sun.
\newblock Level-rank duality via tensor categories.
\newblock {\em Comm. Math. Phys.}, 326(1):49--61, 2014.

\bibitem[Ost03a]{OstMod}
Victor Ostrik.
\newblock Module categories, weak {H}opf algebras and modular invariants.
\newblock {\em Transform. Groups}, 8(2):177--206, 2003.

\bibitem[Ost03b]{vecG}
Viktor Ostrik.
\newblock Module categories over the {D}rinfeld double of a finite group.
\newblock {\em Int. Math. Res. Not.}, (27):1507--1520, 2003.

\bibitem[PZ02]{pet}
V.~B. Petkova and J.~B. Zuber.
\newblock {Boundary conditions in charge conjugate sl(N) WZW theories}.
\newblock 1 2002.

\bibitem[Qui99]{quinn}
Frank Quinn.
\newblock Group categories and their field theories.
\newblock In {\em Proceedings of the {K}irbyfest ({B}erkeley, {CA}, 1998)},
  volume~2 of {\em Geom. Topol. Monogr.}, pages 407--453. Geom. Topol. Publ.,
  Coventry, 1999.

\bibitem[Sch93]{Bert}
A.~N. Schellekens.
\newblock Meromorphic {$c=24$} conformal field theories.
\newblock {\em Comm. Math. Phys.}, 153(1):159--185, 1993.

\bibitem[Sch18]{LevelAndy}
Andrew Schopieray.
\newblock Level bounds for exceptional quantum subgroups in rank two.
\newblock {\em Internat. J. Math.}, 29(5):1850034, 33, 2018.

\bibitem[SW86]{Embeddings1}
A.~N. Schellekens and N.~P. Warner.
\newblock Conformal subalgebras of {K}ac-{M}oody algebras.
\newblock {\em Phys. Rev. D (3)}, 34(10):3092--3096, 1986.

\bibitem[SY89]{xratedproof}
A.~N. Schellekens and S.~Yankielowicz.
\newblock Modular invariants from simple currents. {A}n explicit proof.
\newblock {\em Phys. Lett. B}, 227(3-4):387--391, 1989.

\bibitem[SY90]{bert55}
A.~N. Schellekens and S.~Yankielowicz.
\newblock Field identification fixed points in the coset construction.
\newblock {\em Nuclear Phys. B}, 334(1):67--102, 1990.

\bibitem[Wen12]{HansMod}
Hans Wenzl.
\newblock Fusion symmetric spaces and subfactors.
\newblock {\em Pacific J. Math.}, 259(2):483--510, 2012.

\bibitem[Wen23]{HansSp}
Hans Wenzl.
\newblock On module categories related to {$Sp(N - 1) \subset Sl(N)$}.
\newblock {\em Pure Appl. Math. Q.}, 19(5):2573--2607, 2023.

\bibitem[Xu07]{MirrorXu}
Feng Xu.
\newblock Mirror extensions of local nets.
\newblock {\em Comm. Math. Phys.}, 270(3):835--847, 2007.

\end{thebibliography}
\bibliographystyle{alpha}
\end{document}